\theoremstyle{plain}
\newtheorem{thm}{Theorem}[section]
\newtheorem{pro}[thm]{Proposition}
\newtheorem{lem}[thm]{Lemma}
\newtheorem{cor}[thm]{Corollary}
\newtheorem*{GenNonvanCon}{Generalised Nonvanishing Conjecture}
\newtheorem*{WeakGenNonvanCon}{Weak Generalised Nonvanishing Conjecture}
\newtheorem{thmA}{Theorem}
\newtheorem{corA}[thmA]{Corollary}
\theoremstyle{definition}
\newtheorem{dfn}[thm]{Definition}
\newtheorem{rem}[thm]{Remark}
\theoremstyle{remark}
\newcommand{\Z}{\mathbb{Z}}
\newcommand{\N}{\mathbb{N}}
\newcommand{\R}{\mathbb{R}}
\newcommand{\Q}{\mathbb{Q}}
\newcommand{\OO}{\mathcal{O}}
\DeclareMathOperator{\codim}{codim}
\DeclareMathOperator{\Exc}{Exc}
\DeclareMathOperator{\mult}{mult}
\DeclareMathOperator{\Supp}{Supp}
\begin{document}
\title[On Generalised Abundance, II]{On Generalised Abundance, II}

\author{Vladimir Lazi\'c}
\address{Fachrichtung Mathematik, Campus, Geb\"aude E2.4, Universit\"at des Saarlandes, 66123 Saarbr\"ucken, Germany}
\email{lazic@math.uni-sb.de}

\author{Thomas Peternell}
\address{Mathematisches Institut, Universit\"at Bayreuth, 95440 Bayreuth, Germany}
\email{thomas.peternell@uni-bayreuth.de}

\thanks{
Lazi\'c was supported by the DFG-Emmy-Noether-Nachwuchsgruppe ``Gute Strukturen in der h\"oherdimensionalen birationalen Geometrie". Peternell was supported by the DFG grant ``Zur Positivit\"at in der komplexen Geometrie". We would like to thank S.\ Schreieder for asking a question about the paper \cite{LP18} at a seminar in Munich in December 2017 which motivated this paper, and to J.-P.\ Demailly for useful discussions.
\newline
\indent 2010 \emph{Mathematics Subject Classification}: 14E30.\newline
\indent \emph{Keywords}: abundance conjecture, Minimal Model Program.
}

\begin{abstract}
In our previous work, we introduced the Generalised Nonvanishing Conjecture, which generalises several central conjectures in algebraic geometry. In this paper, we derive some surprising nonvanishing results for pluricanonical bundles which were not predicted by the Minimal Model Program, by making progress towards the Generalised Nonvanishing Conjecture in every dimension. The main step is to establish that a somewhat stronger version of the Generalised Nonvanishing Conjecture holds almost always in the presence of metrics with generalised algebraic singularities, assuming the Minimal Model Program in lower dimensions.
\end{abstract}

\maketitle
\setcounter{tocdepth}{1}
\tableofcontents

\section{Introduction}

In this paper we study the Generalised Nonvanishing Conjecture in every dimension, and derive some surprising nonvanishing results for pluricanonical bundles which were not predicted by the Minimal Model Program.

\begin{GenNonvanCon}
Let $(X,\Delta)$ be a klt pair such that $K_X+\Delta$ is pseudoeffective. Let $L$ be a nef $\Q$-divisor on $X$. Then for every $t\geq0$ the numerical class of the divisor $K_X+\Delta+tL$ belongs to the effective cone.
\end{GenNonvanCon}

If the numerical class of a divisor belongs to the effective cone, then we call the divisor \emph{num-effective}, see Section \ref{sec:prelim}.
The Generalised Nonvanishing Conjecture as well as the Generalised Abundance Conjecture were introduced in our paper \cite{LP18a} as generalisations of several central conjectures in algebraic geometry. 

This conjecture contains as special cases the Nonvanishing Conjecture -- often viewed as part of the Abundance Conjecture -- and the nonvanishing part of the Semiampleness Conjecture (for nef line bundle on varieties with numerically trivial canonical class).

\subsection{Numerical versus linear equivalence} 

Constructing many non-tri\-vi\-al effective or basepoint free divisors on a projective variety is one of the main goals of the Minimal Model Program. Assume, for instance, that $(X,\Delta)$ is a projective klt pair such that $K_X+\Delta$ is semiample. It is a basic question to determine which $\Q$-divisors $D$ in the numerical class of $K_X+\Delta$ are effective (that is, $\kappa(X,D)\geq0$) or semiample. If the numerical and $\Q$-linear equivalence on $X$ coincide, i.e.\ if $H^1(X,\OO_X) = 0 $, then \emph{all} such $D$ are trivially effective, respectively semiample, and one would expect that this is the only case where such behaviour happens.

Our first main result is that, surprisingly, such behaviour happens \emph{almost always}. More precisely, whether each such $D$ is effective, respectively semiample, depends not (only) on the vanishing of the cohomology group $H^1(X,\OO_X)$, but on the behaviour of the Euler-Poincar\'e characteristic of the structure sheaf of $X$. 

We obtain the following unconditional results in dimensions at most $3$, which were previously not even conjectured.

\begin{thmA}\label{cor:F}
Let $(X,\Delta)$ be a projective klt pair of dimension at most $3$ such that $K_X+\Delta$ is pseudoeffective. Assume that $\chi(X,\OO_X)\neq0$.
\begin{enumerate}
\item[(i)] Then for every $\Q$-divisor $G$ with $K_X+\Delta\equiv G$ we have $\kappa(X,G)\geq0$. 
\item[(ii)] If $K_X+\Delta$ is semiample, then every $\Q$-divisor $G$ with $K_X+\Delta\equiv G$ is semiample.
\end{enumerate}
\end{thmA}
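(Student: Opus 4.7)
Denote $N:=G-(K_X+\Delta)$, a numerically trivial $\Q$-divisor; parts (i) and (ii) then amount to the assertions that $K_X+\Delta+N$ is, respectively, effective and semiample.

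The first step is a reduction to the semiample case. In dimension at most three the MMP for klt pairs holds unconditionally, and since $G\equiv K_X+\Delta$, any $(K_X+\Delta)$-MMP is simultaneously a $G$-MMP (the same extremal rays are negative). Moreover $\chi(X,\mathcal{O}_X)$ is invariant under crepant birational maps of klt pairs, so the hypothesis persists. Abundance in dimension at most three then yields that $K_X+\Delta$ is semiample on the output, so for (i) one may assume $K_X+\Delta$ is semiample from the start, with Iitaka fibration $f\colon X\to Y$ and $K_X+\Delta\sim_{\Q}f^*A$ for $A$ ample on $Y$; for (ii) this is the given setup.

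The central step is to apply the paper's main Generalised Nonvanishing theorem to $(X,\Delta)$ with the nef classes $L_\varepsilon:=N+\varepsilon H$ (with $H$ ample and $\varepsilon>0$ rational), using that the semiampleness of $K_X+\Delta$ provides the required metric of generalised algebraic singularities. For each $\varepsilon$ one obtains effectivity of $K_X+\Delta+N+\varepsilon H$. The Euler-characteristic hypothesis $\chi(X,\mathcal{O}_X)\neq 0$ enters at the limit $\varepsilon\to 0$, combining with generic vanishing (Green-Lazarsfeld-Simpson-Hacon) to rule out the abelian-variety obstruction under which numerically trivial twists could fail to have sections; the upshot is effectivity of $K_X+\Delta+N$, proving (i). For (ii), restrict $N$ to a general fiber $F$ of $f$: the pair $(F,\Delta|_F)$ is klt Calabi-Yau with $\chi(F,\mathcal{O}_F)\neq 0$ (derived from $\chi(X,\mathcal{O}_X)\neq 0$ via the canonical bundle formula for $f$), so applying (i) fiberwise forces $N|_F$ to be torsion. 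A relative semiampleness argument via Ambro's canonical bundle formula or Fujino's base-point-free technique then promotes this to global semiampleness of $G$.

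The decisive difficulty is the limit $\varepsilon\to 0$ in the nonvanishing step: num-effectivity for every ample perturbation of $N$ must be upgraded to genuine effectivity of $K_X+\Delta+N$ itself, which is not automatic since $h^0$ is not continuous in the linear-equivalence class. It is precisely here that the hypothesis $\chi(X,\mathcal{O}_X)\neq 0$ is decisively used, excluding the single structural exception (abelian-variety factors in the Iitaka fibration) where the conclusion would genuinely fail.
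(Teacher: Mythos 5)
Your proposal misreads how the hypothesis $\chi(X,\OO_X)\neq 0$ enters, and the central step is vacuous as written. If $H$ is ample and $N\equiv 0$, then $K_X+\Delta+N+\varepsilon H$ is numerically equal to $K_X+\Delta+\varepsilon H$, hence big for every $\varepsilon>0$, so the effectivity of each perturbation is completely trivial and carries no information; the entire content of (i) is in the limit $\varepsilon\to 0$, which you acknowledge but do not prove. The paper does not take any limit and does not invoke generic vanishing. It applies the nonvanishing criterion (Theorem~\ref{thm:nonvanishingForms}, via Theorem~\ref{thm:C}/Corollary~\ref{cor:numequiv}) directly with the numerically trivial class $L := G-(K_X+\Delta)$. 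There, the hypothesis $\chi\neq0$ is used inside a contradiction argument: assuming $\kappa(X,K_X+\Delta+L)=-\infty$, one applies the criterion to conclude $H^0(Y,\Omega^p_Y\otimes\OO_Y(m\pi^*M))=0$ for all $p$ and large $m$, then the Demailly--Peternell--Schneider hard Lefschetz theorem (Theorem~\ref{thm:DPS}) forces $H^p(Y,\OO_Y(K_Y+m\pi^*M)\otimes\mathcal I(h^{\otimes m}))=0$, and after Serre duality the Euler characteristic of the appropriate structure sheaf is identically zero, so $\chi(X,\OO_X)=0$. This is a completely different mechanism from ``ruling out abelian-variety factors by generic vanishing.''

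The treatment of part (ii) also has a genuine error. You claim $\chi(F,\OO_F)\neq 0$ for a general fibre $F$ of the Iitaka fibration, ``derived from $\chi(X,\OO_X)\neq 0$ via the canonical bundle formula.'' This implication is false: a rational elliptic surface $S$ has $\chi(S,\OO_S)=1$ while every fibre $F$ is an elliptic curve with $\chi(F,\OO_F)=0$. ($K_S$ here is anti-effective, not pseudoeffective, but the same phenomenon persists for, e.g., isotrivial elliptic fibrations over higher-genus curves with $K$ semiample, showing that $\chi$ of the total space in no way controls $\chi$ of a general fibre.) Consequently, ``applying (i) fiberwise'' does not force $N|_F$ to be torsion. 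The paper instead proves (ii) via Theorem~\ref{thm:semipositive1} and its precursor Theorem~\ref{thm:semipositive}(ii): one obtains effectivity of both $K_X+\Delta$ and $K_X+\Delta+\varepsilon G_0$ (for a suitable small effective representative $G_0$), then invokes the Gongyo--Matsumura/Guan--Matsumura semiampleness criterion for singular metrics with vanishing Lelong numbers, and finally applies the Rigidity Lemma to factor the Iitaka fibrations against each other and push $G$ down as a pullback of an ample divisor. No fibrewise Euler-characteristic argument appears. A smaller point: your reduction ``any $(K_X+\Delta)$-MMP is simultaneously a $G$-MMP'' is true but needs the right justification: $N\equiv 0$ forces every contracted extremal ray $R$ to satisfy $N\cdot R=0$, so by \cite[Theorem~3.7(4)]{KM98} the divisor $N$ descends $\Q$-linearly (not merely numerically) along each MMP step, which is what is needed to transfer $\kappa$ between $X$ and its minimal model.
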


Theorem \ref{cor:F} is a special case of Corollaries \ref{cor:surfaces} and \ref{cor:threefolds} below.

Another instance where our results hold unconditionally \emph{in every dimension} is the case where the numerical dimension is $1$; this is the first highly nontrivial case beyond those when the numerical dimension is $0$ or maximal. 

\begin{thmA} \label{thm:G} 
Let $(X,\Delta)$ be a projective terminal pair of dimension $n$ such that $K_X + \Delta$ is nef and let $L$ be a nef $\Q$-Cartier divisor on $X$. Assume that $\nu(X,K_X+\Delta+L) = 1$ and $\chi(X,\OO_X) \neq 0$. 

Then $K_X+\Delta+tL$ is num-effective for all $t\geq0$.
\end{thmA}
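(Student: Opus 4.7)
The plan is to establish, for each $t \ge 0$, that the nef $\Q$-divisor $D_t := K_X + \Delta + tL$ is num-effective, by analysing the Euler characteristic $\chi(X, m D_t)$ via Riemann--Roch and then converting nonvanishing of $\chi$ into nonvanishing of $h^0$.

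First I show that $\nu(D_t) \le 1$ for every $t \ge 0$. Since $\nu(K_X+\Delta+L) = 1$, we have $(K_X+\Delta+L)^2 \cdot H^{n-2} = 0$ for every ample class $H$; expanding, each pairwise intersection of nef classes with $H^{n-2}$ is nonnegative, and hence all three pieces $(K_X+\Delta)^2 \cdot H^{n-2}$, $(K_X+\Delta) \cdot L \cdot H^{n-2}$, $L^2 \cdot H^{n-2}$ vanish. By linearity this forces $D_t^2 \cdot H^{n-2} = 0$ for every $t \ge 0$. If $\nu(D_t) = 0$ then $D_t \equiv 0$ and the claim is trivial; otherwise $\nu(D_t) = 1$, and it is standard that all higher products $D_t^k$ with $k \ge 2$ are then numerically trivial in $N^k(X)_\R$.

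Next I pass to a resolution of singularities (terminal singularities are rational, so $\chi(X, \OO_X)$ is preserved) and apply Hirzebruch--Riemann--Roch. The vanishing of $D_t^k$ for $k \ge 2$ collapses all but the first two terms of the Chern character, leaving the affine function
\[
\chi(X, m D_t) \;=\; \chi(X, \OO_X) \;+\; m \cdot \bigl( D_t \cdot \mathrm{td}_{n-1}(X) \bigr).
\]
Since $\chi(X, \OO_X) \neq 0$, this vanishes for at most one positive integer $m$, so $\chi(X, m D_t) \neq 0$ for all but at most one $m$.

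The decisive step is to upgrade $\chi(X, m D_t) \neq 0$ into $h^0(X, m D_t) > 0$ for some $m$. Arguing by contradiction, if $D_t$ is not num-effective then $h^0(X, m D_t) = 0$ for every $m \ge 1$, since an effective $m D_t$ would place $D_t$ in the effective cone. Writing $m D_t = K_X + \Delta + N_m$ with $N_m := (m-1)(K_X+\Delta) + m t L$ nef of numerical dimension at most $1$, a Kawamata-type vanishing for klt pairs yields $H^n(X, m D_t) = 0$, so the whole nonzero affine quantity above is absorbed into the intermediate groups $H^i(X, m D_t)$ for $1 \le i \le n-1$. To rule this out I would combine the rigidity imposed by numerical dimension one, the constraint that $\chi(X, \OO_X) \neq 0$ places on the Albanese map via Green--Lazarsfeld-type generic vanishing (forbidding surjections onto abelian factors that could absorb the linear growth), and the analytic input flagged in the paper's abstract, namely singular metrics on $D_t$ with generalised algebraic singularities together with Nadel-type vanishing for the associated multiplier ideals. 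The main obstacle lies precisely here: controlling intermediate cohomology uniformly in $m$ without invoking abundance for $K_X + \Delta + L$, and this is the step in which I expect the paper's reliance on the Minimal Model Program in lower dimensions to enter decisively.
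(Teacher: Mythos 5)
Your opening reduction is correct: since $K_X+\Delta$ and $L$ are both nef and $\nu(X,K_X+\Delta+L)=1$, the computation $(K_X+\Delta+L)^2\cdot H^{n-2}=0$ forces all three homogeneous degree-$2$ pieces to vanish (by Teissier--Khovanskii nonnegativity), hence $\nu(X,D_t)\leq 1$ for every $t\geq 0$; and then HRR collapses to an affine polynomial in $m$, so that $\chi(X,mD_t)\neq 0$ for all but at most one $m$. That much is fine and in the same spirit as part of the paper's argument.

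However, the decisive step — upgrading $\chi\neq 0$ to $h^0>0$ — is precisely where your proof has a genuine gap, and your diagnosis of what fills it is off. You suggest the missing ingredient is abundance/MMP in lower dimensions, but Theorem~\ref{thm:G} is unconditional in the paper. What actually rules out the intermediate cohomology $H^i$, $1\leq i\leq n-1$, is the combination of two tools that do not appear in your write-up at all: (a) the hard Lefschetz theorem with multiplier ideals (Theorem~\ref{thm:DPS}, from \cite{DPS01}), which gives a surjection $H^0\big(Y,\Omega^{n-q}_Y\otimes\pi^*\OO_X(mM)\otimes\mathcal I(h^{\otimes m})\big)\twoheadrightarrow H^q\big(Y,\OO_Y(K_Y+m\pi^*M)\otimes\mathcal I(h^{\otimes m})\big)$; and (b) the criterion (Theorem~\ref{thm:nu=1}, built on the birational stability of the cotangent bundle from \cite{CP15}) which says that, assuming $\kappa(X,M)=-\infty$, one has $H^0\big(Y,\Omega^p_Y\otimes\pi^*\OO_X(mM)\big)=0$ for all $p$ and $m\gg 0$. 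Chaining these with Serre duality yields $\chi\big(Y,\OO_Y(\sum\lfloor m\lambda_j\rfloor D_j-m\pi^*M)\big)=0$ for all $m\gg 0$ (and indeed for all $m$ by polynomiality), contradicting $\chi(X,\OO_X)\neq 0$. Without ingredients (a) and (b), there is no way to eliminate the intermediate cohomology groups, and the generic vanishing / rigidity ideas you gesture at do not suffice here. You also run HRR on $mD_t$ directly, which implicitly assumes you are working with the untwisted line bundle; the paper's argument needs the twist by $\mathcal I(h^{\otimes m})$ (equivalently $\OO_Y(-\sum\lfloor m\lambda_j\rfloor D_j)$), and the case where this twist is nontrivial is handled separately by citing \cite[Theorem~6.5]{LP18}. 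In short: the $\chi$-arithmetic you set up is part of the skeleton, but the core machinery (the twisted-forms criterion and the DPS hard Lefschetz) is absent.
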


This is proved in Section \ref{sec:num1}.

As an illustration, consider a terminal Calabi-Yau pair $(X,\Delta)$, i.e.\ a terminal pair such that $K_X + \Delta \equiv 0$. Let $L$ be a nef divisor on $X$ of numerical dimension $1$. If $\chi(X,\OO_X) \neq 0$, then $L$ is num-effective.

\subsection{Metrics with algebraic singularities} 
Theorem \ref{cor:F} above is a special instance of more general results which work in all dimensions but require inductive hypotheses. 

Let $(X,\Delta)$ be a klt pair such that $K_X+\Delta$ is nef and $X$ is not uniruled, and let $L$ be a nef $\Q$-divisor on $X$. Our main technical result, Theorem \ref{thm:nonvanishingForms} below, gives a general criterion for the existence of sections of some multiple of $K_X+\Delta+L$. The criterion generalises the main results of \cite{LP18}: the proof uses the birational stability of the cotangent bundle from \cite{CP15} (this is where the non-uniruledness of $X$ is necessary) together with the very carefully chosen MMP techniques in Theorem \ref{thm:MMPtwist} (this is where the nefness of $K_X+\Delta$ is used). 

We then apply Theorem \ref{thm:nonvanishingForms} to line bundles possessing metrics with \emph{generalised algebraic singularities}; the importance of this class of metrics is explained in Section \ref{sec:prelim}. We expect these methods to be crucial in the resolution of the Generalised Nonvanishing Conjecture. 

In Section \ref{sec:algsing2} we prove our central inductive statement.

\begin{thmA}\label{thm:C}
Assume the termination of flips in dimensions at most $n-1$, and the Abundance Conjecture in dimensions at most $n-1$.

Let $(X,\Delta)$ be a projective klt pair of dimension $n$ such that $K_X+\Delta$ is pseudoeffective, and let $L$ be a nef $\Q$-divisor on $X$. Suppose that $K_X+\Delta$ and $K_X+\Delta+L$ have singular metrics with generalised algebraic singularities and semipositive curvature currents, and that $\chi(X,\OO_X)\neq0$.

\begin{enumerate}
\item[(i)] Then for every $\Q$-divisor $L'$ with $L\equiv L'$ there exists a positive rational number $t_0$ such that
$$\kappa(X,K_X+\Delta+tL')\geq0\quad\text{for every }0\leq t\leq t_0.$$
\item[(ii)] Assume additionally the semiampleness part of the Abundance Conjecture in dimension $n$. Then 
$$K_X+\Delta+tL\quad\text{is num-effective for every }t\geq0.$$ 
\end{enumerate}
\end{thmA}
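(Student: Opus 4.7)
The plan is to derive (i) by reducing, via the MMP tools of Theorem \ref{thm:MMPtwist}, to the hypotheses of Theorem \ref{thm:nonvanishingForms}, and then to deduce (ii) from (i) using the extra semiampleness input.

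For (i), I first observe that for any $t\in[0,1]$ the class $K_X+\Delta+tL$ automatically carries a singular metric with generalised algebraic singularities and semipositive curvature current, obtained as a convex combination of the local potentials of the metrics on $K_X+\Delta$ and $K_X+\Delta+L$. Thus the metric hypothesis propagates across the whole interval. To apply Theorem \ref{thm:nonvanishingForms}, I need to arrive at a klt model with $K_X+\Delta$ nef and the ambient variety not uniruled. Using Theorem \ref{thm:MMPtwist} together with the termination and abundance assumptions in dimensions $\leq n-1$, I would run a carefully chosen MMP to produce a klt birational model $(X',\Delta')$ with $K_{X'}+\Delta'$ nef, arranged so that both $K_{X'}+\Delta'$ and $K_{X'}+\Delta'+L'$ retain metrics with generalised algebraic singularities (by pull-push of the given metrics), and with $\chi(X',\OO_{X'})\neq 0$, which is a birational invariant among klt varieties. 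If $X'$ turns out to be uniruled, an MRC-type fibration combined with abundance in lower dimensions descends the problem to a non-uniruled base. On the resulting model, Theorem \ref{thm:nonvanishingForms} applies and produces a nonzero section of some multiple of $K_{X'}+\Delta'+L'$.

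The hypothesis $\chi(X,\OO_X)\neq 0$ enters decisively at the final step: it upgrades the existence of one effective $\Q$-divisor in the class of $K_X+\Delta+tL$ to the statement that \emph{every} $L'\equiv L$ satisfies $\kappa(X,K_X+\Delta+tL')\geq 0$ for $t$ in a small interval $[0,t_0]$. As emphasised in the introduction, the obstruction to this passage is controlled not only by $H^1(X,\OO_X)$ but more generally by $\chi(\OO_X)$, whose nonvanishing suffices. For (ii), the additional semiampleness of $K_X+\Delta$ yields a contraction $f\colon X\to Y$ with $K_X+\Delta=f^*A$ for an ample $\Q$-Cartier $A$; combining (i) with a scaling argument that replaces $L$ by $sL$ for arbitrary $s>0$ then extends num-effectivity of $K_X+\Delta+tL$ from $t\in[0,t_0]$ to all $t\geq 0$, since the metric hypothesis and the MMP inputs are preserved under such scaling.

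The hardest step will be the careful preservation of the generalised-algebraic-singularities condition through the MMP and, if needed, through the MRC reduction. While this class of singular metrics is designed to behave well under pullback, tracking the singular currents through a sequence of flips and divisorial contractions in a way precisely compatible with the MMP provided by Theorem \ref{thm:MMPtwist} requires an explicit bookkeeping. A close second difficulty is the conceptually novel role of $\chi\neq 0$: its transmission through the MMP is routine (it is a klt birational invariant), but its function in upgrading one effective representative of a numerical class to all representatives is the crucial new mechanism that must be implemented with care.
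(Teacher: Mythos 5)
Your high-level template — reduce via an MMP to a minimal model, apply the nonvanishing criterion for sections of twisted differential forms, and let $\chi(X,\OO_X)\neq 0$ enter at the last step — is the right shape, but several of the pivoting moves are not the ones the paper makes, and one of them as stated has a genuine gap.

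First, the uniruled case is not handled by an MRC fibration. After reducing (as in Step 3 of Theorem~\ref{thm:induction2}, with Corollary~\ref{cor:algsinMMP} keeping track of the metrics) to a $\Q$-factorial terminal model with $K_X+\Delta$ nef, the paper splits on whether $K_X$ is pseudoeffective. If it is, one can invoke Theorem~\ref{thm:nv0} directly. If it is not (the uniruled case), the paper invokes Theorem~\ref{thm:tau<1algsings}, which in turn implements the pseudoeffectivity-threshold construction of Lemma~\ref{lem:Knotpsef}: find $\tau$ with $K_X+\tau\Delta$ on the boundary of the pseudoeffective cone, use \cite[Theorem 3.3]{DL15} to produce an effective $D$ with $K_X+\Delta\sim_\Q D$ and $\Supp\Delta\subseteq\Supp D$, and then feed this into Theorem~\ref{thm:Supportalgsings}, which works via a carefully constructed cyclic covering. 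An MRC reduction is not part of the argument, and it is not clear how you would make it compatible with preserving both the metric hypothesis and the Euler characteristic; the terminalisation-plus-threshold route is precisely what makes this work.

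Second, and more seriously, your argument for (ii) does not go through as written. You claim that replacing $L$ by $sL$ for arbitrary $s>0$ preserves the metric hypotheses, but the convex-combination trick you correctly use in (i) only produces metrics with generalised algebraic singularities on $K_X+\Delta+tL$ for $t\in[0,1]$; there is no metric with generalised algebraic singularities on $K_X+\Delta+sL$ for $s>1$ coming for free from the hypotheses. The paper's route to (ii) is entirely different: when $n(X,K_X+\Delta+L)<n$ it appeals to Theorem~\ref{thm:B}, and when $n(X,K_X+\Delta+L)=n$ it follows Step~2 of Theorem~\ref{thm:induction2}, showing that the maximal nef dimension together with semiampleness produced by the additional hypothesis forces $K_X+\Delta+\varepsilon N$ to be big, and hence $K_X+\Delta+tL$ to be big for all $t>0$. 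A scaling argument cannot replace this bigness mechanism.

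Finally, two smaller points. The MMP-running tool is Proposition~\ref{pro:contocon} together with Lemma~\ref{lem:algsing} (which supplies $\kappa(X,K_X+\Delta)\geq 0$ and termination of flips of the given pair), not Theorem~\ref{thm:MMPtwist}; the latter is an internal ingredient of the proof of Theorem~\ref{thm:nonvanishingForms}, not a device for arriving at minimal models. And the role of $\chi(X,\OO_X)\neq 0$ is not an ``upgrade from one effective representative to all'': it is the contradiction endpoint in Theorem~\ref{thm:nv0}, where a hypothetical $\kappa=-\infty$ plus the hard Lefschetz theorem (Theorem~\ref{thm:DPS}) and Serre duality force the full asymptotic Euler characteristic of a twisted sheaf, hence $\chi(\OO_X)$ itself, to vanish. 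The passage to arbitrary $L'\equiv L$ is automatic because the metric hypothesis depends only on the numerical class.
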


With notation from Theorem \ref{thm:C}, we note that by \cite[Theorem A]{LP18a}, all divisors of the form $K_X+\Delta+L$ should possess singular metrics with generalised algebraic singularities and semipositive curvature currents.

\medskip

We mention a special case, where the conclusion of Theorem \ref{thm:C} is much stronger: when $K_X+\Delta$ and $K_X+\Delta+L$ additionally have  smooth metrics with semipositive curvature, or, more generally, singular metrics with semipositive curvature 
currents and vanishing Lelong numbers, then $K_X+\Delta+L$ is actually semiample. This is Theorem \ref{thm:semipositive}, and the proof uses crucially the main result of \cite{GM17}.

As a corollary of Theorem \ref{thm:C}, we obtain the following:

\begin{corA}\label{cor:D}
Let $(X,\Delta)$ be a projective klt pair of dimension $4$ such that $K_X+\Delta$ is pseudoeffective and let $L$ be a nef $\Q$-divisor on $X$. Suppose that $K_X+\Delta$ and $K_X+\Delta+L$ have singular metrics with generalised algebraic singularities and semipositive curvature currents, and that $\chi(X,\OO_X)\neq0$.

Then there exists a positive rational number $t_0$ such that 
$$\kappa(X,K_X+\Delta+tL)\geq0\quad\text{for every }0\leq t\leq t_0.$$
\end{corA}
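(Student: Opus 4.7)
The plan is to deduce Corollary \ref{cor:D} as a direct specialisation of Theorem \ref{thm:C}(i) to $n=4$. So I only have to verify the inductive hypotheses of Theorem \ref{thm:C} in this dimension and then apply it with the choice $L'=L$.

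First I would recall that the termination of flips in dimensions at most $3$ is classical: it follows in dimension $2$ from the fact that no flips occur, and in dimension $3$ from Shokurov's theorem on termination of terminal threefold flips together with the reduction from klt to terminal. Second, the Abundance Conjecture in dimensions at most $3$ is known unconditionally for klt (and indeed lc) pairs, by the work of Miyaoka, Kawamata, Keel-Matsuki-McKernan and others in dimension $3$, and by classical surface theory in dimensions $\leq 2$. Thus both inductive assumptions of Theorem \ref{thm:C} are satisfied when $n=4$.

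With these assumptions verified, the remaining hypotheses of Theorem \ref{thm:C} are exactly the hypotheses of Corollary \ref{cor:D}: namely that $(X,\Delta)$ is a projective klt pair, $K_X+\Delta$ is pseudoeffective, $L$ is a nef $\Q$-divisor, both $K_X+\Delta$ and $K_X+\Delta+L$ carry singular metrics with generalised algebraic singularities and semipositive curvature currents, and $\chi(X,\OO_X)\neq 0$. Applying Theorem \ref{thm:C}(i) with the specific choice $L'=L$ produces a positive rational number $t_0$ such that $\kappa(X,K_X+\Delta+tL)\geq 0$ for every $0\leq t\leq t_0$, which is precisely the claim.

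Since this deduction is essentially a bookkeeping matter, there is no substantive obstacle here; the entire difficulty is concentrated in Theorem \ref{thm:C} itself. The only point where one has to be careful is to check that the statement of Theorem \ref{thm:C}(i) allows the choice $L'=L$ without any additional genericity, which it does, as the quantifier is over \emph{every} $\Q$-divisor $L'$ numerically equivalent to $L$.
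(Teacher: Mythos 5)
Your proposal is correct and is exactly how the paper treats Corollary \ref{cor:D}: the paper states it is an immediate consequence of Theorem \ref{thm:C}, and the only inputs needed are that termination of flips and the Abundance Conjecture are known in dimensions at most $3$, after which one applies Theorem \ref{thm:C}(i) with $L'=L$.
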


Theorem \ref{thm:C} has an unexpected consequence for the Minimal Model Program in terms of the existence of sections of divisors numerically equivalent to adjoint divisors, generalising Theorem \ref{cor:F} to every dimension. The following result is a special case of Corollary \ref{cor:numequiv} and Theorem \ref{thm:semipositive1}.

\begin{thmA}\label{thm:E}
Assume the termination of flips in dimensions at most $n-1$, and the existence of good models in dimensions at most $n$.

Let $(X,\Delta)$ be a projective klt pair of dimension $n$ such that $K_X+\Delta$ is pseudoeffective. Assume that $\chi(X,\OO_X)\neq0$.
\begin{enumerate}
\item[(i)] Then for every $\Q$-divisor $G$ with $K_X+\Delta\equiv G$ we have $\kappa(X,G)\geq0$. 
\item[(ii)] If $K_X+\Delta$ is semiample, then every $\Q$-divisor $G$ with $K_X+\Delta\equiv G$ is semiample.
\end{enumerate}
\end{thmA}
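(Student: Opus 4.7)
The plan is to reduce to the case where $K_X+\Delta$ is semiample and then invoke Theorem \ref{thm:C}. By the assumed existence of good models in dimension $n$, the pair $(X,\Delta)$ admits a good minimal model $(X',\Delta')$ via a birational contraction $\phi \colon X \dashto X'$ composed of MMP steps, and $K_{X'}+\Delta'$ is semiample on $X'$. Since klt singularities are rational, the Euler characteristic is preserved, so $\chi(X',\OO_{X'}) = \chi(X,\OO_X) \neq 0$. Semiampleness of $K_{X'}+\Delta'$ yields a morphism $g\colon X' \to Z$ with $m(K_{X'}+\Delta') \sim g^*H$ for some ample Cartier divisor $H$ on $Z$; pulling back the Fubini--Study metric endows $K_{X'}+\Delta'$ with a smooth semipositive metric, which in particular has generalised algebraic singularities.

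With this setup, the hypotheses of Theorem \ref{thm:C} hold on $(X',\Delta')$ with $L=0$: termination of flips and abundance in dimension $\leq n-1$ are assumed, and the semiampleness part of abundance in dimension $n$ is precisely what we have just established for $(X',\Delta')$. Theorem \ref{thm:C}(i) then provides, for every $\Q$-divisor $L' \equiv 0$ on $X'$, a positive rational $t_0$ with $\kappa(X', K_{X'}+\Delta' + tL') \geq 0$ for $t \in [0,t_0]$. For part (i) of Theorem \ref{thm:E}, one needs to upgrade this range-statement to $\kappa(X', G') \geq 0$ for every specific $G' \equiv K_{X'}+\Delta'$; this is the content of Corollary \ref{cor:numequiv}. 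Once effectivity is established on $X'$, it transfers back to $X$: since $\phi$ is a $(K_X+\Delta)$-non-positive birational contraction, the negativity lemma compares $G$ on $X$ with $\phi_*G$ on $X'$, and a section of some multiple of $\phi_*G$ pulls back to a section of the same multiple of $G$, giving $\kappa(X,G) \geq 0$.

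For part (ii), no reduction is needed: $K_X+\Delta$ is already semiample on $X$ itself, so $X$ carries a smooth semipositive metric on $K_X+\Delta$. We then invoke Theorem \ref{thm:semipositive1}, which strengthens the effectivity conclusion to full semiampleness of $G$ by exploiting the fact that smooth semipositive metrics have vanishing Lelong numbers, combined with the main result of \cite{GM17} as already announced in the text after Theorem \ref{thm:C}.

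The main obstacle I anticipate is bridging the range-dependent statement of Theorem \ref{thm:C}(i) to a single-divisor effectivity statement in part (i). A priori, the constant $t_0$ may depend on $L'$ in an uncontrolled way, and naive rescaling does not help since rescaling $L'$ also affects $t_0$. I expect that Corollary \ref{cor:numequiv} resolves this using compactness of $\Pic^\tau(X')$ --- numerically trivial line bundles form a bounded family --- permitting either a uniform choice of $t_0$ or a semicontinuity argument that propagates effectivity from a single divisor to the whole numerical class.
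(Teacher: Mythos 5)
Your identification of Corollary \ref{cor:numequiv} and Theorem \ref{thm:semipositive1} as the relevant results is exactly right: the paper treats Theorem \ref{thm:E} as a special case of precisely those two statements, and your verification of their hypotheses from the existence of good models is sound. Two remarks. First, your ``main obstacle'' is not an obstacle at all once one looks at how Corollary \ref{cor:numequiv} is actually proved: the mechanism is \emph{not} compactness of $\Pic^\tau$ or any semicontinuity argument. Rather, Theorem \ref{thm:C} provides effectivity for a single small $t_0>0$, one then runs an $L$-trivial $(K_X+\Delta)$-MMP to a minimal model, invokes the semiampleness part of abundance in dimension $n$ to make both $K_Y+\Delta_Y$ and $K_Y+\Delta_Y+\varepsilon G$ semiample (for $G$ coming from the $t_0$-effectivity), and then the Rigidity-lemma comparison of the two Iitaka fibrations, as in the proof of Theorem \ref{thm:semipositive}(ii), propagates semiampleness (hence effectivity) to every $t\geq0$. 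This is algebraic, not a boundedness argument. Second, for part (ii), Theorem \ref{thm:semipositive1} is stated for $\Q$-factorial $X$, whereas Theorem \ref{thm:E} has no such hypothesis; you should first pass to a small $\Q$-factorialisation $\pi\colon\widetilde X\to X$, which is crepant and preserves pseudoeffectivity, semiampleness and $\chi(\cdot,\OO)$, and note at the end that semiampleness of $\pi^*G$ descends to $G$ by the Rigidity lemma since $\pi^*G$ is $\pi$-trivial. With those adjustments your argument matches the paper's.
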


\subsection{The Weak Generalised Nonvanishing Conjecture}

As an important part of this paper on which the previous results depend, we show that the assumptions of the Generalised Nonvanishing Conjecture can be weakened considerably.  In fact, we reduce the Generalised Nonvanishing Conjecture to the following.

\begin{WeakGenNonvanCon} 
Let $(X,\Delta)$ be a $\Q$-fac\-to\-ri\-al projective klt pair of dimension $n$ such that $X$ is not uniruled and $K_X + \Delta $ is nef. Let $L$ be a nef $\Q$-divisor on $X$ such that $n(X,K_X + \Delta + L) = n$. Then for every $t\geq0$ the numerical class of the divisor $K_X+\Delta+tL$ belongs to the effective cone.
\end{WeakGenNonvanCon}

Here, for any $\Q$-Cartier divisor $D$ on $X$, we denote by $n(X,D)$ the nef dimension of $D$, see Section \ref{sec:prelim}. The condition $n(X,D) = \dim X $ is equivalent to saying that $X$ cannot be covered by curves $C$ such that $D \cdot C = 0$. Conjecturally, when $n(X,K_X + \Delta + L) = \dim X$ as above, then $K_X+\Delta+L$ is big. 

This conjecture is weaker than the Generalised Nonvanishing Conjecture in three different ways: the variety $X$ is not uniruled; the pair $(X,\Delta)$ is a minimal model; and the nef dimension of $K_X+\Delta+L$ is maximal; see Section \ref{sec:prelim} for details.

\medskip

In Section \ref{sec:reductions} we prove:

\begin{thmA}\label{thm:mainreduction}
Assume the termination of klt flips in dimensions at most $n-1$, the Abundance Conjecture for klt pairs in dimensions at most $n-1$, the existence of minimal models of klt pairs in dimension $n$, and the Weak Generalised Nonvanishing Conjecture in dimensions at most $n$.

Let $(X,\Delta)$ be an $n$-dimensional klt pair such that $K_X+\Delta$ is pseudoeffective and let $L$ be a nef $\Q$-divisor on $X$. 
\begin{enumerate}
\item[(i)] Then there exists a positive rational number $t_0$ such that $K_X+\Delta+tL$ is num-effective for every $0\leq t\leq t_0$.
\item[(ii)] Assume additionally the semiampleness part of the Abundance Conjecture in dimension $n$. Then $K_X+\Delta+tL$ is num-effective for every $t\geq0$, that is, the Generalised Nonvanishing Conjecture holds in dimension $n$. 
\end{enumerate}
\end{thmA}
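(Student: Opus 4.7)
The strategy is to derive the Generalised Nonvanishing Conjecture in dimension $n$ from its Weak counterpart in dimensions $\leq n$ by a three-step reduction that removes successively the three additional hypotheses of the Weak version: that $(X,\Delta)$ is a minimal model, that $X$ is not uniruled, and that the nef dimension $n(X,K_X+\Delta+L)$ equals $n$. Each step is either a controlled MMP or a descent of the problem to a lower-dimensional base, fed by assumptions (a), (b) and (d) in lower dimensions, with the final step being an invocation of the Weak conjecture in dimension $n$.

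For the first step, I would use assumption (c) together with the twisted MMP of Theorem \ref{thm:MMPtwist} to run a $(K_X+\Delta)$-MMP that preserves the nefness of $L$ along the way. This yields a birational map $\varphi:(X,\Delta)\dashrightarrow(X',\Delta')$ with $(X',\Delta')$ a $\Q$-factorial klt pair, $K_{X'}+\Delta'$ nef, and $L':=\varphi_*L$ nef. The negativity-of-contraction lemma then reduces the question on $(X,\Delta,L)$ to the same question on $(X',\Delta',L')$.

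Next, if $X'$ is uniruled, I would form the MRC fibration $X'\dashrightarrow W$, whose base is non-uniruled of dimension $<n$ by Graber--Harris--Starr; since $K_{X'}+\Delta'$ is nef while the general fibers are rationally connected, $\Delta'$ must restrict largely on them, and a canonical bundle formula of Ambro--Fujino--Kawamata type descends the nonvanishing problem to a klt pair $(W,\Delta_W)$ equipped with a nef divisor $L_W$ in dimension $<n$, which is handled by the induction hypothesis. If instead $n(X',K_{X'}+\Delta'+L')<n$, I would pass to the nef reduction $f:X'\dashrightarrow Z$: on a general fiber $F$, the nef classes $K_F+\Delta'|_F$ and $L'|_F$ sum to zero numerically, and the Abundance Conjecture in dimension $<n$ (assumption (b)) applied to $(F,\Delta'|_F)$ forces each of them to be torsion, so that a second canonical bundle formula descends the problem to $(Z,\Delta_Z)$ in dimension $<n$, again completed by induction.

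After Steps 1--3 the hypotheses of the Weak Generalised Nonvanishing Conjecture in dimension $n$ are in force, so (d) provides num-effectivity of $K_X+\Delta+tL$ for all $t\geq 0$, yielding part (ii); the extra semiampleness hypothesis in (ii) is what allows the inductive descents in Steps 2--3 to deliver num-effectivity in the entire range $t\in[0,\infty)$, essentially by gluing conclusions along the Iitaka fibration of $K_X+\Delta$, whereas part (i) only needs a small interval $[0,t_0]$ produced by those descents at the threshold $t$ where semiampleness is not available. I expect the main obstacle to lie in Step 2, since BDPP only rules out uniruledness when $\Delta=0$: the interplay between $\Delta'$ and the rationally connected fibers of the MRC fibration must be controlled carefully, and the canonical bundle formula has to be organised so that the three conditions of the Weak conjecture at the base $W$ --- $\Q$-factoriality, minimality of the base pair, and the maximal nef-dimension condition --- are all inherited at the lower-dimensional level.
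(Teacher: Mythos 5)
Your high-level architecture is sound and partially matches the paper's: reduce to a minimal model via an $L$-trivial MMP, dispose of the case $n(X,K_X+\Delta+L)<n$ by descending along the nef reduction map to a klt pair in dimension $\leq n-1$, and invoke the Weak Generalised Nonvanishing Conjecture in the remaining case. Your nef-reduction descent is essentially Step 2 of Theorem \ref{thm:induction}: one observes $(K_X+\Delta)|_F\equiv 0$ and $L|_F\equiv 0$ on a general fiber $F$, produces a relative good model over the base (via lower-dimensional MMP and Abundance), and uses Ambro's canonical bundle formula to descend the problem. One minor misattribution: the MMP tool you want is Proposition \ref{pro:contocon}, not Theorem \ref{thm:MMPtwist}, which is an unrelated nonvanishing criterion used in Section \ref{sec:MMP}.

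The genuine gap is your Step 2, and you are right to flag it as the main obstacle. The MRC fibration $X'\dashrightarrow W$ does not give a canonical bundle formula here: Ambro's formula requires $K_{X'}+\Delta'\sim_{\Q,W}0$, or at least a klt-trivial structure relative to $W$. But in your situation $K_{X'}+\Delta'$ is merely pseudoeffective globally, and its restriction to a rationally connected fiber can have arbitrary Kodaira dimension between $0$ and $\dim F$; nothing forces $K_{X'}+\Delta'$ to be trivial along the MRC fibration, so the nonvanishing problem does not descend to $W$. The paper's route is entirely different and is the key technical ingredient your proposal is missing: it dichotomizes on whether $K_X$ is pseudoeffective (after terminalising, this is equivalent to non-uniruledness). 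When $K_X$ is not pseudoeffective, Lemma \ref{lem:Knotpsef} uses the pseudoeffective threshold $\tau$ of $K_X$ against $\Delta$ together with \cite[Theorem 3.3]{DL15} to produce an effective $\Q$-divisor $D_\tau\sim_\Q K_X+\tau\Delta$ with controlled support; after a perturbation this feeds into Theorem \ref{thm:Support}, whose cyclic-covering construction (Steps 2 and 4 there) passes to a finite cover $X''$ ramified along a reduced avatar of $\Delta$, and the discrepancy formula for cyclic covers \cite[Proposition 2.14]{DL15} shows $X''$ is not uniruled and has effective canonical class. Running the MMP on the cover then reaches a setting where the Weak Generalised Nonvanishing Conjecture applies. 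Without this pseudoeffective-threshold and cyclic-cover mechanism, the uniruled case of the reduction remains open in your proposal.
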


The heart of the argument is in Theorem \ref{thm:Support}, which essentially deals with the situation when $X$ is not uniruled, together with Theorem \ref{thm:B}, which deals with the situation when the nef dimension $n(X,K_X+\Delta+L)$ is not maximal.
 
\medskip

A more general version of the Generalised Nonvanishing Conjecture was proposed by Han and Liu \cite{HL18}. They do not require the divisor $K_X+\Delta$ to be pseudoeffective and allow log canonical singularities, and they confirm the conjecture in dimension $2$.

\section{Preliminaries}\label{sec:prelim}

In this section we collect notation and technical lemmas that we use later. 

\subsection{Num-effectivity, models, MMP}
Unless otherwise stated, a $\Q$-di\-vi\-sor on a projective variety is assumed to be a Weil $\Q$-divisor. We write $D \geq 0$ for an effective $\Q$-divisor $D$ on a normal variety $X$. 

A $\Q$-divisor $L$ on a projective variety $X$ is \emph{num-effective} if the numerical class of $L$ belongs to the effective cone of $X$. A $\Q$-divisor $L$ on a projective variety $X$ is \emph{num-semiample} if there exists a semiample $\Q$-divisor $L'$ on $X$ such that $L\equiv L'$.

We need the following easy lemma.

\begin{lem}\label{lem:easylemma}
Let $D$ and $D'$ be effective $\Q$-Cartier $\Q$-divisors on a normal projective variety $X$ and let $L$ be a $\Q$-Cartier $\Q$-divisor on $X$. Assume that $\Supp D=\Supp D'$. Then:
\begin{enumerate}
\item[(a)] $D+tL$ is big for all $t>0$ if and only if $D'+tL$ is big for all $t>0$,
\item[(b)] there exists $t_0>0$ such that $D+tL$ is num-effective for all $0\leq t\leq t_0$ if and only if there exists $t_0'>0$ such that $D'+tL$ is num-effective for all $0\leq t\leq t_0'$.
\end{enumerate}
\end{lem}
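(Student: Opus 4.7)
The lemma is essentially formal once one extracts the right consequence of the hypothesis $\Supp D=\Supp D'$. My first step would be to note that if $D=\sum d_iE_i$ and $D'=\sum d'_iE_i$ are written over the common set of prime components (with $d_i,d'_i>0$), then setting
\[
a=\max_i\frac{d_i}{d'_i},\qquad b=\max_i\frac{d'_i}{d_i},
\]
both $\Q$-divisors $aD'-D$ and $bD-D'$ are effective. This is the only place where the assumption on supports is used.

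Next, for both (a) and (b) I would run the same computation: write
\[
aD'+tL=(aD'-D)+(D+tL).
\]
Since bigness of a $\Q$-Cartier $\Q$-divisor is preserved under adding an effective $\Q$-Cartier $\Q$-divisor, and since num-effectivity is likewise preserved (because $M\equiv E\geq0$ together with $F\geq0$ gives $M+F\equiv E+F\geq0$), bigness (resp.\ num-effectivity) of $D+tL$ transfers to $aD'+tL$. Rescaling by the positive rational $1/a$ then converts this into the corresponding statement for $D'+(t/a)L$. The reverse implication is obtained by the symmetric argument with $b$ in place of $a$ and the roles of $D$ and $D'$ swapped.

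To conclude part (a), as $t$ ranges over all positive rationals so does $t/a$, so the conditions ``$D+tL$ is big for all $t>0$'' and ``$D'+sL$ is big for all $s>0$'' are indeed equivalent. For part (b), if $D+tL$ is num-effective for $0\leq t\leq t_0$, the argument produces num-effectivity of $D'+sL$ for $0\leq s\leq t_0/a$, so one may set $t_0':=t_0/a$; the reverse inequality is obtained analogously with threshold $t_0/b$.

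I do not expect any genuine obstacle here. The only subtlety worth flagging is that passing between $D$ and $D'$ forces the parameter $t$ to be rescaled by $1/a$ or $1/b$, which is precisely why the threshold $t_0'$ in (b) is allowed to differ from $t_0$ and why the statement in (a) is framed as a quantifier over \emph{all} $t>0$ rather than over a fixed interval.
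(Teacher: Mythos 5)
Your proof is correct and follows essentially the same route as the paper's: extract from $\Supp D=\Supp D'$ an effective difference ($aD'-D\geq 0$ in your version, $mD'-D\geq0$ with $m\in\N$ in the paper's), decompose $aD'+tL=(aD'-D)+(D+tL)$, use that bigness and num-effectivity persist under adding an effective $\Q$-Cartier divisor, and absorb the rescaling into the parameter $t$. The only cosmetic difference is that the paper clears denominators by choosing an integer $m$ with $mD'\geq D$, whereas you work directly with the rational $a=\max_i d_i/d'_i$; both are valid and the bookkeeping with $t_0'=t_0/a$ is exactly right.
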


\begin{proof}
Assume that $D+tL$ is big for all $t>0$. Pick a positive integer $m$ such that $mD'\geq D$. Then for each $t>0$, the divisor
$$m(D'+tL)=(D+mtL)+(mD'-D)$$
is big. This shows (a).

Now assume that $D+tL$ is num-effective for all $0\leq t\leq t_0$, for some positive $t_0$. Pick a positive integer $m$ such that $mD'\geq D$. Then for every $0\leq t\leq t_0/m$ there exists an effective divisor $F$ such that $D+mt_0 L\equiv F$, hence 
$$m(D'+tL)=D+mtL+(mD'-D)\equiv F+(mD'-D)\geq0,$$
which gives (b).
\end{proof}

A \emph{fibration} is a projective surjective morphism with connected fibres between two normal varieties.

If $f\colon X\to Y$ is a surjective morphism of normal projective varieties and if $D$ is a $\Q$-divisor on $X$, then $D$ is \emph{$f$-exceptional} if $\codim_Y f(\Supp D) \geq 2$.

\begin{dfn}
Let $X$ and $Y$ be normal projective varieties, and let $D$ be a $\Q$-Cartier $\Q$-divisor on $X$. Let $\varphi\colon X\dashrightarrow Y$ be a birational contraction, and assume that $\varphi_*D$ is $\Q$-Cartier. Then $\varphi$ is \emph{$D$-non-positive} (respectively \emph{$D$-negative}) if there exists a smooth resolution of indeterminacies $(p,q)\colon W\to X\times Y$ of $\varphi$ such that
$$ p^*D\sim_\Q q^*\varphi_*D+E,$$
where $E\geq0$ is a $q$-exceptional $\Q$-divisor (respectively, $E\geq0$ is a $q$-exceptional $\Q$-divisor and $\Supp E$ contains the proper transform of every $\varphi$-exceptional divisor). If additionally $\varphi_*D$ is semiample, the map $\varphi$ is a \emph{good model} for $D$.
\end{dfn}

\begin{lem}\label{lem:MMPnumeff}
Let $\varphi\colon X\dashrightarrow Y$ be a birational contraction between normal projective varieties, where $X$ has rational singularities. Let $D$ be a $\Q$-Cartier $\Q$-divisor on $X$ such that $\varphi_*D$ is $\Q$-Cartier on $Y$ and such that the map $\varphi$ is $D$-non-positive. If $\varphi_*D$ is num-effective, then $D$ is num-effective.
\end{lem}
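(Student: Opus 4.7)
The plan is to build an effective $\Q$-Cartier divisor on $X$ in the numerical class of $D$, by first producing one on a common smooth resolution $W$ and then descending to $X$, using rational singularities to absorb the resulting numerically trivial discrepancy.

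First, I would pick an effective $\Q$-Cartier divisor $F$ on $Y$ with $F\equiv\varphi_*D$. The $D$-non-positivity of $\varphi$ furnishes a smooth resolution $(p,q)\colon W\to X\times Y$ together with an effective $q$-exceptional $\Q$-divisor $E$ on $W$ satisfying $p^*D\sim_\Q q^*\varphi_*D+E$. Since $q^*$ preserves numerical equivalence,
$$p^*D\equiv G:=q^*F+E\ge 0,$$
so $T:=p^*D-G$ is a numerically trivial $\Q$-Cartier divisor on $W$.

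Next, I would descend $T$ to $X$. Because $X$ has rational singularities, the vanishing $R^ip_*\OO_W=0$ for $i>0$ yields $H^1(X,\OO_X)\cong H^1(W,\OO_W)$, and the exponential sequences on $X$ and $W$ combine to show that $p^*\colon\Pic^0(X)_\Q\to\Pic^0(W)_\Q$ is an isomorphism. Since $\Pic^\tau$ agrees with $\Pic^0$ after tensoring with $\Q$, there exists a numerically trivial $\Q$-Cartier divisor $S$ on $X$ with $p^*S\sim_\Q T$. Setting $D':=D-S$, one has $D'\equiv D$ and $p^*D'\sim_\Q G\ge 0$. Applying $p_*$, which preserves $\sim_\Q$ since $p$ is birational, and using $p_*p^*D'=D'$ (as $D'$ is $\Q$-Cartier), gives $D'\sim_\Q p_*G\ge 0$. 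Since $p_*G$ is $\Q$-linearly equivalent to the $\Q$-Cartier divisor $D'$ it is itself $\Q$-Cartier, so $D\equiv D'\sim_\Q p_*G\ge 0$ exhibits $D$ as num-effective.

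The heart of the argument is the $\Pic^0$-descent step, which is precisely where rational singularities of $X$ enter: without this hypothesis the numerically trivial correction $S$ on $X$ need not exist, and numerically trivial classes on $W$ may fail to arise from $X$. Everything else is a standard push--pull calculation, parallel to the usual MMP proof that $\Q$-linear effectivity descends along $D$-non-positive birational contractions, with the only difference being the need to first convert numerical equivalence into $\Q$-linear equivalence on $W$ via the correction by $S$.
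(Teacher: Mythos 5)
Your proof is correct and follows essentially the same route as the paper: use $D$-non-positivity to produce an effective divisor numerically equivalent to $p^*D$ on the resolution $W$, then descend num-effectivity from $W$ to $X$ via the rational singularities hypothesis. The paper simply delegates the descent step to a citation (\cite[Lemma 2.14]{LP18a}), whereas you prove it inline via the $\Pic^0$ comparison $H^1(X,\OO_X)\cong H^1(W,\OO_W)$; the substance of the two arguments is the same.
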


\begin{proof}
Let $(p,q)\colon W\to X\times Y$ be a smooth resolution of indeterminacies of $\varphi$. Since $\varphi$ is $D$-non-positive, there exists an effective $q$-exceptional divisor $E$ on $W$ such that
$$p^*D\sim_\Q q^*\varphi_*D+E.$$
If there exists an effective $\Q$-divisor $G$ on $Y$ such that $\varphi_*D\equiv G$, then
$$p^*D\equiv q^*G+E\geq0,$$
hence $D$ is num-effective by \cite[Lemma 2.14]{LP18a}.
\end{proof}

A \emph{pair} $(X,\Delta)$ consists of a normal variety $X$ and a $\Q$-divisor $\Delta\geq0$ such that $K_X+\Delta$ is $\Q$-Cartier. The standard reference for the foundational definitions and results on the singularities of pairs and the Minimal Model Program is \cite{KM98}, and we use these freely in this paper. 

We recall additionally that flips for klt pairs exist by \cite[Corollary 1.4.1]{BCHM}. We also use throughout the paper that for every projective klt pair $(X,\Delta)$, a small $\Q$-factorialisation of $(X,\Delta)$ exists, see \cite[Corollary 1.37]{Kol13}; and that a terminalisation of $(X,\Delta)$ exists, see \cite[paragraph after Corollary 1.4.3]{BCHM}. A small $\Q$-factorialisation of $X$ is an isomorphism over the $\Q$-factorial locus of $X$, see \cite[1.40]{Deb01}.

The following result is well-known.

\begin{lem}\label{lem:numlintrivial}
Let $(X,\Delta)$ be a projective klt pair and let $f\colon X\to Y$ be a fibration to a projective variety $Y$. If $K_X+\Delta\equiv_f0$, then $K_X+\Delta\sim_{f,\Q}0$.
\end{lem}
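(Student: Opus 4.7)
The plan is to reduce the statement to the absolute case on the generic fibre of $f$ and then spread out. Consider the scheme-theoretic generic fibre $X_\eta$ and its restricted boundary $\Delta_\eta$; the pair $(X_\eta,\Delta_\eta)$ is klt over $K(Y)$ and satisfies $K_{X_\eta}+\Delta_\eta\equiv 0$. The known abundance theorem for klt pairs with numerically trivial log canonical class (due to Nakayama and subsequently sharpened by Campana--Koziarz--P\u aun and Gongyo) then gives $K_{X_\eta}+\Delta_\eta\sim_\Q 0$, after a harmless finite base change if needed to descend the equivalence from $\overline{K(Y)}$ to $K(Y)$.

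This $\Q$-linear equivalence is realised by a positive integer $m$ and a nonzero rational function $\varphi$ on $X$ (since $K(X_\eta)=K(X)$), for which $\ddiv_{X_\eta}(\varphi)=m(K_{X_\eta}+\Delta_\eta)$. Viewed on $X$ itself, this yields
\[
\ddiv_X(\varphi)=m(K_X+\Delta)+V,
\]
where $V$ is a $\Q$-divisor on $X$ whose restriction to the generic fibre vanishes, i.e.\ $V$ is vertical with respect to $f$. Since $m(K_X+\Delta)\equiv_f 0$ by hypothesis and $\ddiv_X(\varphi)\equiv 0$ globally, we have $V\equiv_f 0$.

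The remaining step is a classical lemma on vertical divisors: a vertical $\Q$-Cartier divisor $V$ on $X$ with $V\equiv_f 0$ is $\Q$-linearly equivalent to $f^*E$ for some $\Q$-Cartier divisor $E$ on $Y$. The argument is local on $Y$: throwing away a closed subset of codimension at least two in $Y$, each prime component of $V$ maps onto a prime divisor of $Y$; grouping components of $V$ according to their images and using the hypothesis $V\cdot C=0$ for every curve $C$ contracted by $f$, one checks that on each fibre the coefficients of $V$ are a scalar multiple of the multiplicities of $f^*(f(V))$, which allows us to write $V=f^*E_U$ over the open part. By normality of $Y$ the divisor $E_U$ extends uniquely to a $\Q$-Weil divisor $E$ on $Y$, and $\Q$-Cartierness is inherited from $V$. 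Combined with the previous paragraph this gives $m(K_X+\Delta)\sim_\Q f^*E$, and therefore $K_X+\Delta\sim_{f,\Q}0$.

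The main obstacle is the clean invocation of abundance for numerically trivial klt pairs on the generic fibre, which requires handling the non-closedness of $K(Y)$ and then spreading a linear equivalence defined over the generic point back to $X$ while controlling the vertical error term. The vertical-divisor lemma in the last step, while requiring a short explicit verification, is standard folklore.
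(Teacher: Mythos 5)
Your overall strategy --- reduce to the generic fibre, invoke abundance for numerically trivial klt pairs, spread a rational function over $X$ and then absorb the resulting vertical error term --- is genuinely different from the paper's. The paper instead observes that $(K_X+\Delta)|_F\sim_\Q 0$ on a general fibre $F$ (again via Nakayama / Gongyo), invokes \cite[Theorem 2.12]{HX13} to get a \emph{relative good model} of $(X,\Delta)$ over $Y$, and notes that the associated relative Iitaka fibration must collapse because $K_X+\Delta\equiv_f 0$. This route finishes in two lines and never needs a decomposition of the vertical part. Your approach is more elementary in intent, but the crucial third step is where it breaks.

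The gap is in the ``classical lemma on vertical divisors.'' First, your argument only establishes $V=f^*E_U$ over an open $U\subseteq Y$ with complement of codimension $\geq 2$; after extending $E_U$ to a Weil divisor $E$, the difference $V-f^*E$ is a $\Q$-Cartier divisor supported over $Y\setminus U$, i.e.\ over a closed subset of codimension at least two in $Y$. You do not show that this part vanishes or is trivial after $\Q$-linear equivalence, and without that you get $m(K_X+\Delta)\sim_\Q f^*E$ only over $f^{-1}(U)$, which is weaker than the statement. Killing this ``very vertical'' remainder is precisely where a negativity-lemma-type argument for fibrations is needed (the two-sided nefness coming from $\equiv_f 0$ and the fact that every component maps into codimension $\geq 2$), and that step deserves a proof, not a wave at folklore. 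Second, you assert that the extended $E$ is $\Q$-Cartier ``inherited from $V$,'' but $Y$ is in general not $\Q$-factorial and $f^*E$ of a non-$\Q$-Cartier Weil divisor is not even defined; descent of $\Q$-Cartierness along $f$ from a numerically $f$-trivial $\Q$-Cartier divisor on $X$ is exactly the kind of statement one must argue, and it is where potential counterexamples (small $\Q$-factorialisations, non-$\Q$-factorial bases) lurk. Finally, a smaller point: the descent of the $\Q$-linear equivalence on the generic fibre from $\overline{K(Y)}$ to $K(Y)$, which you call harmless, relies on geometric integrality of $X_\eta$ and a norm/averaging argument that should at least be stated. None of these issues arise in the paper's proof, because passing through the relative good model of \cite{HX13} packages all of the MMP/negativity machinery into a single cited theorem.
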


\begin{proof}
Let $F$ be a general fibre of $f$. Then $(K_X+\Delta)|_F\sim_\Q0$ by \cite[Corollary V.4.9]{Nak04}, and thus the pair $(X,\Delta)$ has a good model over $Y$ by \cite[Theorem 2.12]{HX13}. Therefore, $K_X+\Delta$ is $f$-semiample, and there exists a fibration $\pi\colon X\to Z$ over $Y$ and a $\Q$-divisor $A$ on $Z$ which is ample over $Y$ such that $K_X+\Delta\sim_\Q\pi^*A$. As $K_X+\Delta\equiv_f0$, the map $\pi$ must be an isomorphism, and the lemma follows.
\end{proof}

\subsection{Numerical dimension}\label{subsec:numdim}

We recall the definition of the numerical dimension from \cite{Nak04,Kaw85}.

\begin{dfn}\label{dfn:kappa}
Let $X$ be a normal projective variety, let $D$ be a pseudoeffective $\Q$-Cartier $\Q$-divisor on $X$ and let $A$ be any ample divisor on $X$. Then the {\em numerical dimension\/} of $D$ is
$$\nu(X,D)=\sup\big\{k\in\N\mid \limsup_{m\rightarrow\infty}h^0\big(X, \mathcal O_X(\lfloor mD\rfloor+A)\big)/m^k >0\big\}.$$
When the divisor $D$ is nef, then equivalently
$$\nu(X,D)=\sup\{k\in\N\mid D^k\not\equiv0\}.$$
If $D$ is not pseudoeffective, we set $\nu(X,D)=-\infty$.
\end{dfn}

The Kodaira dimension and the numerical dimension of a $\Q$-divisor $D$ are preserved under any $D$-non-positive birational map, see for instance \cite[\S2.2]{LP18a}. If $X$ is a normal projective variety, and if $D$ and $D'$ are pseudoeffective $\Q$-divisors on $X$ such that $D'-D$ is pseudoeffective, then $\nu(X,D')\geq\nu(X,D)$ by the proof of \cite[Proposition V.2.7]{Nak04}.

If $D$ is a pseudoeffective $\R$-Cartier $\R$-divisor on a smooth projective variety $X$, and if $\Gamma$ is a prime divisor on $X$, we denote by $\sigma_\Gamma(D)$ Nakayama's multiplicity of $D$ along $\Gamma$, see\ \cite[Chapter III]{Nak04} for the definition and the basic properties.

For the proof of the following lemma we refer to \cite[Lemma 2.3]{LP18a}. 

\begin{lem}\label{lem:1}
Let $X$, $X'$, $Y$ and $Y'$ be normal varieties, and assume that we have a commutative diagram
\[
\xymatrix{ 
X' \ar[d]_{\pi'} \ar[r]^{f'} & Y' \ar[d]^{\pi}\\
X \ar[r]_{f} & Y,
}
\]
where $\pi$ and $\pi'$ are projective birational. Let $D$ be a $\Q$-Cartier $\Q$-divisor on $X$ and let $E\geq0$ be a $\pi'$-exceptional $\Q$-Cartier $\Q$-divisor on $X'$. If $F$ and $F'$ are general fibres of $f$ and $f'$, respectively, such that $F'=\pi'^{-1}(F)$, then 
$$\nu(F,D|_F)=\nu\big(F',(\pi'^*D+E)|_{F'}\big).$$
\end{lem}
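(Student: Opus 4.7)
The plan is to restrict the commutative diagram to a general fibre over $Y$, thereby reducing the lemma to an instance of the birational invariance of the numerical dimension under pullback plus an effective exceptional divisor, as recorded in Section~\ref{subsec:numdim}. The nontrivial content is that \emph{exceptionality} of $E$ is preserved by the restriction to a general fibre.

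I would first fix $F=f^{-1}(y)$ with $y\in Y$ chosen general enough so that $F$ and $F'=\pi'^{-1}(F)$ are normal and irreducible; the restricted morphism $\pi'|_{F'}\colon F'\to F$ is birational; and restriction commutes with pullback, i.e.
$$(\pi'^*D)|_{F'}=(\pi'|_{F'})^*(D|_F).$$
These generality statements follow by applying generic smoothness and generic flatness to the maps $f$, $f'$ and $\pi'$, together with the fact that $\pi$ (and hence $\pi'$) is birational. Next I would verify that $E|_{F'}\geq 0$ is $\pi'|_{F'}$-exceptional. Set $Z:=\pi'(\Supp E)$, so that $\codim_X Z\geq 2$. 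For each irreducible component $Z_0$ of $Z$, either $f(Z_0)\subsetneq Y$, in which case $F$ avoids $Z_0$ for general $y$, or $f|_{Z_0}$ is dominant, in which case the general fibre of $f|_{Z_0}$ has codimension at least $\codim_X Z_0\geq 2$ in $F$. In both situations one concludes $\codim_F\bigl(\pi'|_{F'}(\Supp E|_{F'})\bigr)\geq 2$, which is precisely the required exceptionality over $F$.

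With these identifications in place,
$$(\pi'^*D+E)|_{F'}=(\pi'|_{F'})^*(D|_F)+E|_{F'},$$
and a direct computation on a common resolution of $\pi'|_{F'}$ shows that $\pi'|_{F'}$, viewed as a birational contraction $F'\dashrightarrow F$, pushes this divisor forward to $D|_F$ and is $\bigl((\pi'|_{F'})^*(D|_F)+E|_{F'}\bigr)$-non-positive. The invariance of the numerical dimension under such maps (see Section~\ref{subsec:numdim}) then gives
$$\nu\bigl(F',(\pi'^*D+E)|_{F'}\bigr)=\nu(F,D|_F),$$
as required. The most delicate point is the exceptionality verification above, since a priori the restriction of a $\pi'$-exceptional divisor to a subvariety could fail to be exceptional for the restricted morphism; this is where the codimension bound on $\pi'(\Supp E)$ and the generality of $F$ are used in an essential way.
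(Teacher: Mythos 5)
Your argument is correct, and it is the natural one: restrict the diagram to a general fibre, check that exceptionality survives the restriction via the codimension bound $\codim_X\pi'(\Supp E)\geq 2$ combined with either the avoidance of non-dominant components or the preservation of codimension under taking general fibres, and then invoke the invariance of $\nu$ under a non-positive birational map. The paper itself does not write out a proof here but defers to \cite[Lemma 2.3]{LP18a}, so there is nothing in the present text to compare against; your reduction is the expected one, and the one delicate point you flag (restriction of an exceptional divisor to a general fibre need not a priori be exceptional for the restricted morphism) is exactly the content that needs verification, and your case split on $f(Z_0)$ handles it cleanly.
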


\subsection{Nef reduction}

In this paper we use crucially the nef reduction map. The following is the main result of \cite{BCE+}.

\begin{thm}\label{thm:nefreduction}
Let $L$ be a nef divisor on a normal projective variety $X$. Then there exists an almost holomorphic dominant rational map $f\colon X\dashrightarrow Y$ with connected fibres to a normal projective variety $Y$, called the \emph{nef reduction of $L$}, such that:
\begin{enumerate}
\item[(i)] $L$ is numerically trivial on all compact fibres $F$ of $f$ with $\dim F=\dim X-\dim Y$,
\item[(ii)] for every very general point $x\in X$ and every irreducible curve $C$ on $X$ passing through $x$ and not contracted by $f$, we have $L\cdot C>0$.
\end{enumerate}
The map $f$ is unique up to birational equivalence of $Y$ and the \emph{nef dimension} $n(X,L)$ of $L$ is defined as the dimension of $Y$:
$$ n(X,L) = \dim Y. $$
\end{thm}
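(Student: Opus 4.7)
The plan is to construct $f$ as a quotient of $X$ by the equivalence relation generated by $L$-trivial curves. Call an irreducible curve $C\subset X$ an \emph{$L$-trivial curve} if $L\cdot C=0$, and declare $x\sim y$ in $X$ whenever $x$ and $y$ can be joined by a connected chain of $L$-trivial curves. Since $L$ is nef, the $L$-trivial curves on $X$ are parametrised by at most countably many components of the Chow scheme of $X$ (equivalently, of $\Hilb(X)$), with universal families $\pi_i\colon \mathcal{C}_i\to T_i$ and evaluation morphisms $e_i\colon \mathcal{C}_i\to X$.

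For each finite index set $I$ and each integer $k\geq 1$ I would form the fibred product parametrising $k$-chains of curves drawn from the families indexed by $I$; its image in $X\times X$ is a closed algebraic relation $R_{I,k}$, and $\sim$ is the union of all such $R_{I,k}$. Following the construction of Campana's rational quotient (in the analytic category, via the cycle space), the key step is to show that on a Zariski-dense open subset $U\subset X$ the $\sim$-equivalence class through each point has constant dimension and is cut out by only finitely many of the relations $R_{I,k}$. Granting this, one obtains a morphism $f_U\colon U\to Y_0$ whose fibres are precisely the equivalence classes meeting $U$; compactifying and passing to the normalisation produces the desired $f\colon X\dashrightarrow Y$, which is almost holomorphic because $U$ is open and dense in $X$.

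Conditions (i) and (ii) then follow from the construction. For (i), a general fibre of $f$ is by definition swept out by $L$-trivial curves, so $L$ is numerically trivial there; nefness of $L$ together with semicontinuity of intersection numbers in a flat family of cycles propagates this to every compact fibre of the expected dimension $\dim X-\dim Y$. For (ii), if a very general $x\in X$ admitted an irreducible curve $C\ni x$ with $L\cdot C=0$ not contracted by $f$, then $C$ would yield an $L$-trivial identification enlarging the equivalence class of $x$, contradicting the maximality built into the construction; the uniqueness of $Y$ up to birational equivalence follows from the universal property, since any other map with properties (i) and (ii) must factor through $f$ generically, and conversely. The main obstacle, and the technical heart of \cite{BCE+}, is precisely making the equivalence relation algebraic: one must prove that the countably many relations $R_{I,k}$ stabilise on a suitable open subset and assemble into a single rational fibration, which rests on a careful interplay between the notions of general and very general point together with a boundedness statement for the relevant families of curves.
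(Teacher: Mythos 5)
The paper does not prove this theorem: it is stated as ``the main result of \cite{BCE+}'' and simply cited, so there is no in-paper argument to compare yours against. What you have written is a sketch of the strategy used in \cite{BCE+} itself, and directionally it is the right picture: one generates an equivalence relation from chains of $L$-trivial curves, parametrised by components of the Chow variety, and passes to a Campana-type quotient.

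That said, as a proof your proposal has a genuine gap, and you yourself flag it in the last sentence. The entire content of the theorem is that the a~priori countable union of relations $R_{I,k}$ coincides, over a dense Zariski-open (or at least countable-intersection-of-open) subset, with a \emph{single} closed algebraic relation whose fibres have constant dimension, so that Campana's quotient construction applies. You write ``granting this'' and then describe the rest as routine, but this step is not a technicality to be granted: it requires a boundedness statement for the families of $L$-trivial curves through a very general point, careful bookkeeping of general versus very general loci, and an argument that the chain length stabilises. Without it, nothing prevents the equivalence classes from jumping in dimension over every open set, in which case no almost holomorphic quotient exists. Similarly, the deduction of (i) needs more than semicontinuity: one must first show that a general fibre of the quotient map is indeed an $L$-trivial connected component, which is again part of the stabilisation argument, not a consequence of it. So the proposal correctly names the right object and the right framework, but it defers rather than resolves the substance of \cite{BCE+}; for the purposes of the present paper, citing that reference is both the intended and the correct move.
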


It is immediate from the definition that, with notation above, we have $n(X,L)=\dim X$ if and only if $L\cdot C>0$ for every irreducible curve $C$ on $X$ passing through a very general point on $X$. 

In the following lemmas we study how the nef dimension behaves under morphisms and birational contractions.

\begin{lem}\label{lem:nefredsum}
Let $X$ be a normal projective variety of dimension $n$. Let $F$ and $G$ be pseudoeffective divisors on $X$ such that $(F+G)\cdot C_X>0$ for every curve $C_X$ on $X$ passing through a very general point of $X$. Then for every positive real number $t$ and every curve $C$ on $X$ passing through a very general point of $X$ we have $(F+tG)\cdot C>0$.
\end{lem}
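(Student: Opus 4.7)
The plan is to reduce to showing that every curve through a very general point has non-negative intersection with any pseudoeffective divisor; the lemma then follows by an elementary convex combination argument.

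First, I would show that any irreducible curve $C$ passing through a very general point of $X$ is a member of a family of curves covering $X$. The Chow variety of irreducible curves on $X$ has countably many irreducible components $\{T_i\}_{i\in\N}$; letting $e_i\colon \mathcal{U}_i\to X$ be the evaluation from the universal family over $T_i$, each image $e_i(\mathcal{U}_i)$ is closed, and is either all of $X$ or a proper closed subset. A very general point $x$ avoids the countable union of those $e_i(\mathcal{U}_i)$ which are proper; so whenever $x$ lies on a curve $C$ with $[C]\in T_i$, the family $T_i$ must be covering, and in particular the class of $C$ is movable.

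Next, I would invoke the standard fact that a pseudoeffective $\R$-Cartier divisor intersects every movable curve class non-negatively; this is a consequence of Boucksom--Demailly--Paun--Peternell duality, but also follows elementarily: for any big approximation $D\equiv A+E$ with $A$ ample and $E\geq 0$, a general deformation $C'$ of $C$ within the covering family satisfies $C'\not\subset\Supp E$, so $D\cdot C=D\cdot C'\geq 0$ by numerical invariance in the family and continuity of intersection in $D$. Applied to $F$ and $G$ individually, this yields $F\cdot C\geq 0$ and $G\cdot C\geq 0$.

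Finally, I would split into two cases. If $t\geq 1$, then
$$F+tG=(F+G)+(t-1)G,$$
so $(F+tG)\cdot C=(F+G)\cdot C+(t-1)G\cdot C>0$, the first summand being strictly positive by hypothesis and the second non-negative. If $0<t<1$, then
$$F+tG=t(F+G)+(1-t)F,$$
and the analogous computation again gives $(F+tG)\cdot C>0$. The only conceptually non-trivial step is the first one---producing the covering family through a very general point---but this is a standard countability argument; the rest is algebraic bookkeeping.
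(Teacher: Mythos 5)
Your proof is correct, and it fills in the standard argument that the paper simply delegates to \cite[Lemma~2.11]{LP18a}: a curve through a very general point moves in a covering family (by the countability of components of the Chow variety), a pseudoeffective divisor pairs non-negatively with such a curve, and the two decompositions $F+tG=(F+G)+(t-1)G$ for $t\geq1$ and $F+tG=t(F+G)+(1-t)F$ for $0<t<1$ give strict positivity from the hypothesis plus these non-negativities. This is essentially the same approach the paper intends.
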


\begin{proof}
The proof is the same as that of \cite[Lemma 2.11]{LP18a}.
\end{proof}

\begin{lem}\label{lem:generalintersections}
Let $f\colon X\to Y$ be a birational morphism between normal projective varieties of dimension $n$. Let $D$ be a nef $\Q$-divisor on $Y$ and let $G=f^*D+E$, where $E\geq0$ is $f$-exceptional. If $G\cdot C_X>0$ for every curve $C_X$ on $X$ passing through a very general point on $X$, then $n(Y,D)=n$.
\end{lem}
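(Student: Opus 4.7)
The plan is to argue by contradiction, using the nef reduction of $D$ on $Y$ to produce a curve on $X$ that violates the hypothesis. Suppose $n(Y,D)<n$, and let $g\colon Y\dashrightarrow Z$ be the nef reduction of $D$ given by Theorem \ref{thm:nefreduction}; let $Y^\circ\subseteq Y$ be an open dense subset on which $g$ is a proper morphism onto its image, with $D$ numerically trivial on every compact fibre of $g|_{Y^\circ}$. Since $Y$ is normal and $f$ is birational, Zariski's main theorem provides an open $U\subseteq Y$ with $\codim_Y(Y\setminus U)\geq 2$ such that $f\colon f^{-1}(U)\to U$ is an isomorphism, and in particular $\Supp E\subseteq X\setminus f^{-1}(U)$.

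Pick $y\in Y$ very general, so that $y\in Y^\circ\cap U$ and $x:=f^{-1}(y)$ is a very general point of $X$; let $F=g^{-1}(g(y))$, of dimension $k=n-\dim Z\geq 1$, with $D|_F\equiv 0$. The heart of the argument is to find an irreducible curve $C\subseteq F$ through $y$ with $C\cap(Y\setminus U)=\emptyset$. I would split into two cases: if $g$ restricted to $(Y\setminus U)\cap Y^\circ$ fails to dominate $g(Y^\circ)$, then for very general $y$ the fibre $F$ is already disjoint from $Y\setminus U$. Otherwise $g|_{(Y\setminus U)\cap Y^\circ}$ is dominant, and the general fibre of this restriction has dimension at most $\dim(Y\setminus U)-\dim Z\leq n-2-\dim Z$, so $F\cap(Y\setminus U)$ has codimension at least $2$ in $F$; a standard Bertini argument, slicing $F$ by $k-1$ general members of a very ample linear system through $y$, then produces the required irreducible curve $C$ avoiding $Y\setminus U$.

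The proper transform $\tilde C\subseteq X$ of such $C$ is then contained in $f^{-1}(U)$, passes through $x$, and is disjoint from $\Supp E$. Since $f_*\tilde C=C$, the projection formula together with $D|_F\equiv 0$ yields
$$G\cdot\tilde C=f^*D\cdot\tilde C+E\cdot\tilde C=D\cdot C+0=0,$$
contradicting the hypothesis that $G\cdot C_X>0$ for every curve $C_X$ through a very general point of $X$. Therefore $n(Y,D)=n$.

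The main obstacle I expect is precisely the dimension count in the second paragraph: the codimension of $Y\setminus U$ in the ambient $Y$ does not automatically descend to codimension in a fibre of the nef reduction, so one must exploit the dimension of the $g$-image of $(Y\setminus U)\cap Y^\circ$ to conclude that $F\cap(Y\setminus U)$ either has codimension at least $2$ in $F$ or is empty. The remaining ingredients — the Bertini slicing and the concluding projection-formula computation — are routine.
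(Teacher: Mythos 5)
Your argument is correct and essentially coincides with the paper's proof: both use the nef reduction of $D$, pass to a very general compact fibre $F$, observe that the locus $V=f(\Exc f)$ meets $F$ in codimension at least $2$, cut out a complete-intersection curve in $F$ that avoids $V$, and conclude by computing $G\cdot\tilde C=0$ for its lift $\tilde C$, contradicting the hypothesis. The case distinction you make on whether the nef reduction restricted to $V$ dominates $Z$ is precisely the justification behind the paper's terse assertion that $\codim_F(F\cap V)\geq 2$, so you have simply spelled out the same step in more detail.
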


\begin{proof}
Denote $V:=f(\Exc f)$. Assume that $G\cdot C_X>0$ for every curve $C_X$ on $X$ passing through a point in a very general subset $U_X\subseteq X$, and assume that $n(Y,D)<n$. Let $\varphi\colon Y\dashrightarrow Z$ be the nef reduction of $D$. Then $\dim Y<n$ and recall that $\varphi$ is almost holomorphic. The set $U_Y=Y\setminus f(X\setminus U_X)$ is a very general subset of $Y$ such that $f^{-1}(U_Y)\subseteq U_X$. If $F$ is a very general fibre of $\varphi$, then $U_Y\cap F$ is a very general subset of $F$, and $\codim_F(F\cap V)\geq2$. Therefore, a very general complete intersection curve $C_Y$ in $F$ avoids the set $F\cap V$ and intersects $U_Y\cap F$. Then for $C:=f^{-1}(C_Y)$ we have $G\cdot C=0$ and $C\cap U_X\neq\emptyset$, a contradiction.
\end{proof}

\begin{lem}\label{lem:nefdim1}
Let $f\colon X\to Y$ be a birational morphism between normal projective varieties of dimension $n$. Let $D$ be a $\Q$-divisor on $Y$ and let $G=f^*D+E$, where $E$ is a pseudoeffective divisor on $X$. Assume that $D\cdot C_Y>0$ for every curve $C_Y$ on $Y$ passing through a very general point on $Y$. Then $G\cdot C_X>0$ for every curve $C_X$ on $X$ passing through a very general point on $X$.
\end{lem}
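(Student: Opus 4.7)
The plan is to show that the positivity of $G$ on a suitable curve $C_X$ splits naturally as: $f^*D$ contributes a strictly positive term coming from the hypothesis on $D$ on $Y$, while $E$ contributes a non-negative term because pseudoeffective divisors pair non-negatively with curves through very general points.

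First I would fix the very general set carefully. Since $f$ is birational, its exceptional locus $\Exc(f)$ is a proper closed subset of $X$; similarly, if $U_Y \subseteq Y$ is a very general subset where the hypothesis $D\cdot C_Y>0$ holds for every curve $C_Y$ through a point of $U_Y$, then $f^{-1}(U_Y)\setminus \Exc(f)$ is a very general subset of $X$. Let $x$ be a very general point of $X$ lying in this set, set $y:=f(x)\in U_Y$, and let $C_X$ be any irreducible curve through $x$. Since $x\notin \Exc(f)$, the curve $C_X$ is not contracted by $f$, so $C_Y:=f(C_X)$ is an irreducible curve on $Y$ passing through $y$. By hypothesis, $D\cdot C_Y>0$, and the projection formula gives
\[ f^*D\cdot C_X = D\cdot f_*C_X = (\deg f|_{C_X})\cdot (D\cdot C_Y)>0. \]

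Next I would show that $E\cdot C_X\geq 0$. The key observation is that a curve through a very general point is movable: the Chow variety (equivalently, the relevant Hilbert scheme) of $X$ has at most countably many irreducible components, and for each component the image in $X$ of the universal family is either all of $X$ or a proper closed subset. A very general point of $X$ avoids the countable union of the proper-image loci, hence the Chow component of $[C_X]$ must dominate $X$, i.e.\ $C_X$ moves in a covering family. By the theorem of Boucksom--Demailly--P\u aun--Peternell, the pseudoeffective cone of divisors is dual to the closure of the movable cone of curves, so $E\cdot C_X\geq 0$ since $E$ is pseudoeffective.

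Combining these two steps, $G\cdot C_X = f^*D\cdot C_X + E\cdot C_X>0$, as desired. The main obstacle I anticipate is the second step: one has to justify that a single curve through a very general point is movable in the sense required by BDPP, which relies on the countability of components of the Chow/Hilbert scheme. Everything else (the avoidance of $\Exc(f)$, the preimage of a very general set being very general, and the projection formula for the normal birational morphism $f$) is routine.
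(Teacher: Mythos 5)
Your proof is correct and takes essentially the same approach as the paper: split $G\cdot C_X = f^*D\cdot C_X + E\cdot C_X$, use the projection formula to make the first term strictly positive, and use pseudoeffectivity of $E$ to make the second term nonnegative. The paper simply records as a standard fact that a pseudoeffective divisor pairs nonnegatively with any curve through a very general point, whereas you supply the justification (Chow-variety countability plus BDPP duality); a slightly more direct route is to approximate $E$ by effective classes $E_i$ and note that a general member of the covering family of $C_X$ avoids $\Supp E_i$, giving $E_i\cdot C_X\geq 0$ and hence $E\cdot C_X\geq 0$ in the limit, without invoking the full BDPP theorem.
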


\begin{proof}
Assume that $D\cdot C_Y>0$ for every curve $C_Y$ on $Y$ passing through a point in a very general subset $U_Y\subseteq Y$. We may assume that $U_Y$ is contained in the locus on $Y$ over which $f$ is an isomorphism. Since $E$ is pseudoeffective, there exists a very general subset $U_X\subseteq X$ such that $E\cdot C_X\geq0$ for every curve $C_X$ on $X$ passing through a point in $U_X$. Then for every curve $C$ on $X$ passing through a point in $U_X\cap f^{-1}(U_Y)$ we have $G\cdot C\geq f^*D\cdot C=D\cdot f(C)>0$. 
\end{proof}

\begin{cor}\label{cor:nefdim}
Let $\varphi\colon X\dashrightarrow Y$ be a birational contraction between normal projective varieties of dimension $n$. Let $D$ be a $\Q$-Cartier $\Q$-divisor on $X$ such that $\varphi_*D$ is $\Q$-Cartier and nef, and assume that $\varphi$ is $D$-non-positive. If $D\cdot C>0$ for every curve $C$ on $X$ passing through a very general point on $X$, then $n(Y,\varphi_*D)=n$.
\end{cor}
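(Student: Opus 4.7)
The plan is to assemble Lemmas \ref{lem:generalintersections} and \ref{lem:nefdim1} by passing to a common resolution of indeterminacies of $\varphi$.

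First, I would take a smooth resolution of indeterminacies $(p,q)\colon W\to X\times Y$ of $\varphi$. Since $\varphi$ is $D$-non-positive, by definition there exists an effective $q$-exceptional $\Q$-divisor $E$ on $W$ such that
$$p^*D\sim_\Q q^*\varphi_*D+E.$$

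Next, I would push the positivity hypothesis from $X$ up to $W$. This is exactly the content of Lemma \ref{lem:nefdim1} applied to the birational morphism $p\colon W\to X$ and to the divisor $D$ on $X$: taking the (trivially pseudoeffective) divisor $0$ on $W$, the hypothesis that $D\cdot C>0$ for every curve $C$ on $X$ through a very general point gives that $p^*D\cdot C_W>0$ for every curve $C_W$ on $W$ through a very general point on $W$. By the linear equivalence above, the same conclusion holds for the divisor $q^*\varphi_*D+E$.

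Finally, I would push down from $W$ to $Y$ via Lemma \ref{lem:generalintersections} applied to the birational morphism $q\colon W\to Y$, to the nef $\Q$-divisor $\varphi_*D$ on $Y$, and to the effective $q$-exceptional divisor $E$. The conclusion of that lemma gives exactly $n(Y,\varphi_*D)=n$, finishing the proof.

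There is no real obstacle here, since both nontrivial steps are black-boxed by the preceding lemmas; the only point requiring any care is making sure the linear equivalence provided by $D$-non-positivity is compatible with the hypotheses of Lemma \ref{lem:generalintersections} (namely, that $E$ is effective and $q$-exceptional), which is built directly into the definition of $D$-non-positive.
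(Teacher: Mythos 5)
Your proof is correct and follows the paper's argument almost verbatim: pass to a smooth resolution of indeterminacies $(p,q)$, use $D$-non-positivity to write $p^*D\sim_\Q q^*\varphi_*D+E$ with $E\geq 0$ $q$-exceptional, apply Lemma~\ref{lem:nefdim1} (with the trivial pseudoeffective divisor) to lift the positivity hypothesis to $W$, and conclude with Lemma~\ref{lem:generalintersections} applied to $q$. No gaps.
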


\begin{proof}
Let $(p,q)\colon W\to X\times Y$ be a resolution of indeterminacies of $f$. Since $\varphi$ is $D$-non-positive, there exists an effective $q$-exceptional divisor $E$ on $W$ such that
$$p^*D\sim_\Q q^*\varphi_*D+E.$$
By Lemma \ref{lem:nefdim1} we have $p^*D\cdot C_W>0$ for every curve $C_W$ on $W$ passing through a very general point on $W$. But then the conclusion follows from Lemma \ref{lem:generalintersections}.
\end{proof}

\subsection{Metrics on $\Q$-divisors}

We briefly recall the properties of metrics on line bundles on projective varieties. For a more thorough treatment, see \cite{DPS01,De12,Bou04}. In the sequel, if $F$ is a prime divisor on a smooth projective variety $X$, then $[F]$ denotes the current of integration along $F$. If $h$ is a singular metric on a some line bundle, and if $\varphi$ is a corresponding local plurisubharmonic function, then $\mathcal I(h)=\mathcal I(\varphi)$ denote the multiplier ideal associated to $h$, respectively $\varphi$. 

Let $X$ be a smooth projective variety and let $D$ be a Cartier divisor on $X$. Then $D$ is pseudoeffective if and only if $\OO_X(D)$ has a singular metric $h$ whose associated curvature current $\Theta_h(D)$ is semipositive (as a current); we also write $\Theta_h(D)\geq0$.\footnote{Often in the literature such currents are called positive.} Equivalently, $D$ is pseudoeffective if and only if its Chern class $c_1(D)$ contains a closed semipositive current. 

Recall that by Demailly's regularisation technique \cite[\S15.B]{De12}, every pseudoeffective line bundle on a smooth projective variety can be equipped with a sequence of singular metrics with algebraic singularities whose curvature currents converge to a semipositive current; for the definition of metrics with algebraic singularities, see for instance \cite{LP18}. Thus, line bundles possessing singular metrics with algebraic singularities and semipositive curvature current form an important subclass of pseudoeffective line bundles; for instance, all hermitian semipositive line bundles belong to this class, as well as all num-effective line bundles. 

In this paper, we consider a more general class of metrics with generalised algebraic singularities. This class of singularities behaves very well under the operations of the Minimal Model Program.

\begin{dfn} \label{dfn:metrics}
Let $X$ be a normal complex projective variety and $D$ a Cartier divisor on $X$. We say that $D$ has a metric with \emph{generalised algebraic singularities} and semipositive curvature current, if there exists a resolution of singularities $\pi\colon Y \to X$ such that the line bundle $\pi^*\OO_X(D)$ has a singular metric $h$ whose curvature current is semipositive, and the Siu decomposition (see, for instance, \cite[\S2.2]{Bou04}) of $\Theta_h(\pi^*D)$ has the form
$$\Theta_h(\pi^*D)=\Theta+\sum\lambda_j [D_j],$$
where $\Theta\geq0$ is a current whose all Lelong numbers are zero, and $\sum\lambda_j [D_j]$ is a $\Q$-divisor on $Y$. We then say that the metric $h$ \emph{descends} to $Y$.

If $D$ is a $\Q$-Cartier divisor, we say that $D$ has a metric with generalised algebraic singularities and semipositive curvature current, if there exists a positive integer $m$ such that $mD$ is Cartier and $mD$ has a metric with generalised algebraic singularities and semipositive curvature current. 
\end{dfn} 

Since the pullback of a $(1,1)$-current whose all Lelong numbers are zero by a proper morphism is again a $(1,1)$-current whose all Lelong numbers are zero by \cite[Corollary 4]{Fav99}, the pullback of any $(1,1)$-current with generalised algebraic singularities is again a $(1,1)$-current with generalised algebraic singularities.

The class of metrics (or currents) with generalised algebraic singularities contains several other important classes of metrics: metrics with algebraic singularities, metrics whose all Lelong numbers are zero (in particular, all hermitian semipositive metrics are in this class), as well as currents of integration along effective divisors. 

\begin{rem}\label{rem:WZD}
In the notation from Definition \ref{dfn:metrics}, the current $\Theta$ is nef by \cite[Corollary 6.4]{Dem92}. Therefore, $D$ has a weak Zariski decomposition in the sense of \cite[Definition 1.3]{Bir12b}.
\end{rem}

The following lemma allows to calculate the multiplier ideal of a metric with generalised algebraic singularities.

\begin{lem}
Let $X$ be a complex manifold and let $\varphi$ and $\psi$ be two plurisubharmonic functions on $X$. Let $x$ be a point in $X$ and assume that the Lelong number of $\varphi$ at $x$ is zero. Then
$$\mathcal I(\varphi+\psi)_x=\mathcal I(\psi)_x.$$
In particular, let $\mathcal L$ be a line bundle on a projective manifold $X$ and let $h$ be a singular metric on $\mathcal L$ with semipositive curvature current whose Siu decomposition is $$\Theta_h(\mathcal L)=\Theta+\sum\lambda_i[D_i],$$ 
where all Lelong numbers of $\Theta$ are zero and $\sum D_i$ is a simple normal crossings divisor. Then
$$\textstyle\mathcal I(h^{\otimes m})=\OO_Y\big({-}\sum\lfloor m\lambda_j\rfloor D_j\big)\quad\text{for every positive integer }m.$$
\end{lem}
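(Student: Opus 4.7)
The plan is to prove the pointwise multiplier-ideal identity in the first assertion first, and then deduce the global formula by a monomial calculation in local snc coordinates.

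For the first assertion, the inclusion $\mathcal{I}(\varphi+\psi)_x\subseteq\mathcal{I}(\psi)_x$ is immediate: after shrinking to a small neighbourhood of $x$ and subtracting a constant we may assume $\varphi\le 0$, whence $e^{-\psi}\le e^{-\varphi-\psi}$ pointwise, so local integrability of $|f|^2 e^{-\varphi-\psi}$ forces that of $|f|^2 e^{-\psi}$. The reverse inclusion is the substantive part, and the Lelong-number hypothesis enters through two ingredients: the strong openness theorem of Guan--Zhou, and Skoda's integrability theorem. Given $f\in\mathcal{I}(\psi)_x$, strong openness produces some $r>1$ such that $|f|^2e^{-r\psi}$ is locally integrable near $x$. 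H\"older's inequality applied to the factorisation
$$|f|^2 e^{-\varphi-\psi}=\bigl(|f|^{2/r}e^{-\psi}\bigr)\cdot\bigl(|f|^{2/s}e^{-\varphi}\bigr),\quad s=\tfrac{r}{r-1},$$
gives
$$\int_U|f|^2e^{-\varphi-\psi}\,dV\le\Big(\int_U|f|^2e^{-r\psi}\,dV\Big)^{1/r}\Big(\int_U|f|^2e^{-s\varphi}\,dV\Big)^{1/s}.$$
The first factor is finite by strong openness; the second is finite because $f$ is bounded on a small neighbourhood of $x$ and $e^{-s\varphi}$ is locally integrable by Skoda, as $\nu(s\varphi,x)=0$.

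For the second assertion, I would work locally on a coordinate polydisc $U$ in which the snc divisor $\sum D_j$ is cut out by $z_1\cdots z_k=0$. The Siu decomposition identity $dd^c\varphi=\Theta+\sum\lambda_j[D_j]$ for a local weight $\varphi$ of $h$, combined with the Poincar\'e--Lelong formula, shows that $\tilde\varphi:=\varphi-\sum\lambda_j\log|z_j|^2$ differs from a plurisubharmonic function only by a locally bounded pluriharmonic correction and satisfies $dd^c\tilde\varphi=\Theta$, so all its Lelong numbers are zero. Multiplying by $m$ and applying the first assertion to $m\tilde\varphi$ (whose Lelong numbers remain zero) yields
$$\mathcal{I}(h^{\otimes m})_x=\mathcal{I}\Bigl(\textstyle\sum m\lambda_j\log|z_j|^2\Bigr)_x,$$
and the standard monomial integrability criterion $\int_U|z_j|^{2a_j-2m\lambda_j}\,dV<\infty\iff a_j>m\lambda_j-1$ identifies this last ideal with $(z_1^{\lfloor m\lambda_1\rfloor}\cdots z_k^{\lfloor m\lambda_k\rfloor})=\OO(-\sum\lfloor m\lambda_j\rfloor D_j)$ locally, and this globalises to the stated formula.

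The main obstacle is the reverse inclusion in the first assertion: without a tool at least as strong as strong openness one cannot extract enough $L^p$-integrability of $e^{-\psi}$ against $|f|^2$ to absorb $e^{-\varphi}$ via H\"older and Skoda using only the vanishing of the Lelong numbers of $\varphi$. Once that identity is in place, the remainder is a formal unwinding of the Siu decomposition in snc coordinates.
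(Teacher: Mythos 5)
Your proof of the pointwise identity is correct and takes essentially the same route as the paper: the easy inclusion from local boundedness above of $\varphi$, and for the reverse inclusion the Guan--Zhou strong openness theorem combined with H\"older's inequality and Skoda's integrability criterion, with the conjugate exponents split exactly as in the paper (openness on the $\psi$-factor, Skoda on the $\varphi$-factor). Your additional sketch of the ``in particular'' clause via local snc coordinates and the monomial integrability criterion, which the paper leaves implicit, is also sound.
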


\begin{proof}
Since $\varphi $ is locally bounded from above, we have 
$$\mathcal I(\varphi+\psi)_x\subseteq\mathcal I(\psi)_x,$$
and it suffices to show the reverse inclusion; note that by \cite[Theorem 2.6(ii)]{DEL00} one has even the stronger inclusion
$$ \mathcal I(\varphi+\psi)_x\subseteq \mathcal I(\varphi)_x \cdot \mathcal I(\psi)_x.$$

By \cite{GZ15} there exists a positive rational number $p>1$ such that $\mathcal I(p\psi)=\mathcal I(\psi)$, and let $q>1$ be a rational number such that $\frac1p+\frac1q=1$. Fix $f\in\mathcal I(p\psi)_x$, and we may assume that $f\in\mathcal I(p\psi)(U)$ for an open set $x\in U\subseteq X$. Since $q\varphi$ has the Lelong number zero at $x$, we have $\mathcal I(q\varphi)_x=\OO_{X,x}$ by Skoda's lemma \cite[Lemma 5.6]{De12}, hence we may assume that $|f|^2e^{-2q\varphi}$ is integrable on $U$. Therefore, H\"older's inequality gives
$$\int_U|f|^2e^{-2(\varphi+\psi)}\leq\left(\int_U|f|^2e^{-2q\varphi}\right)^\frac{1}{q}\left(\int_U|f|^2e^{-2p\psi}\right)^\frac{1}{p}<\infty,$$
hence $f\in\mathcal I(\varphi+\psi)(U)$, as desired.
\end{proof}

The following results connect metrics with generalised algebraic singularities with birational geometry. 

\begin{lem}\label{lem:Siu}
Let $f\colon X\to Y$ be a surjective morphism from a smooth projective variety $X$ to a normal projective variety $Y$. Let $D$ be a pseudoeffective $\Q$-Cartier divisor on $Y$ and let $G=f^*D+\sum e_iE_i$, where $E_i$ are $f$-exceptional prime divisors on $X$ and $e_i\geq0$ are rational numbers. Let $T$ be a closed semipositive current in $c_1(G)$. 

Then $T-\sum e_i[E_i]$ is a semipositive current. 
\end{lem}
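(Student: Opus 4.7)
Proof plan. By Siu's decomposition of $T$ into its divisorial and residual parts, $T = R + \sum_{\Gamma} \nu(T, \Gamma)\,[\Gamma]$ (the sum taken over prime divisors $\Gamma$ on $X$, with $\nu(T,\Gamma)$ the generic Lelong number and $R$ a closed semipositive current having vanishing generic Lelong number along every prime divisor), the current $T - \sum_i e_i [E_i]$ is closed semipositive if and only if $\nu(T, E_i) \geq e_i$ for each $i$. Hence the lemma reduces to establishing this pointwise lower bound on the generic Lelong numbers.

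To prove the bound, I plan to invoke Boucksom's theorem, which identifies the infimum of $\nu(T', E_i)$ over all closed semipositive currents $T'$ in the pseudoeffective class $c_1(G)$ with Nakayama's minimal multiplicity $\sigma_{E_i}(G)$ (valid on smooth projective varieties). It therefore suffices to show $\sigma_{E_i}(G) \geq e_i$. Fix an ample divisor $A$ on $X$, a small rational $\epsilon > 0$, and an arbitrary effective divisor $G' \equiv G + \epsilon A$. First one reduces to the case where $f$ is birational by restricting to a general complete intersection of very ample divisors on $X$: such a restriction preserves the generic divisorial Lelong numbers of $T$ along each $E_i$, and the restricted divisors remain exceptional for the restricted morphism. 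In the birational case, decompose $G' = G'_{\mathrm{hor}} + \sum_j \beta_j E_j$, where $G'_{\mathrm{hor}}$ collects the prime components of $G'$ that are not $f$-exceptional and $\beta_j = \mathrm{mult}_{E_j}(G')$. The standard identity $f^* f_* G'_{\mathrm{hor}} = G'_{\mathrm{hor}} + R$ for some effective $f$-exceptional divisor $R \geq 0$, combined with $G' \equiv G + \epsilon A$, yields the numerical relation
\[
\sum_j (\beta_j - e_j)[E_j] \equiv R - \epsilon(f^* f_* A - A)
\]
in $N^1(X)_{\mathbb{R}}$. Since the classes of the $f$-exceptional prime divisors are linearly independent in $N^1(X)_{\mathbb{R}}$ by the negativity lemma, equating coefficients along $E_i$ gives $\beta_i = e_i + R_i - \epsilon\,\mathcal{E}_i \geq e_i - \epsilon\,\mathcal{E}_i$, where $\mathcal{E} := f^* f_* A - A \geq 0$ is effective $f$-exceptional and $R_i, \mathcal{E}_i$ denote the respective multiplicities along $E_i$. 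Taking the infimum over all such $G'$ and then the limit $\epsilon \to 0^+$ produces $\sigma_{E_i}(G) \geq e_i$, completing the proof.

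\emph{The main obstacle.} The principal technical hurdles are the reduction to the birational case and the justification of the linear independence of the exceptional classes. I plan to handle the former by restricting to a general complete intersection of very ample divisors, relying on the fact that both the exceptional structure and the generic divisorial Lelong numbers are preserved by such a generic hyperplane section, and the latter by invoking the negativity lemma, possibly after first passing to the Stein factorization of $f$ so that the relevant factor becomes birational.
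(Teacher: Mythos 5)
Your overall strategy is exactly the paper's: use the Siu decomposition of $T$ to reduce the claim to $\nu(T,E_i)\geq e_i$, then compare with a current $T_{\min}$ of minimal singularities in $c_1(G)$ (via Boucksom) to get $\nu(T,E_i)\geq\sigma_{E_i}(G)$, and finally show $\sigma_{E_i}(G)\geq e_i$. The paper disposes of the last inequality by citing \cite[Lemma 2.16]{GL13}; you attempt to reprove it directly, and that is where the proposal develops a gap.

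The pivot of your reproof is the identity $f^*f_*G'_{\mathrm{hor}}=G'_{\mathrm{hor}}+R$ with $R\geq0$ $f$-exceptional (and likewise $f^*f_*A - A\geq0$). This presupposes that the pushforwards $f_*G'_{\mathrm{hor}}$ and $f_*A$ are $\Q$-Cartier so that their pullbacks are defined. The hypothesis only gives $Y$ normal, not $\Q$-factorial, and the pushforward of an arbitrary effective divisor $G'\equiv G+\epsilon A$ has no reason to be $\Q$-Cartier: $f_*G'$ is numerically equivalent to the $\Q$-Cartier divisor $D+\epsilon f_*A$ (when the latter makes sense), but on a general normal variety this does not force $f_*G'$ itself to be $\Q$-Cartier. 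Your proposed fix---passing through a Stein factorisation---does not help, since the intermediate variety is again only normal. You would at least need to first replace $Y$ by a small $\Q$-factorialisation and check that the exceptional locus and the $\sigma$-invariants behave correctly under that replacement, and this is essentially the content of the lemma you are trying to rederive. The other reduction you invoke, restricting to a general complete intersection to make $f$ generically finite, also requires more care than stated: one must check that the $\sigma$-functions $\sigma_{E_i}(\cdot)$ and the $f$-exceptionality of each $E_i$ are preserved under such a restriction, and neither point is automatic. The remaining ingredients (linear independence of exceptional classes in $N^1(X)_\R$ via the negativity lemma, $\epsilon\to 0^+$ limit in Nakayama's $\sigma$) are fine once the birational $\Q$-factorial setting is reached, but the passage to that setting is precisely what is missing.
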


\begin{proof}
Let $T_{\min}$ be a closed semipositive current with minimal singularities in $c_1(G)$, see for instance \cite[\S2.8]{Bou04}. For each $i$, let $\lambda_{\min,i}$ and $\lambda_i$ denote the Lelong numbers of $T_{\min}$, respectively $T$, along $E_i$. Then we have 
$$\sigma_{E_i}(G)\leq \lambda_{\min,i}$$
by \cite[Proposition 3.6(i)]{Bou04}, and $\lambda_{\min,i}\leq \lambda_i$ by the definition of the current with minimal singularities. On the other hand, we have 
$$e_i\leq \sigma_{E_i}(G)$$
by \cite[Lemma 2.16]{GL13}. The lemma follows by combining all this with the Siu decomposition of $T$.
\end{proof}

\begin{cor}\label{cor:algsinMMP}
Let $X$ be a normal projective variety and let $D$ be a $\Q$-Cartier $\Q$-divisor on $X$. Let $f\colon X\dashrightarrow Y$ be a birational contraction such that $f_*D$ is $\Q$-Cartier, and assume that $f$ is $D$-non-positive. If $D$ has a singular metric with generalised algebraic singularities and semipositive curvature current, then $f_*D$ has a singular metric with generalised algebraic singularities and semipositive curvature current.
\end{cor}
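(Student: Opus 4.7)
The strategy is to pull the given metric on $D$ back to a common smooth birational model of $f$, and then read off the desired structure on $f_*D$ by exploiting both the $D$-non-positivity of $f$ and Lemma \ref{lem:Siu}.

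First, I would choose a smooth resolution of indeterminacies $(p,q)\colon W\to X\times Y$ of $f$ witnessing the $D$-non-positivity, so that
$$ p^*D \sim_\Q q^*f_*D + E, $$
where $E\geq 0$ is $q$-exceptional. By blowing up $W$ further if necessary, I can assume that the resolution $\pi\colon Y'\to X$ of Definition \ref{dfn:metrics} for the given metric on $D$ factors through $p$, so that the pullback metric $h_W$ on (a suitable Cartier multiple of) $p^*D$ is still a singular metric with generalised algebraic singularities and semipositive curvature current; here the pullback preserves the zero-Lelong-number property of the smooth part by \cite[Corollary~4]{Fav99}. Writing the Siu decomposition
$$ \Theta_{h_W}(p^*D)=\Theta+\textstyle\sum \lambda_j[D_j], $$
I can also arrange that the SNC divisor $\sum D_j$ contains every component of $E$, with coefficient $e_j\geq 0$ (possibly zero).

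Next, I would apply Lemma \ref{lem:Siu} to the morphism $q\colon W\to Y$, the divisor $f_*D$ on $Y$, and the current $T=\Theta_{h_W}(p^*D)\in c_1(q^*f_*D+E)$. Note that $f_*D$ is pseudoeffective because $p^*D$ is, and the $q$-exceptional summand $E$ does not affect pseudoeffectivity on $Y$. Lemma \ref{lem:Siu} then yields that
$$ T':=T-[E]=\Theta+\textstyle\sum(\lambda_j-e_j)[D_j] $$
is a closed semipositive current in $c_1(q^*f_*D)$. Since $T'\geq 0$ and $\Theta$ has zero Lelong numbers, the generic Lelong number of $T'$ along each $D_j$ equals $\lambda_j-e_j\geq 0$, so the displayed expression is the Siu decomposition of $T'$; in particular, the divisorial part is a $\Q$-divisor and the residual part has zero Lelong numbers.

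Finally, $T'$ defines a singular metric $h'$ on (a suitable Cartier multiple of) $q^*f_*D$ with semipositive curvature current whose Siu decomposition has the shape required by Definition \ref{dfn:metrics}. Since $W$ is smooth and $q\colon W\to Y$ is a birational morphism, it serves as the resolution of singularities of $Y$ demanded in that definition, so $h'$ witnesses that $f_*D$ carries a metric with generalised algebraic singularities and semipositive curvature current. The only subtlety—and the step that must be executed carefully—is the nonnegativity of the coefficients $\lambda_j-e_j$, but this is guaranteed precisely by Lemma \ref{lem:Siu}, which is where the $D$-non-positivity of $f$ (encoded in the effectivity of $E$ and its $q$-exceptionality) is consumed.
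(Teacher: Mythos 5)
Your proof is correct and follows exactly the same route as the paper's: resolve the indeterminacies of $f$ on a smooth model $W$ where the given metric with generalised algebraic singularities descends, write $p^*D\sim_\Q q^*f_*D+E$ with $E\geq 0$ $q$-exceptional, apply Lemma \ref{lem:Siu} to conclude that subtracting $[E]$ from the curvature current leaves a semipositive current in $c_1(q^*f_*D)$, and observe that the resulting Siu decomposition still has the required shape. The paper is slightly terser (it writes the result as $\Theta+\sum\lambda_i[D_i]-\sum e_i[E_i]$ rather than re-indexing), but the argument is the same.
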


\begin{proof}
Let $(p,q)\colon W\to X\times Y$ be a resolution of indeterminacies of $f$ such that $W$ is smooth. Then there exist $q$-exceptional prime divisors $E_i$ on $W$ and rational numbers $e_i\geq0$ such that
\begin{equation}\label{eq:compare}
p^*D\sim_\Q q^*f_*D+\sum e_iE_i.
\end{equation}
We may assume that the given metric with generalised algebraic singularities on $D$ descends to $W$. Then the associated curvature current $T$ has the Siu decomposition of the form
$$T=\Theta+\sum\lambda_i [D_i],$$
where $\Theta\geq0$ has all Lelong numbers zero, $\sum D_i$ is a simple normal crossings divisor and $\lambda_i$ are positive rational numbers for all $i$. By \eqref{eq:compare} and by Lemma \ref{lem:Siu} we have $\sum\lambda_i D_i\geq \sum e_iE_i$, thus
$$\Theta+\sum \lambda_i [D_i]-\sum e_i[E_i]$$
is a semipositive curvature current in $c_1(q^*f_*D)$. This proves the result.
\end{proof}

We use below crucially the following hard Lefschetz theorem from \cite[Theorem 0.1]{DPS01}. 

\begin{thm}\label{thm:DPS}
Let $X$ be a compact K\"ahler manifold of dimension $n$ with a K\"ahler form $\omega$. Let $\mathcal L$ be a pseudoeffective line bundle on $X$ with a singular hermitian metric $h$ such that $\Theta_h(\mathcal L) \geq 0$. Then for every nonnegative integer $q$ the morphism
\[
\xymatrix{ 
H^0\big(X,\Omega^{n-q}_X\otimes\mathcal  L\otimes\mathcal I(h)\big) \ar[r]^{\omega^q\wedge\bullet} & H^q\big(X, \Omega^n_X\otimes \mathcal L\otimes\mathcal I(h)\big)
}
\]
is surjective.
\end{thm}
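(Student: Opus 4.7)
The plan is to adapt Demailly's $L^2$-Hodge-theoretic approach, combining the Bochner--Kodaira--Nakano identity with an approximation of the singular metric $h$ by regularized metrics whose curvature is only slightly negative.

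First, I would translate the problem into $L^2$-cohomology. Writing $\Omega^n_X=K_X$, and fixing the Kähler metric $\omega$ together with the singular metric $h$ on $\LL$, Nadel-type $L^2$-representability identifies $H^q\big(X,\Omega^n_X\otimes\LL\otimes\mathcal I(h)\big)$ with the space of $L^2$-harmonic $(n,q)$-forms on $X$ with values in $\LL$ (for the combined metric $\omega\otimes h$, restricted to $X\setminus\Sing(h)$). Analogously, $H^0\big(X,\Omega^{n-q}_X\otimes\LL\otimes\mathcal I(h)\big)$ is the space of holomorphic $(n-q,0)$-forms with values in $\LL$ that are $L^2$ with respect to $h$. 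The map $\omega^q\wedge\bullet$ is compatible with these identifications.

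The next step is to approximate: using Demailly's regularisation, write $h=\lim h_\nu$ for a decreasing sequence of metrics $h_\nu$ with analytic singularities and $\Theta_{h_\nu}(\LL)\geq -\varepsilon_\nu\,\omega$, with $\varepsilon_\nu\to 0$ and $\mathcal I(h_\nu)=\mathcal I(h)$ for $\nu\gg 0$. Fix a cohomology class $\xi$ on the target and represent it, for each $\nu$, by a harmonic $(n,q)$-form $u_\nu$ on $X\setminus \Sing(h_\nu)$. The Bochner--Kodaira--Nakano identity together with the hard Lefschetz decomposition of $(n,q)$-forms yields $u_\nu=\omega^q\wedge\alpha_\nu+v_\nu$, where $\alpha_\nu$ is primitive of type $(n-q,0)$ and $v_\nu$ is a ``primitive remainder''. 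The curvature inequality $\Theta_{h_\nu}(\LL)\geq-\varepsilon_\nu\omega$ combined with the primitivity of $v_\nu$ makes the curvature term $\bigl\langle[i\Theta_{h_\nu},\Lambda]v_\nu,v_\nu\bigr\rangle$ go to zero in the limit, forcing $v_\nu\to 0$ in $L^2$ and yielding that $\alpha_\nu$ is holomorphic. The appropriate twisting/contraction tricks of Demailly--Skoda ensure $L^2$-estimates for the $\bar\partial$-equation that allow one to solve $\bar\partial\beta_\nu=v_\nu$ with uniform norm control.

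Finally, I would pass to the limit $\nu\to\infty$. The uniform $L^2$-bounds on $\alpha_\nu$ with respect to the semicontinuous family $\omega\otimes h_\nu$ produce a weak $L^2$-limit $\alpha\in H^0\big(X,\Omega^{n-q}_X\otimes\LL\otimes\mathcal I(h)\big)$, and the identity $u=\omega^q\wedge\alpha$ modulo $\bar\partial$-exact forms survives the limit. The surjectivity then follows by varying $\xi$. The main technical obstacle is the limiting step: one has to ensure that the estimates produced at the approximate level $h_\nu$ are strong enough that the limiting holomorphic section lies in the correct multiplier ideal $\mathcal I(h)$, and that no mass of $\alpha_\nu$ escapes to the singular locus. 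This is handled by the semicontinuity of multiplier ideals $\mathcal I(h_\nu)=\mathcal I(h)$ from Demailly's approximation theorem, together with the strong openness theorem of Guan--Zhou, which guarantees the required compatibility between the $L^2$-condition and membership in $\mathcal I(h)$.
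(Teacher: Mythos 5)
The paper does not prove Theorem~\ref{thm:DPS}: it is quoted verbatim from \cite[Theorem~0.1]{DPS01} and used as a black box. So your write-up has to be judged as a reconstruction of the Demailly--Peternell--Schneider argument. The overall scaffolding you describe is the correct one: equisingular regularisation of $h$ by metrics $h_\nu$ with analytic singularities and curvature $\geq -\varepsilon_\nu\,\omega$, harmonic representatives for the approximating metrics, the Bochner--Kodaira--Nakano identity, and a weak $L^2$-limit using $\mathcal I(h_\nu)=\mathcal I(h)$. (The appeal to Guan--Zhou strong openness is anachronistic -- DPS use the equisingularity of Demailly's approximation -- but it is harmless.)

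The step that actually breaks is the Lefschetz decomposition in the middle of your argument. In bidegree $(n,q)$ the pointwise Lefschetz operator $\omega^q\wedge\bullet\colon \Lambda^{n-q,0}T^*_X\to\Lambda^{n,q}T^*_X$ is a \emph{bijection}: every $(n-q,0)$-form is automatically primitive, and there are no nonzero primitive $(n,q)$-forms for $q\geq 1$ (since $\Lambda$ is injective there). Consequently there is no ``primitive remainder'' $v_\nu$ at all; one simply writes $u_\nu=\omega^q\wedge v_\nu$ with $v_\nu$ of type $(n-q,0)$, uniquely determined pointwise. This makes the sentence ``forcing $v_\nu\to 0$ in $L^2$ and yielding that $\alpha_\nu$ is holomorphic'' vacuous -- it does not prove anything. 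What DPS actually prove is that $v_\nu$ becomes \emph{asymptotically} $\bar\partial$-closed: from the Bochner--Kodaira inequality for a $\Delta''_{h_\nu}$-harmonic $(n,q)$-form one gets $\|D'^{*}u_\nu\|^2\leq q\varepsilon_\nu\|u_\nu\|^2$, and a separate, nontrivial algebraic commutation lemma (the technical heart of the DPS paper) bounds $\|\bar\partial v_\nu\|$ in terms of $\|D'^{*}u_\nu\|$. That lemma is entirely absent from your sketch, replaced by an unmotivated ``solve $\bar\partial\beta_\nu=v_\nu$'' -- which is not a step of the proof (and in your own notation makes no sense, since $v_\nu$ is the candidate section, not an obstruction to be excised). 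Without the commutation lemma the limit $v$ has no reason to be holomorphic, and the argument does not close.
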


\section{Reduction to the Weak~Generalised~Nonvanishing~Conjecture}\label{sec:reductions}

In this section we prove Theorem \ref{thm:mainreduction}. In other words, in order to prove the Generalised Nonvanishing Conjecture, we may assume that  $X$ is not uniruled, that $K_X +\Delta$ is nef, and that the nef dimension of $K_X+\Delta+L$ is maximal. 

We achieve this in several steps. An important step is the proof of the following result announced in the introduction.

\begin{thm} \label{thm:B}
Assume the termination of flips in dimensions at most $n-1$, the Abundance Conjecture in dimensions at most $n-1$, and the Generalised Nonvanishing Conjecture in dimensions at most $n-1$.

Let $(X,\Delta)$ be a projective klt pair of dimension $n$ with $K_X+\Delta$ pseudoeffective, and let $L$ be a nef Cartier divisor on $X$ such that $K_X+\Delta+L$ is nef and $n(X,K_X+\Delta+L)<n$.

Then $K_X+\Delta+tL$ is num-effective for every $t\geq0$.
\end{thm}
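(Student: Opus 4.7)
The plan is to exploit the failure of maximal nef dimension to descend the problem to a variety of dimension strictly less than $n$, where the Generalised Nonvanishing Conjecture is available by induction. Set $M := K_X+\Delta+L$ and let $\pi\colon X\dashrightarrow Y$ be the nef reduction of $M$, so $\dim Y = n(X,M) < n$. Resolving indeterminacies, I choose a smooth variety $W$, a birational morphism $g\colon W\to X$, and a surjective morphism with connected fibres $q\colon W\to Y'$ to a smooth model $Y'$ of $Y$, with $\pi\circ g = q$. On $W$ I introduce a $\Q$-factorial klt pair $(W,\Delta_W)$ such that $K_W+\Delta_W = g^*(K_X+\Delta)+E_W$, where $E_W\geq 0$ is $g$-exceptional.

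The first step is to show that, on a very general fibre $\Phi$ of $q$, both $g^*(K_X+\Delta)|_\Phi$ and $g^*L|_\Phi$ are numerically trivial. Indeed $g^*M|_\Phi\equiv 0$ by the defining property of the nef reduction, while $g^*L|_\Phi$ is nef, hence pseudoeffective. The divisor $g^*(K_X+\Delta)$ is pseudoeffective on $W$, and pseudoeffectivity descends to very general fibres of a projective morphism, so $g^*(K_X+\Delta)|_\Phi$ is pseudoeffective as well. Since these two pseudoeffective classes sum to zero, both must vanish numerically. The assumed Abundance Conjecture in dimension at most $n-1$, applied to the klt pair $(\Phi,\Delta_W|_\Phi)$, then upgrades this to $\Q$-linear triviality of $(K_W+\Delta_W)|_\Phi$ after arranging $E_W|_\Phi=0$ by a suitable choice of resolution.

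Next, a relative $(K_W+\Delta_W)$-MMP over $Y'$, whose termination is supplied fibrewise by termination of flips in dimension at most $n-1$ combined with Lemma~\ref{lem:numlintrivial}, produces a model $W'\to Y'$ on which $K_{W'}+\Delta_{W'}$ is $\Q$-linearly trivial over $Y'$. The canonical bundle formula then yields a generalised klt pair $(Y',B_{Y'},M_{Y'})$ with $K_{Y'}+B_{Y'}+M_{Y'}$ pseudoeffective and $g^*(K_X+\Delta)\sim_\Q q^*(K_{Y'}+B_{Y'}+M_{Y'})$ modulo exceptional terms; a parallel descent exploiting numerical triviality of $g^*L$ on fibres produces a nef $\Q$-divisor $L_{Y'}$ on $Y'$ with $g^*L\equiv q^*L_{Y'}$. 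Regarding $(Y',B_{Y'})$ as a klt pair of dimension strictly less than $n$, equipped with the nef $\Q$-divisor $M_{Y'}+tL_{Y'}$, the Generalised Nonvanishing Conjecture in dimension at most $n-1$, evaluated at parameter $1$, yields num-effectivity of $K_{Y'}+B_{Y'}+M_{Y'}+tL_{Y'}$ for every $t\geq 0$. Pulling this back through $q^*$ and transferring via Lemma~\ref{lem:MMPnumeff} along the MMP $W\dashrightarrow W'$ and the resolution $g\colon W\to X$ then gives the desired num-effectivity of $K_X+\Delta+tL$ on $X$.

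The principal obstacle lies in the descent step: the moduli part $M_{Y'}$ produced by the canonical bundle formula means that $K_{Y'}+B_{Y'}$ alone need not be pseudoeffective, so some care is required to feed the data into the non-generalised inductive hypothesis — one must verify pseudoeffectivity either directly, or after an auxiliary relative MMP on $Y'$ and a perturbation by a small multiple of $M_{Y'}$. A secondary difficulty is the descent of the nef divisor $L$ itself: since $\pi$ is only almost holomorphic, the possible $q$-vertical contributions of $g^*L$ must be handled, which is achieved by combining the numerical triviality established in Step~1 with careful bookkeeping of the exceptional and vertical loci under the MMP.
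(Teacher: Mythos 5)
Your overall architecture is exactly the paper's: pass to the nef reduction of a divisor of the form $K_X+\Delta+(\text{nef})$, observe that $K_X+\Delta$ and $L$ are both numerically trivial on a general fibre (since they are pseudoeffective and their sum kills general curves), upgrade on fibres to $\Q$-triviality via Abundance, run a relative MMP over the base which terminates by the lower-dimensional hypotheses, push $K_X+\Delta$ down via a canonical bundle formula and $L$ down via its numerical triviality on fibres, apply the Generalised Nonvanishing Conjecture in dimension $\le n-1$, and pull back. So far, so good.

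The genuine gap is the step you yourself flag: what you call the ``principal obstacle.'' After the canonical bundle formula you land on a \emph{generalised} pair $(Y',B_{Y'},M_{Y'})$, and the Generalised Nonvanishing Conjecture as formulated in this paper takes a klt pair $(Y',B_{Y'})$ with $K_{Y'}+B_{Y'}$ \emph{pseudoeffective} as input. There is no reason $K_{Y'}+B_{Y'}$ should be pseudoeffective by itself — only the combination $K_{Y'}+B_{Y'}+M_{Y'}$ is, because it is the descent of the pseudoeffective $K_X+\Delta$. Your two proposed rescues do not close the gap: perturbing by $\varepsilon M_{Y'}$ does not produce a boundary because $M_{Y'}$ is merely nef (or b-nef on a birational model), not effective, so $B_{Y'}+\varepsilon M_{Y'}$ is not in general the boundary of a klt pair; and running an auxiliary MMP on $Y'$ does not make a non-pseudoeffective divisor pseudoeffective. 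The missing ingredient, which the paper uses at precisely this point, is Ambro's theorem on Kawamata-log-terminal-trivial fibrations (\cite[Theorem 0.2]{Amb05a}): after passing to the relative good model $X_{\min}$ and then to the relative canonical model $T$ over the nef-reduction base $Y$ (so that $K_{X_{\min}}+\Delta_{\min}$ is $\Q$-trivial over $T$), Ambro's theorem produces an honest klt pair $(T,\Delta_T)$ with $K_{X_{\min}}+\Delta_{\min}\sim_\Q\tau^*(K_T+\Delta_T)$ — no separate moduli part to feed into the inductive hypothesis. This absorption of the moduli part into a klt boundary is a deep fact (it rests on b-nefness and b-abundance of the moduli b-divisor) and is not something you can engineer by perturbation.

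Two smaller remarks. First, you work directly with the nef reduction of $K_X+\Delta+L$; the paper instead takes the nef reduction of $K_X+\Delta+mL$ with $m\gg 0$ using Lemma~\ref{lem:nefredsum} to preserve the hypothesis $n<\dim X$ — either choice is serviceable since both yield numerical triviality of $K_X+\Delta$ and $L$ on fibres. Second, your descent of $L$ to a nef $L_{Y'}$ with $g^*L\equiv q^*L_{Y'}$ requires the non-trivial \cite[Lemma 3.1]{LP18a}, which the paper invokes explicitly; your sketch should do likewise rather than rely on ``careful bookkeeping.'' The essential missing citation, though, is Ambro.
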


The proof of Theorem \ref{thm:B} is similar to the proof of \cite[Theorem 5.3]{LP18a}, and the proof of Theorem \ref{thm:Support} follows the strategy of \cite{DL15}.

\medskip

We start with the following lemma.

\begin{lem}\label{lem:termination}
Assume the Weak Generalised Nonvanishing Conjecture in dimension $n$, the existence of minimal models of klt pairs in dimension $n$, the Abundance Conjecture in dimensions at most $n-1$, and the termination of flips in dimension $n-1$. 

Assume Theorem \ref{thm:B} in dimension $n$. Let $(X,\Delta)$ be a klt pair of dimension $n$ such that $K_X+\Delta$ is pseudoeffective. Then $\kappa(X,K_X+\Delta)\geq0$ and every sequence of flips of $(X,\Delta)$ terminates.
\end{lem}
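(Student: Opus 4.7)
The strategy is to pass to a minimal model via the assumed existence of minimal models in dimension $n$, establish num-effectivity of the log canonical divisor there using the Weak Generalised Nonvanishing Conjecture and Theorem \ref{thm:B}, upgrade num-effectivity to Kodaira non-negativity using the standing abundance and termination hypotheses in lower dimensions, and finally deduce termination of arbitrary flip sequences. First, produce a $(K_X+\Delta)$-non-positive birational contraction $\varphi\colon X\dashrightarrow X'$ with $(X',\Delta':=\varphi_*\Delta)$ a $\Q$-factorial klt pair such that $K_{X'}+\Delta'$ is nef. Since the Kodaira dimension is invariant under $D$-non-positive maps and num-effectivity descends along $\varphi$ by Lemma \ref{lem:MMPnumeff}, it suffices to establish $\kappa(X',K_{X'}+\Delta')\geq 0$.

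For num-effectivity of $K_{X'}+\Delta'$, I would split on the nef dimension. If $n(X',K_{X'}+\Delta')<n$, the standing hypothesis that Theorem \ref{thm:B} holds in dimension $n$, applied with the trivial nef Cartier divisor $L=0$, directly gives num-effectivity. If $n(X',K_{X'}+\Delta')=n$, apply the Weak Generalised Nonvanishing Conjecture with $L=0$, after verifying that $X'$ is not uniruled; if $X'$ is uniruled one must run a detour via the MRC fibration of $X'$ together with the canonical bundle formula and the standing abundance in dimensions $\leq n-1$ to reduce to a non-uniruled base of smaller dimension. Once num-effectivity is secured, the delicate upgrade to $\kappa(X',K_{X'}+\Delta')\geq 0$ proceeds via the Nakayama--Kawamata--Lai abundance argument on the minimal klt pair $(X',\Delta')$: if $\nu(X',K_{X'}+\Delta')<n$, the canonical bundle formula together with abundance and termination in dimensions $\leq n-1$ forces $K_{X'}+\Delta'$ to be semi\-ample, and in particular $\kappa\geq 0$; if $\nu(X',K_{X'}+\Delta')=n$, then $K_{X'}+\Delta'$ is nef and big, so $\kappa=n$. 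Either way, $\kappa(X,K_X+\Delta)\geq 0$.

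Finally, with $K_X+\Delta$ now $\Q$-linearly equivalent to an effective divisor, termination of every sequence of $(K_X+\Delta)$-flips follows from Birkar's theorem on termination of flips for effective klt pairs, which uses the assumed termination in dimensions $\leq n-1$ as input. The main obstacles are the handling of the uniruled sub-case when invoking the Weak Generalised Nonvanishing Conjecture and the bridge from numerical to linear equivalence in the upgrade step; both rely crucially on the canonical bundle formula together with the standing abundance hypothesis in lower dimensions.
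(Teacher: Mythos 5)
Your top-level strategy (pass to a minimal model, split on nef dimension, invoke Theorem \ref{thm:B} when the nef dimension drops, and the Weak Generalised Nonvanishing Conjecture when it is maximal, then deduce termination from \cite[Corollary 1.2]{HMX14}) matches the paper, but two of your steps have genuine gaps.

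First, your ``delicate upgrade'' from num-effectivity of $K_{X'}+\Delta'$ to $\kappa(X',K_{X'}+\Delta')\geq 0$ is circular as written. The Nakayama--Kawamata--Lai reduction of abundance to lower dimensions along the Iitaka fibration requires $\kappa\geq 0$ as \emph{input} in order to have an Iitaka fibration at all, and proving that a nef divisor with $\nu<n$ on a minimal klt $n$-fold is semiample \emph{is} the Abundance Conjecture in dimension $n$, which is not among the hypotheses. Nothing this heavy is needed: once $K_{X'}+\Delta'\equiv D\geq 0$ for a klt pair, $\kappa(X',K_{X'}+\Delta')\geq 0$ follows directly from \cite[Theorem 0.1]{CKP12}, and this is precisely what the paper uses. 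This also removes the need for your case split on $\nu$.

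Second, your treatment of the uniruled case is only a sketch and does not obviously work. An MRC ``detour'' would require controlling a canonical-bundle-formula-type positivity for $K_{X'}+\Delta'$ over the MRC base, but nothing forces $(K_{X'}+\Delta')|_F$ to be trivial or even to admit a canonical bundle formula along the MRC fibration, so the reduction to a lower-dimensional non-uniruled base is not set up. What the paper does instead is elementary: pass to a terminalisation of $(X',\Delta')$; since a terminal uniruled variety has $K_X$ not pseudoeffective (by \cite{BDPP}), one is in the hypotheses of \cite[Theorem 3.3]{DL15}, which directly gives $\kappa(X,K_X+\Delta)\geq 0$ without any fibration argument. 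You should replace the MRC detour with this terminalisation step.

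The remaining ingredients in your write-up are essentially correct and align with the paper: existence of minimal models lets you assume $K_X+\Delta$ is nef; Theorem \ref{thm:B} with $L=0$ handles $n(X,K_X+\Delta)<n$; the Weak Generalised Nonvanishing Conjecture (which you may apply verbatim once $X$ is not uniruled) handles the maximal nef dimension, non-uniruled case; and termination for effective klt pairs via \cite[Corollary 1.2]{HMX14} finishes the argument, which in the paper is actually invoked at the very start to reduce the entire lemma to proving $\kappa(X,K_X+\Delta)\geq 0$.
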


\begin{proof}
By \cite[Corollary 1.2]{HMX14} it suffices to show that $\kappa(X,K_X+\Delta)\geq0$. By our assumptions, there exists a minimal model of $(X,\Delta)$. Hence, replacing $(X,\Delta)$ by this minimal model, we may assume that $K_X+\Delta$ is nef. 

By Theorem \ref{thm:B} and by \cite[Theorem 0.1]{CKP12} we have $\kappa(X,K_X+\Delta)\geq0$ if $n(X,K_X+\Delta)<n$. Therefore, we may assume that $n(X,K_X+\Delta)=n$. If $X$ is not uniruled, then we conclude by the Weak Nonvanishing Conjecture. If $X$ is uniruled, by passing to a terminalisation of $(X,\Delta)$ we may assume that $(X,\Delta)$ is terminal. Then $K_X$ is not pseudoeffective, and we conclude that $\kappa(X,K_X+\Delta)\geq0$ by \cite[Theorem 3.3]{DL15}.
\end{proof}

\subsection{Achieving nefness of $K_X+\Delta+L$}\label{subsec:nefness} 

The first step is to achieve the nefness of $K_X+\Delta+L$. 

\begin{pro}\label{pro:contocon}
Let $(X,\Delta)$ be a projective klt pair of dimension $n$ such that $K_X+\Delta$ is pseudoeffective. Assume that any sequence of flips of the pair $(X,\Delta)$ terminates. Let $L$ be a nef Cartier divisor on $X$ and let $m>2n$ be a positive integer. Then there exists an $L$-trivial $(K_X+\Delta)$-MMP $\varphi\colon X\dashrightarrow Y$ such that $K_Y+\varphi_*\Delta+m\varphi_*L$ is nef.
\end{pro}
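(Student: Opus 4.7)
My plan is to run a $(K_X+\Delta+mL)$-MMP and show that, because $m>2n$, every step it takes is automatically a $(K_X+\Delta)$-negative contraction or flip on which $L$ is numerically trivial. The resulting MMP will then be the desired $L$-trivial $(K_X+\Delta)$-MMP, and termination of flips (our hypothesis) will yield the nefness of $K_Y+\varphi_*\Delta+m\varphi_*L$ at the output model $Y$.

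The core of the argument is the length of extremal rays. At any intermediate stage $(X',\Delta')$ of the MMP, with $L'$ the pushforward of $L$ (which I will maintain to be nef and Cartier along the way), take a $(K_{X'}+\Delta'+mL')$-negative extremal ray $R'$. Since $L'$ is nef, $R'$ is also $(K_{X'}+\Delta')$-negative, so by the cone theorem it is spanned by a rational curve $C'$ with $-(K_{X'}+\Delta')\cdot C'\leq 2n$. The estimate
$$0>(K_{X'}+\Delta'+mL')\cdot C'\geq -2n+m(L'\cdot C')$$
forces $L'\cdot C'<2n/m<1$; combined with $L'$ nef and Cartier (so $L'\cdot C'\in\Z_{\geq 0}$), this gives $L'\cdot C'=0$. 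Hence every $(K_{X'}+\Delta'+mL')$-negative extremal ray is $L'$-trivial and $(K_{X'}+\Delta')$-negative, and the step associated to it is an $L$-trivial step of a $(K_X+\Delta)$-MMP.

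To iterate, I need the pushforward of $L$ to remain nef \emph{and} Cartier after each step. If $\psi'\colon X'\to Z'$ is the divisorial or small contraction of such an $L'$-trivial ray, the contraction theorem, applied to the nef Cartier divisor $L'$ with $L'\cdot R'=0$, yields $L'=\psi'^*L_{Z'}$ for a Cartier divisor $L_{Z'}$ on $Z'$; a projection-formula check shows $L_{Z'}$ is nef. In the divisorial case, $L_{Z'}$ is the new pushforward; in the flipping case, $\psi'^{+*}L_{Z'}$ on the flip $X'^+$ plays this role, and is again Cartier and nef. Termination of the resulting MMP then follows from the termination hypothesis for flips of $(X,\Delta)$ together with the standard fact that divisorial contractions terminate by dropping the Picard number. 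At the output $Y$, the absence of $(K_Y+\varphi_*\Delta+m\varphi_*L)$-negative extremal rays gives the required nefness.

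The most delicate bookkeeping is the preservation of the Cartier (not merely $\Q$-Cartier) property of the pushforward through flips, since the length-of-extremal-rays estimate crucially uses integrality of $L'\cdot C'$. This is handled by the exactness of the relative Picard sequence for an extremal contraction, which places any Cartier $L'$ with $L'\cdot R'=0$ in the image of $\Pic(Z')\to\Pic(X')$; pulling back via $\psi'^+$ in the flipping case then preserves the Cartier class on the flip.
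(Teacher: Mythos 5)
Your proof is correct and follows essentially the same route as the paper, which delegates to \cite[Proposition 4.1]{LP18a}: the key ingredient in both is the Kawamata length bound $-(K_{X'}+\Delta')\cdot C'\le 2n$ combined with $m>2n$ and integrality of $L'\cdot C'$ to force $L$-triviality of every $(K_{X'}+\Delta'+mL')$-negative extremal ray, together with descent of $L'$ to a Cartier divisor on the contraction target via \cite[Theorem 3.7(4)]{KM98}. The one point worth making explicit is that the output $Y$ cannot be a Mori fibre space: $K_X+\Delta+mL$ is pseudoeffective and this persists along the MMP, so a fibre-type contraction is impossible and the terminal object is indeed a minimal model.
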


\begin{proof}
Apart from the different assumptions, this is \cite[Proposition 4.1]{LP18a}. The only thing to note is that the proof of that result uses the termination of flips only for the given pair $(X,\Delta)$, and not for all klt pairs.
\end{proof}

Therefore, we obtain:

\begin{pro} \label{pro:reduction}
Assume the Weak Nonvanishing Conjecture in dimension $n$, the existence of minimal models of klt pairs in dimension $n$, the Abundance Conjecture (for klt pairs) in dimensions at most $n-1$, and the termination of flips in dimension $n-1$. 

Assume Theorem \ref{thm:B} in dimension $n$. In order to prove Theorem \ref{thm:mainreduction}, it suffices to assume that $L$ is Cartier and $K_X + \Delta + L$ is nef. 
\end{pro}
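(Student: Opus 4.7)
The plan is to run an $L$-trivial $(K_X+\Delta)$-MMP to achieve both reductions (Cartier and nefness of $K+\Delta+L$) simultaneously, and then transport num-effectivity back along this MMP.

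First, I would replace $L$ by a sufficiently divisible positive integer multiple to make it Cartier. This is harmless since $K_X+\Delta+tL$ is num-effective if and only if $K_X+\Delta+(t/m)(mL)$ is num-effective, and both conclusions (i) and (ii) of Theorem \ref{thm:mainreduction} are stable under this rescaling by adjusting $t_0$ accordingly.

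The hypotheses of this proposition are exactly those of Lemma \ref{lem:termination}, hence every sequence of flips of $(X,\Delta)$ terminates. Thus Proposition \ref{pro:contocon} applies, and for any fixed integer $m>2n$ it yields an $L$-trivial $(K_X+\Delta)$-MMP $\varphi\colon X\dashrightarrow Y$ such that $K_Y+\varphi_*\Delta+m\varphi_*L$ is nef. The MMP preserves the klt property of the pair and the pseudoeffectivity of $K_X+\Delta$, and because $\varphi$ is $L$-trivial the divisor $\tilde L:=m\varphi_*L$ is nef and Cartier on $Y$. Hence $(Y,\varphi_*\Delta)$ together with $\tilde L$ satisfies the full hypotheses of Theorem \ref{thm:mainreduction} \emph{and} the additional properties that $\tilde L$ is Cartier and $K_Y+\varphi_*\Delta+\tilde L$ is nef, which is precisely the special case we are allowed to assume.

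Invoking that special case produces the desired num-effectivity for $K_Y+\varphi_*\Delta+t\tilde L$ on $Y$, and it remains to descend to $X$. For this, observe that $\varphi$ is $(K_X+\Delta+sL)$-non-positive for every $s\geq0$: on a smooth resolution $(p,q)\colon W\to X\times Y$ of $\varphi$ one has $p^*(K_X+\Delta)\sim_\Q q^*(K_Y+\varphi_*\Delta)+E$ for an effective $q$-exceptional divisor $E$, while $L$-triviality of $\varphi$ gives $p^*L=q^*\varphi_*L$, so summing yields
\[
p^*(K_X+\Delta+sL)\sim_\Q q^*(K_Y+\varphi_*\Delta+s\varphi_*L)+E.
\]
Lemma \ref{lem:MMPnumeff} then transfers num-effectivity from $Y$ to $X$, and after matching the parameter ranges through the factor $m$ we recover both statements (i) and (ii) of Theorem \ref{thm:mainreduction}. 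The main potential obstacle is simply the verification that the pair produced by Proposition \ref{pro:contocon} meets all the hypotheses needed to apply the special case; the genuine content of the argument has already been packaged into Lemma \ref{lem:termination}, Proposition \ref{pro:contocon}, and Lemma \ref{lem:MMPnumeff}, and what remains is standard MMP bookkeeping.
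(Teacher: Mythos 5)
Your proposal is correct and follows essentially the same route as the paper: invoke Lemma \ref{lem:termination} to get termination of flips for $(X,\Delta)$, apply Proposition \ref{pro:contocon} to produce an $L$-trivial $(K_X+\Delta)$-MMP making $K_Y+\varphi_*\Delta+m\varphi_*L$ nef, and transfer num-effectivity back via Lemma \ref{lem:MMPnumeff}. Your explicit verification that $\varphi$ is $(K_X+\Delta+sL)$-non-positive is a harmless additional detail that the paper leaves implicit.
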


\begin{proof} 
First note that any sequence of flips of the pair $(X,\Delta)$ terminates by Lemma \ref{lem:termination}. Then by Proposition \ref{pro:contocon} there exists an $L$-trivial $(K_X+\Delta)$-MMP $\varphi\colon X\dashrightarrow Y$ such that $K_Y+\varphi_*\Delta+m\varphi_*L$ is nef for some fixed $m\gg0$. Denote $\Delta_Y:=\varphi_*\Delta$ and $L_Y:=m\varphi_*L$. Then $K_Y+\Delta_Y+L_Y$ is nef and by Lemma \ref{lem:MMPnumeff} it suffices to show that $K_Y+\Delta_Y+tL_Y$ is num-effective for every $t\geq0$.
\end{proof} 

\subsection{Non-maximal nef dimension}\label{subsec:nonmaxnefdim}

We first deal with the case of non-maximal nef dimension -- we prove Theorem \ref{thm:B}. More precisely, we have the following inductive step.

\begin{thm}\label{thm:induction}
Assume the termination of flips in dimensions at most $n-1$, the Abundance Conjecture in dimensions at most $n-1$, and the Weak Generalised Nonvanishing Conjecture in dimensions at most $n-1$.

Then Theorem \ref{thm:mainreduction} in dimension $n-1$ implies Theorem \ref{thm:B} in dimension $n$.
\end{thm}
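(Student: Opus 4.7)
The plan is to reduce to the inductive hypothesis (Theorem \ref{thm:mainreduction} in dimension $n-1$) via the nef reduction of $D := K_X+\Delta+L$. Since $n(X,D) < n$, Theorem \ref{thm:nefreduction} produces an almost-holomorphic dominant rational map $\varphi\colon X\dashrightarrow Y$ with $\dim Y\leq n-1$ along which $D$ is numerically trivial on general fibers. After taking a resolution of indeterminacies together with a $\Q$-factorialisation of $X$, and absorbing exceptional divisors via Lemma \ref{lem:MMPnumeff}, we may assume $\varphi = f\colon X\to Y$ is a morphism. On a general fiber $F$ (with $\dim F\leq n-1$), adjunction together with $D|_F\equiv 0$ gives $K_F+\Delta_F\equiv -L|_F$, with $L|_F$ nef.

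We split into two cases. If $F$ is non-uniruled, so $K_F+\Delta_F$ is pseudoeffective, then the Abundance Conjecture in dimension $\leq n-1$ yields an effective $\Q$-divisor $E\sim_\Q K_F+\Delta_F$; then $E+L|_F\equiv 0$ with $E\geq 0$ and $L|_F$ nef forces $E\equiv 0$ and $L|_F\equiv 0$ (by intersecting with $H^{\dim F - 1}$ for an ample $H$ on $F$). In particular $K_X+\Delta$ is numerically trivial over $f$, so Lemma \ref{lem:numlintrivial} gives $K_X+\Delta\sim_{f,\Q} 0$; combining this with $D\equiv_f 0$ and the canonical bundle formula for $f$, we obtain, after a further birational modification, a klt pair $(Y,\Delta_Y)$ with $K_Y+\Delta_Y$ pseudoeffective and a nef $\Q$-divisor $L_Y$ on $Y$ such that
\[ K_X+\Delta\sim_\Q f^*(K_Y+\Delta_Y)\quad\text{and}\quad L\sim_\Q f^*L_Y. \]
If $F$ is uniruled, we run a relative $K_X$-MMP over $Y$, which terminates by the assumption of termination of flips in dimension $\leq n-1$; following the strategy of \cite{DL15}, this MMP can be arranged so that after finitely many steps, the resulting birational model is non-uniruled over $Y$, which reduces us to the previous case.

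The inductive hypothesis (Theorem \ref{thm:mainreduction} in dimension $n-1$) applied to $(Y,\Delta_Y)$ and $L_Y$ then implies that $K_Y+\Delta_Y+tL_Y$ is num-effective on $Y$ for every $t\geq 0$. Pulling back along $f$ and applying Lemma \ref{lem:MMPnumeff} to the birational modifications performed along the way, we conclude that $K_X+\Delta+tL$ is num-effective on $X$ for all $t\geq 0$. The main obstacle is the uniruled-fiber case, where the relative MMP over $Y$ must be carried out in a manner compatible with $L$ so that the final birational model still admits a nef descent of $L$ to $Y$; this parallels the delicate arguments in \cite[Theorem 5.3]{LP18a} and requires careful tracking of how each MMP step interacts with the nef divisor $L$, as well as ensuring that the pseudoeffectivity of $K_X+\Delta$ is preserved throughout.
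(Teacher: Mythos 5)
Your high-level strategy — use the nef reduction to reach a lower-dimensional base and then invoke the inductive hypothesis — is the right one, and your identification of the key facts (both $K_X+\Delta$ and $L$ are numerically trivial on general fibres of the nef reduction; $K_X+\Delta$ should descend to a klt adjoint divisor on the base) matches the paper's. But the execution has several genuine gaps.

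First, the uniruled/non-uniruled case split on the general fibre $F$ is not a sound structure for this argument. In your ``uniruled $F$'' case you propose running a relative $K_X$-MMP over $Y$ so that ``after finitely many steps, the resulting birational model is non-uniruled over $Y$.'' This step would fail: running an MMP does not change whether a variety is uniruled, and since $K_X+\Delta$ (not $K_X$) is the divisor that is pseudoeffective, there is no reason a $K_X$-MMP over $Y$ terminates at a minimal model, let alone one with non-uniruled fibres. The paper avoids this entirely: after the nef reduction, both $(K_X+\Delta)$ and $L$ are numerically trivial on a general fibre, and after passing to a resolution one only retains $\nu\big(\widehat F,(K_{\widehat X}+\widehat\Delta)|_{\widehat F}\big)=0$ (via Lemma~\ref{lem:1}); this alone, without any uniruledness distinction, guarantees that the relative $(K_X+\Delta)$-MMP over $Y$ terminates at a relative good model (by \cite[Corollaire~3.4]{Dru11}, \cite[Corollary~V.4.9]{Nak04}, \cite[Theorem~2.12]{HX13}). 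No case split is needed.

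Second, you do not explain how to produce a nef $\Q$-divisor $L_Y$ on the base such that $L \equiv f^*L_Y$. Having $L|_F \equiv 0$ on a general fibre does not automatically descend $L$ to the base; the paper invokes \cite[Lemma~3.1]{LP18a} after a careful birational modification to achieve exactly this, and this is not a corollary of the canonical bundle formula. Your phrase ``combining this with $D\equiv_f 0$ and the canonical bundle formula \dots we obtain, after a further birational modification'' skips over what is in fact one of the central technical points.

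Third, you want to write $K_X+\Delta\sim_\Q f^*(K_Y+\Delta_Y)$ directly on the base $Y$. But after resolving indeterminacies of the nef reduction, $K_{\widehat X}+\widehat\Delta$ is no longer $f$-numerically trivial — only $\nu=0$ on fibres. The paper therefore first runs a relative $(K_{\widehat X}+\widehat\Delta)$-MMP over $Y$ and then passes to the \emph{relative canonical model} $\tau\colon X_{\min}\to T$ over $Y$; the adjunction formula \cite[Theorem~0.2]{Amb05a} is applied over $T$, not over $Y$. Descending directly to $Y$ as you propose doesn't go through without this intermediate step.

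Finally, a minor point: your argument that $E+L|_F\equiv0$ with $E\geq0$ and $L|_F$ nef forces $E\equiv0$ and $L|_F\equiv0$ ``by intersecting with $H^{\dim F-1}$'' only shows that these intersection numbers vanish, which does not immediately yield numerical triviality of $L|_F$. The correct (and cleaner) argument is via the pointedness of the pseudoeffective cone, which is how \cite[Lemma~2.8]{LP18a} proceeds.
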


\begin{proof}
Let $(X,\Delta)$ be a projective klt pair of dimension $n$ with $K_X+\Delta$ pseudoeffective, and let $L$ be a nef Cartier divisor on $X$ such that $K_X+\Delta+L$ is nef and $n(X,K_X+\Delta+L)<n$. We need show that $K_X+\Delta+tL$ is num-effective for every $t\geq0$.

\medskip

\emph{Step 1.}
In this step we show that we may assume the following:

\medskip

\emph{Assumption 1.} There exist a morphism $\varphi\colon X\to Y$ to a smooth projective variety $Y$ with $\dim Y<\dim X$ and a nef $\Q$-divisor $L_Y$ on $Y$ such that $L\equiv \varphi^*L_Y$ and $\nu\big(X,(K_X+\Delta)|_F\big)=0$ for a general fibre $F$ of $\varphi$. {\it However, we may not any more assume that $K_X+\Delta+L$ is nef.}

\medskip

To this end, first note that by passing to a small $\Q$-factorialisation and by \cite[Lemma 2.14]{LP18a}, we may assume first that $X$ is $\Q$-factorial. Fix a positive integer $m>2n$. Then 
$$n(X,K_X+\Delta+mL)<n$$ 
by Lemma \ref{lem:nefredsum}. Let 
$$\varphi\colon X\dashrightarrow Y$$
be the nef reduction of $K_X+\Delta+mL$. Then $\dim Y = n(X,K_X+\Delta+mL) <n$, and $K_X+\Delta+mL$ is numerically trivial on a general fibre of $\varphi$. By \cite[Lemma 2.8]{LP18a} this implies that both $K_X+\Delta$ and $L$ are numerically trivial on a general fibre of $\varphi$. 

Recall that $\varphi$ is almost holomorphic. Let $(\widehat\pi,\widehat\varphi)\colon\widehat{X}\to X\times Y$ be a smooth resolution of indeterminacies of the map $\varphi$. Write 
$$K_{\widehat{X}}+\widehat{\Delta}\sim_\Q\widehat{\pi}^*(K_X+\Delta)+\widehat{E},$$
where $\widehat{\Delta}$ and $\widehat{E}$ are effective $\Q$-divisors without common components. Then if $\widehat F$ is a general fibre of $\widehat \varphi$, then $\nu\big(\widehat F,(K_{\widehat{X}}+\widehat{\Delta})|_{\widehat{F}}\big)=0$ by Lemma \ref{lem:1}, and $\widehat{L}:=\widehat{\pi}^*L$ is numerically trivial on $\widehat{F}$. 

By \cite[Lemma 3.1]{LP18a} there exist a birational morphism $\pi\colon Y'\to Y$ from a smooth projective variety $Y'$, a nef $\Q$-divisor $L_{Y'}$ on $Y'$, a smooth projective variety $X'$ and a commutative diagram
\[
\xymatrix{ 
X' \ar[d]_{\varphi'} \ar[r]^{\pi'} & \widehat X  \ar[d]^{\widehat\varphi}\\
Y' \ar[r]_{\pi} & Y
}
\]
such that, if we set $L':=\pi'^*\widehat L$, then
$$L'\equiv \varphi'^*L_{Y'}.$$
We may write
$$K_{X'}+\Delta'\sim_\Q\pi'^*\big(K_{\widehat{X}}+\widehat\Delta\big)+E',$$
where $\Delta'$ and $E'$ are effective $\Q$-divisors on $X'$ without common components. Then if $F'$ is a general fibre of $\varphi'$, then $\nu\big(F',(K_{X'}+\Delta')|_{F'}\big)=0$ by Lemma \ref{lem:1}. Since
$$K_{X'}+\Delta'+tL'\sim_\Q\pi'^*\widehat{\pi}^*(K_X+\Delta+tL)+\pi'^*\widehat{E}+E'$$
for $t\geq0$, and since $X$ is $\Q$-factorial, it suffices to show that $K_{X'}+\Delta'+tL'$ is num-effective for any $t\geq0$. 

Therefore, by replacing $X$ by $X'$, $\Delta$ by $\Delta'$, $L$ by $L'$, and $\varphi$ by $\varphi'$, we achieve Assumption 1.

\medskip

\emph{Step 2.}
We now run any relative $(K_X+\Delta)$-MMP with scaling of an ample divisor over $Y$, and note that this MMP is $L$-trivial by Assumption 1. Since $\nu\big(X,(K_X+\Delta)|_F\big)=0$ for a general fibre $F$ of $\varphi$ by Assumption 1, the pair $(F,\Delta|_F)$ has a good model by \cite[Corollaire 3.4]{Dru11} and \cite[Corollary V.4.9]{Nak04}. Therefore, by \cite[Theorem 2.12]{HX13}, this MMP terminates with a relative good model $X_{\min}$ of $(X,\Delta)$ over $Y$. Denote by 
$$\theta\colon X\dashrightarrow X_{\min}$$ 
the corresponding birational morphism and by 
$$\varphi_{\min}\colon X_{\min}\to Y$$
the induced morphism to $Y$. Set $\Delta_{\min}:=\theta_*\Delta$ and $L_{\min}:=\theta_*L$. Let 
$$\tau\colon X_{\min}\to T$$ 
be the map to the relative canonical model $T$ over $Y$, with induced morphism $\varphi_T\colon T\to Y$, yielding the commutative diagram 
\[
\xymatrix{ 
X \ar[dr]_{\varphi} \ar@{-->}[rr]^{\theta} && X_{\min} \ar@/^0.5pc/[d]^{\tau} \ar[dl]^{\varphi_{\min}}\\
& Y\ar@{<-}[r]_{\varphi_T} & T.
}
\]

Notice that $\dim T<n$ by Assumption 1. Moreover, there exists a $\varphi_T$-ample $\Q$-divisor $A$ on $T$ such that $K_{X_{\min}}+\Delta_{\min}\sim_\Q\tau^*A$. By \cite[Theorem 0.2]{Amb05a} there exists an effective $\Q$-divisor $\Delta_T$ on $T$ such that the pair $(T,\Delta_T)$ is klt and such that 
$$K_{X_{\min}}+\Delta_{\min}\sim_\Q\tau^*(K_T+\Delta_T).$$ 
Therefore, since $\theta$ is $L$-trivial as noted above, for any $t\geq0$ we have
$$K_{X_{\min}}+\Delta_{\min}+tL_{\min}\sim_\Q\tau^*(K_T+\Delta_T+t\varphi_T^*L_Y).$$
By the assumptions of the theorem and since we assume Theorem \ref{thm:mainreduction} in dimension $n-1$, the Generalised Nonvanishing Conjecture holds in dimensions at most $n-1$. Therefore, for each $t\geq0$ the divisor $K_T+\Delta_T+t\varphi_T^*L_Y$ is num-effective, and therefore, the previous equation shows that  for every $t\geq0$ the divisor $K_{X_{\min}}+\Delta_{\min}+tL_{\min}$ is num-effective. But then each $K_X+\Delta+tL$ is num-effective by Lemma \ref{lem:MMPnumeff}.
\end{proof}

\subsection{An auxiliary result}

\begin{thm}\label{thm:Support}
Assume the termination of flips in dimensions at most $n-1$, and the Weak Generalised Nonvanishing Conjecture in dimension $n$. 

Let $(X,\Delta)$ be an $n$-dimensional projective $\Q$-factorial terminal pair such that $K_X+\Delta$ is pseudoeffective, and let $L$ be a nef $\Q$-divisor on $X$. Assume that there exists an effective $\Q$-divisor $D$ such that
$$K_X + \Delta \sim_\Q D \geq 0 \quad \text{and} \quad \Supp\Delta\subseteq\Supp D.$$
Then there exists a rational $t_0>0$ such that $K_X+\Delta+tL$ is num-effective for every $0<t\leq t_0$.
\end{thm}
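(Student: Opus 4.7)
Since $K_X+\Delta\sim_\Q D$, showing num-effectivity of $K_X+\Delta+tL$ is equivalent to that of $D+tL$. Following the strategy of \cite{DL15}, the plan is to exploit the hypothesis $\Supp\Delta\subseteq\Supp D$ to perturb into a situation where the Weak Generalised Nonvanishing Conjecture becomes applicable, and then descend the conclusion via an $L$-non-positive MMP.

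I would begin by observing that for each sufficiently small rational $\lambda>0$, the pair $(X,\Delta+\lambda D)$ is klt (the support condition keeps all coefficients below $1$), and the adjoint
\[
K_X+\Delta+\lambda D\sim_\Q(1+\lambda)D
\]
is effective. Adding a further small ample perturbation to the boundary makes this adjoint big, so by \cite{BCHM} one can run a $(K_X+\Delta+\lambda D)$-MMP with scaling of $L$; this MMP terminates, and the scaling is arranged so that every extremal ray contracted along the way is $L$-trivial. The resulting birational contraction $\phi\colon X\dashrightarrow X'$ is thus both $(K_X+\Delta)$-non-positive and $L$-non-positive, $X'$ is $\Q$-factorial klt, $K_{X'}+\Delta'$ is nef where $\Delta'=\phi_*\Delta$, and $L':=\phi_*L$ remains nef on $X'$. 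The support condition $\Supp\Delta'\subseteq\Supp D'$ and the relation $K_{X'}+\Delta'\sim_\Q D'$, with $D'=\phi_*D$, persist because $\phi$ is $(K_X+\Delta)$-non-positive.

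On the minimal model $X'$, I would apply the Weak Generalised Nonvanishing Conjecture: if $n(X',K_{X'}+\Delta'+L')=n$ and $X'$ is not uniruled, it yields num-effectivity of $K_{X'}+\Delta'+sL'$ for all $s\geq 0$, and via Lemma \ref{lem:MMPnumeff} this descends to num-effectivity of $K_X+\Delta+tL$ on $X$ for small $t>0$. The remaining cases, where either $X'$ is uniruled or the nef dimension drops below $n$, are to be handled by dimensional reduction: one uses either the MRC fibration in the uniruled case (appealing to $\kappa(X,K_X+\Delta)\geq 0$ and terminality, in the spirit of \cite[Theorem 3.3]{DL15}), or the nef reduction of $K_{X'}+\Delta'+L'$ in the low nef dimension case (Theorem \ref{thm:nefreduction}), and applies termination of flips in dimensions at most $n-1$ together with the preserved support condition on the base to conclude.

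The main obstacle is twofold. First, arranging the MMP so that the contracted extremal rays are simultaneously $(K_X+\Delta+\lambda D)$-negative and $L$-trivial requires the scaling by $L$, the \cite{BCHM} termination (available since the perturbed adjoint is big), and a careful analysis of the Shokurov polytope to pick the correct rays. Second, verifying that the pair $(X',\Delta')$ satisfies the hypotheses of the Weak Generalised Nonvanishing Conjecture --- non-uniruledness of $X'$ and maximal nef dimension of $K_{X'}+\Delta'+L'$ --- is nontrivial, and it is precisely the hypothesis $\Supp\Delta\subseteq\Supp D$ together with terminality of $(X,\Delta)$ that provides the positivity needed to propagate these conditions across the MMP.
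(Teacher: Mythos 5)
Your plan misses the central technical device of the paper's proof: a cyclic covering. The paper first passes to a log resolution to obtain a log smooth pair $(X',\Delta')$ with $\Delta'$ \emph{reduced} and $\Supp\Delta'=\Supp D'$, where $K_{X'}+\Delta'\sim_\Q D'\geq 0$; it then takes the normalisation $X''$ of the $m$-fold cyclic cover ramified along $mD'$. The whole point of the construction is that a discrepancy computation (using \cite[Proposition 2.14]{DL15} together with the reducedness of $\Delta'$) yields $K_{X''}\sim_\Q G_W\geq 0$ for an explicit integral Weil divisor $G_W$, whence $\kappa(X'',K_{X''})\geq 0$ and in particular $X''$ is not uniruled. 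That is exactly what makes the Weak Generalised Nonvanishing Conjecture applicable after running an MMP, and it is also what feeds into \cite[Corollary 1.2]{HMX14} to give the required termination of flips. Without the covering, you have no mechanism at all to convert to a non-uniruled variety.

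The consequence is that your fallback "remaining cases" cannot be closed under the hypotheses of the theorem. Theorem \ref{thm:Support} assumes only termination of flips in dimensions at most $n-1$ and the Weak Generalised Nonvanishing Conjecture in dimension $n$; it does \emph{not} assume the Abundance Conjecture in lower dimensions, the Generalised Nonvanishing Conjecture in lower dimensions, or the existence of minimal models in dimension $n$. Your proposed handling of the uniruled case via an MRC fibration and of the low nef-dimension case via nef reduction would require precisely those extra inductive hypotheses (compare Theorem \ref{thm:B}, Lemma \ref{lem:Knotpsef}, Theorem \ref{thm:induction2}, which are the places in the paper where such reductions are performed, with their full sets of extra assumptions). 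In the logical architecture of the paper, Theorem \ref{thm:Support} is an ingredient used to prove those later reductions, so one cannot appeal to them here.

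Two smaller issues. First, your observation that $(X,\Delta+\lambda D)$ is klt uses the support hypothesis only superficially; the paper uses $\Supp\Delta\subseteq\Supp D$ in an essential way to arrange a \emph{reduced} log smooth boundary before taking the cyclic cover, so that all discrepancies over $X'$ are integers and the cover is klt. Second, adding a small ample $\epsilon H$ so that you can terminate an MMP by \cite{BCHM} changes the divisor you are minimising: you would end up with a minimal model of $K_X+\Delta+\lambda D+\epsilon H$, not of $K_X+\Delta$, so the resulting contraction need not be $(K_X+\Delta)$-non-positive, and your descent via Lemma \ref{lem:MMPnumeff} would not apply. The paper instead obtains termination from \cite[Corollary 1.2]{HMX14}, which again relies on the lower-dimensional termination assumption together with $\kappa(X'',K_{X''})\geq 0$ established via the cover.
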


\begin{proof}
We follow the arguments of \cite{DL15}. 

\medskip

\emph{Step 1.}
We need to show that there exists a rational $t_0>0$ such that
\begin{equation}\label{eq:304}
D+tL\text{ is num-effective for every }0<t\leq t_0.
\end{equation}
Let $f\colon X'\to X$ be a log resolution of the pair $(X,D)$. Then we may write
\begin{equation}\label{eq:305}
K_{X'}+\Delta_1\sim_\Q f^*(K_X+\Delta)+F',
\end{equation}
where $\Delta_1$ and $F'$ are effective $\Q$-divisors with no common components, and $\Delta_1=f^{-1}_*\Delta$ since $(X,\Delta)$ is a terminal pair. For a rational number $0<\varepsilon\ll1$, denote $\Delta_2:=\Delta_1+\varepsilon(f^*D+F')$ and $D_2:=(1+\varepsilon)(f^*D+F')$. Then the assumptions of the theorem and \eqref{eq:305} give
$$K_{X'}+\Delta_2\sim_\Q D_2\geq0\quad \text{with}\quad \Supp\Delta_2=\Supp D_2,$$
where the pair $(X',\Delta_2)$ is terminal and log smooth. Denote $L':=f^*L$. Then 
$$\textstyle D_2+tL'\sim_\Q (1+\varepsilon)f^*\big(D+\frac{t}{1+\varepsilon}L\big)+(1+\varepsilon)F'.$$
Therefore, since $F'$ is effective and $f$-exceptional, by \eqref{eq:304} it suffices to show that there exists a rational $t_2>0$ such that
\begin{equation}\label{eq:306}
D_2+tL'\text{ is num-effective for every }0<t\leq t_2.
\end{equation}
Set $\Delta':=\lceil\Delta_2\rceil$ and $D':=D_2+\lceil\Delta_2\rceil-\Delta_2$. Then
$$K_{X'}+\Delta'\sim_\Q D'\geq0\quad \text{with}\quad \Supp\Delta'=\Supp D',$$
where the pair $(X',\Delta')$ is log smooth and $\Delta'$ is a reduced divisor. By \eqref{eq:306} and by Lemma \ref{lem:easylemma}, it suffices to show that there exists a rational $t_0'$ such that
\begin{equation}\label{eq:307}
D'+tL'\text{ is num-effective for every }0<t\leq t_0'.
\end{equation}

\medskip

\emph{Step 2.}
Let $m$ be the smallest positive integer such that 
$$m(K_{X'}+\Delta')\sim mD',$$
and denote $G'=mD'$. Let $\pi\colon X''\to X'$ be the normalisation of the corresponding $m$-fold cyclic covering ramified along $G'$. The variety $X''$ is irreducible by \cite[Lemma 3.15(a)]{EV92} since $m$ is minimal. Then there exists an effective Cartier divisor $D_1''$ on $X''$ such that 
\begin{equation}\label{eq:2}
\pi^*G'=mD_1''\quad\text{and}\quad\pi^*(K_{X'}+\Delta')\sim D_1'',
\end{equation}
and let $\Delta_1''=(D_1'')_\textrm{red}$. By the Hurwitz formula, we have
\begin{equation}\label{eq:2a}
K_{X''}+\Delta_1''=\pi^*(K_{X'}+\Delta').
\end{equation}
Pick a rational number $0<\delta\ll1$ such that 
$$\Delta'':=\Delta_1''-\delta\pi^*\Delta'\geq0,\ D'':=D_1''-\delta\pi^*\Delta'\geq0\text{ and } \Supp D''=\Supp D_1''.$$
Then
$$K_{X''}+\Delta''=\pi^*\big(K_{X'}+(1-\delta)\Delta'\big)\quad\text{and}\quad K_{X''}+\Delta''\sim_\Q D'',$$
and therefore, the pair $(X'',\Delta'')$ is klt by \cite[Proposition 5.20]{KM98}. 
Denote $L'':=\pi^*L'$. By \eqref{eq:307} and by \cite[Lemma 2.15]{LP18a} it suffices to show that there exists a rational $t_1''>0$ such that
$$D_1''+tL''\text{ is num-effective for every }0<t\leq t_1''.$$
Since $\Supp D''=\Supp D_1''$, by Lemma \ref{lem:easylemma} it suffices to show that there exists a rational $t_0''>0$ such that
\begin{equation}\label{eq:308}
D''+tL''\text{ is num-effective for every }0<t\leq t_0''.
\end{equation}

\medskip

\emph{Step 3.}
We claim that $X''$ is not uniruled and that $\kappa(X'',K_{X''})\geq0$. Assuming the claim, we show first that it implies the theorem.

The assumptions of the theorem and \cite[Corollary 1.2]{HMX14} then imply that any sequence of flips of the pair $(X'',\Delta'')$ terminates. Therefore, by Proposition \ref{pro:contocon} there exists an $L''$-trivial $(K_{X''}+\Delta'')$-MMP $\varphi\colon X''\dashrightarrow Y$ such that $K_Y+\varphi_*\Delta''+m\varphi_*L''$ is nef for some fixed $m\gg0$. Denote $\Delta_Y:=\varphi_*\Delta''$ and $L_Y:=\varphi_*L''$. By \eqref{eq:308} and by Lemma \ref{lem:MMPnumeff} it suffices to show that there exists a rational $t_0^Y>0$ such that
\begin{equation}\label{eq:3c}
K_Y+\Delta_Y+tL_Y\text{ is num-effective for every }0<t\leq t_0^Y.
\end{equation}

As above, any sequence of flips of the pair $(Y,\Delta_Y)$ terminates. As in Step 1 and the beginning of Step 2 of the proof of \cite[Lemma 5.2]{LP18a}, there is a birational contraction
$$\theta\colon (Y,\Delta_Y)\dashrightarrow (Y_{\min},\Delta_{\min})$$ 
which is a composition of a sequence of operations of a $(K_Y+\Delta_Y)$-MMP, there is a positive rational number $\lambda$ and a divisor $L_{\min}:=\theta_*(mL_Y)$ on $Y_{\min}$ such that:
\begin{enumerate}
\item[(a)] $K_{Y_{\min}}+\Delta_{\min}$ is nef and $s(K_{Y_{\min}}+\Delta_{\min})+L_{\min}$ is nef for all $s\geq\lambda$,
\item[(b)] the map $\theta$ is $\big(s(K_Y+\Delta_Y)+mL)$-negative for $s>\lambda$.
\end{enumerate}

Denote $M:=\lambda(K_{Y_{\min}}+\Delta_{\min})+L_{\min}$; thus $M$ is nef by (a). By the Weak Generalised Nonvanishing Conjecture, the divisor 
$$K_{Y_{\min}}+\Delta_{\min}+tM=t\big((\lambda+1/t)(K_{Y_{\min}}+\Delta_{\min})+L_{\min}\big)$$ 
is num-effective for every $t>0$. Then by (b) and by Lemma \ref{lem:MMPnumeff}, the divisor
$$K_Y+\Delta_Y+tL_Y\quad\text{is num-effective}$$
for all $0<t\leq m/\lambda$. This proves (i) and finishes the proof of the theorem.

\medskip

\emph{Step 4.}
It remains to prove the claim from Step 3. Let $g\colon W\to X''$ be a log resolution of the pair $(X'',\Delta_1'')$. By \eqref{eq:2} and \eqref{eq:2a} we may write
\begin{equation}\label{eq:312}
K_W+\Delta_W\sim_\Q g^*(K_{X''}+\Delta_1'')+E_W\sim g^*D_1''+E_W,
\end{equation}
where $\Delta_W$ and $E_W$ are effective $\Q$-divisors with no common components. 

We claim that $\Delta_W$ and $E_W$ are \emph{integral} divisors. Let $E''$ be a geometric valuation over $X''$. Then by \cite[Proposition 2.14]{DL15}, there exists a geometric valuation $E'$ over $X'$ and an integer $1\leq r\leq m$ such that
$$a(E'',X'',\Delta_1'')+1=r\big(a(E',X',\Delta')+1\big).$$
Since $(X',\Delta')$ is log smooth and $\Delta'$ is reduced, we have $a(E',X',\Delta')\in\Z$, hence $a(E'',X'',\Delta_1'')\in\Z$, which proves the claim.

\medskip

Therefore, the divisor 
$$G_W:=g^*D_1''+E_W-\Delta_W$$ 
is Cartier, and by \eqref{eq:312} we have
$$K_W\sim_\Q G_W.$$
Then it suffices to show that
\begin{equation}\label{eq:309}
G_W\geq0, \quad\text{and in particular,}\quad \kappa(W,K_W)\geq0. 
\end{equation} 
Indeed, this implies that $W$ is not uniruled. Since $K_{X''}\sim_\Q g_*K_W$, the claim from Step 3 follows.

In order to show \eqref{eq:309}, fix a component $S''$ of $\Delta_W$. Then $a(S'',X'',\Delta_1'')=-1$. By \cite[Proposition 2.14]{DL15}, there exists a geometric valuation $S'$ over $X'$ and an integer $1\leq r\leq m$ such that $\pi\big(c_{X''}(S'')\big)=c_{X'}(S')$ and
$$a(S'',X'',\Delta_1'')+1=r\big(a(S',X',\Delta')+1\big).$$
Here, $c_{X''}(S'')$ denotes the center of $S''$ on $X''$, and analogously for $c_{X'}(S')$. This implies $a(S',X',\Delta')=-1$, thus $c_{X'}(S')\subseteq\Supp\Delta'$ because $(X',\Delta')$ is log smooth. From here we obtain $c_{X''}(S'')\subseteq\pi^{-1}(\Supp\Delta')=\Supp D_1''$, and in particular, $S''\subseteq\Supp g^*D_1''$. Therefore, 
$$\mult_{S''}g^*D_1''\geq1=\mult_{S''}\Delta_W.$$
Now \eqref{eq:309} follows by the definition of $G_W$. This finishes the proof.
\end{proof}

\subsection{Non-pseudoeffective canonical class}

\begin{lem}\label{lem:Knotpsef}
Assume the termination of flips in dimensions at most $n-1$, the Abundance Conjecture in dimensions at most $n-1$, the existence of minimal models in dimension $n$, and the Weak Generalised Nonvanishing Conjecture in dimension $n$. 

Assume Theorem \ref{thm:B} in dimension $n$. Let $(X,\Delta)$ be an $n$-dimensional projective $\Q$-factorial terminal pair such that $K_X+\Delta$ is pseudoeffective, and let $L$ be a nef $\Q$-divisor on $X$. Assume that $K_X$ is not pseudoeffective.

Then $\kappa(X,K_X+\Delta)\geq0$ and there exists a rational $t_0>0$ such that $K_X+\Delta+tL$ is num-effective for every $0< t\leq t_0$.
\end{lem}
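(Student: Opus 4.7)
The plan is to prove $\kappa(X,K_X+\Delta)\geq 0$ directly from \cite[Theorem 3.3]{DL15}, and then to reduce the num-effectivity statement to Theorem \ref{thm:Support} via a pseudo-effective threshold argument that arranges the support condition $\Supp\Delta\subseteq\Supp D$ required by that theorem.

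For the first assertion, the hypotheses of \cite[Theorem 3.3]{DL15} are exactly those in the statement: $(X,\Delta)$ is terminal, $K_X+\Delta$ is pseudoeffective, and $K_X$ is not pseudoeffective. Thus $\kappa(X,K_X+\Delta)\geq 0$, and in particular, there exists an effective $\Q$-divisor $D_0$ with $D_0\sim_\Q K_X+\Delta$.

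For the second assertion, introduce the pseudo-effective threshold
\[
\tau:=\inf\{t\in[0,1]\mid K_X+t\Delta\text{ is pseudoeffective}\},
\]
which is attained by the closedness of the pseudo-effective cone and lies in $(0,1]$. When $\tau<1$, the pair $(X,\tau\Delta)$ remains $\Q$-factorial terminal (as $\tau\Delta\leq\Delta$) and satisfies the hypotheses of \cite[Theorem 3.3]{DL15}; that theorem yields an effective $\Q$-divisor $D_\tau\sim_\Q K_X+\tau\Delta$. Setting
\[
D:=D_\tau+(1-\tau)\Delta,
\]
one obtains $D\geq 0$, $D\sim_\Q K_X+\Delta$, and, crucially, $\Supp\Delta\subseteq\Supp D$ because $(1-\tau)>0$. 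All the hypotheses of Theorem \ref{thm:Support} are now in place for the triple $(X,\Delta,L)$ together with the divisor $D$, and that theorem produces the desired $t_0>0$.

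The main obstacle is the boundary case $\tau=1$, in which $K_X+\Delta$ sits on the boundary of the pseudo-effective cone along the $\Delta$-direction and the above construction degenerates. The plan for this case is to first pass to a minimal model $\phi\colon X\dashrightarrow Y$ of $(X,\Delta)$, supplied by the assumed existence of minimal models in dimension $n$; by Lemma \ref{lem:MMPnumeff} it suffices to work on $Y$. If the nef dimension $n(Y,K_Y+\Delta_Y+L_Y)$ is strictly less than $n$, then the num-effectivity of $K_Y+\Delta_Y+tL_Y$ for every $t\geq 0$ follows from Theorem \ref{thm:B}, which is assumed in dimension $n$. Otherwise one perturbs the boundary by a small ample $\Q$-divisor to push the analogous threshold strictly below $1$, returning to the main case above, and uses Lemma \ref{lem:easylemma}\,(b) together with a careful choice of representative to transfer the conclusion back to the original pair. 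Controlling this last transfer -- since num-effectivity is not preserved under subtraction of effective divisors -- is the delicate step of the proof.
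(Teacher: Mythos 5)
Your proof of $\kappa(X,K_X+\Delta)\geq 0$ via \cite[Theorem~3.3]{DL15} is fine, and your handling of the num-effectivity claim in the case $\tau<1$ is correct and even slightly more direct than the paper's: with $D=D_\tau+(1-\tau)\Delta$ and $(1-\tau)>0$, the support condition $\Supp\Delta\subseteq\Supp D$ holds and Theorem~\ref{thm:Support} applies to $(X,\Delta,L)$ without further ado.

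The problem is the boundary case $\tau=1$, which you flag but do not actually close. Your proposed route -- pass to a minimal model, then either invoke Theorem~\ref{thm:B} when the nef dimension drops, or perturb $\Delta$ by a small ample to lower the threshold -- has two issues. First, adding a small ample $A$ does not, in general, force the threshold of $(X,\Delta+\varepsilon A)$ strictly below $1$: $K_X+\Delta+\varepsilon A$ can still lie on the boundary of the pseudoeffective cone, since $A$ points in an unrelated direction. Second, and as you yourself note, even if the perturbed threshold were $<1$, transferring num-effectivity of $K_X+\Delta+\varepsilon A+tL$ back to $K_X+\Delta+tL$ requires subtracting an effective divisor, which is not a legitimate operation for num-effectivity. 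So this step is a genuine gap, not merely a delicacy to be smoothed over later.

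The paper sidesteps the case distinction entirely with a simpler trick: instead of hoping that $(1-\tau)\Delta$ supplies the support, perturb $\Delta$ \emph{upward}. Choose $0<\mu\ll 1$ so that $(X,(1+\mu)\Delta)$ is still terminal and set $\Delta'=(1+\mu)\Delta$. Then $K_X+\Delta'\sim_\Q D+\mu\Delta$ with $\Supp\Delta'=\Supp\Delta\subseteq\Supp(D+\mu\Delta)$ automatically, because the added term $\mu\Delta$ carries the needed support regardless of whether $\tau<1$ or $\tau=1$. Theorem~\ref{thm:Support} then applies to $(X,\Delta')$, producing some $\lambda>0$ and an effective $G$ with $K_X+(1+\mu)\Delta+\lambda L\equiv G$. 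One further perturbation, $\Delta_X:=(1-\delta)\Delta+\tfrac{\delta}{\mu}G$ for $0<\delta\ll 1$, gives a terminal klt pair with
\[
K_X+\Delta_X\equiv\Bigl(1+\tfrac{\delta}{\mu}\Bigr)\Bigl(K_X+\Delta+\tfrac{\delta\lambda}{\delta+\mu}L\Bigr),
\]
and Lemma~\ref{lem:termination} yields $\kappa(X,K_X+\Delta_X)\geq 0$; together with $\kappa(X,K_X+\Delta)\geq 0$ and convexity this gives num-effectivity of $K_X+\Delta+tL$ on a whole interval $0<t\leq t_0$. The moral: the issue was never genuinely about $\tau=1$; it was about manufacturing the support inclusion, which is cheapest to achieve by enlarging $\Delta$ rather than by trying to extract it from $(1-\tau)\Delta$.
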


\begin{proof}
Define
$$\tau:=\min\{t\in\R\mid K_X+t\Delta\text{ is pseudoeffective}\}.$$
Observe that $0<\tau\leq1$ by our assumptions. Then by \cite[Theorem 3.3]{DL15} there exists a $\Q$-divisor $D_\tau\geq0$ such that $K_X+\tau\Delta\sim_\Q D_\tau$. This yields 
$$K_X + \Delta \sim_\Q D \geq 0 , \quad \text{where} \quad D =D _\tau + (1-\tau )\Delta.$$
Pick a rational number $0<\mu\ll1$ such that the pair $\big(X,(1+\mu)\Delta\big)$ is terminal, and denote $\Delta':=(1+\mu)\Delta$. Observe that
$$K_X+\Delta'\sim_\Q D+\mu\Delta\quad\text{and}\quad \Supp\Delta'\subseteq\Supp(D+\mu\Delta).$$
Then by Theorem \ref{thm:Support} there exist a rational number $\lambda>0$ and an effective $\Q$-divisor $G$ such that 
\begin{equation}\label{eq:121a}
K_X+(1+\mu)\Delta+\lambda L\equiv G. 
\end{equation} 

Pick a rational number $0<\delta\ll1$ such that the pair $\big(X,(1-\delta)\Delta+\frac{\delta}{\mu}G\big)$ is terminal, and denote $\Delta_X:=(1-\delta)\Delta+\frac{\delta}{\mu}G$. Then by \eqref{eq:121a} we have
$$\textstyle K_X+\Delta_X\equiv \big(1+\frac{\delta}{\mu}\big)\big(K_X+\Delta+\frac{\delta\lambda}{\delta+\mu}L\big).$$
In particular, $K_X+\Delta_X$ is pseudoeffective. Since then $\kappa(X,K_X+\Delta_X)\geq0$ by Lemma \ref{lem:termination}, the result follows.
\end{proof} 

\subsection{Proofs of Theorems \ref{thm:mainreduction} and \ref{thm:B}}\label{subsec:mainreduction}

We finally have all the ingredients to prove Theorems \ref{thm:mainreduction} and \ref{thm:B}.

\begin{thm}\label{thm:induction2}
Assume the termination of flips in dimensions at most $n-1$, the Abundance Conjecture in dimensions at most $n-1$, the existence of minimal models in dimension $n$, and the Weak Generalised Nonvanishing Conjecture in dimension $n$. 

Then Theorem \ref{thm:B} in dimension $n$ implies Theorem \ref{thm:mainreduction} in dimension $n$.
\end{thm}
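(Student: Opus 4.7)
The plan is to combine Proposition~\ref{pro:reduction} with the two main technical inputs, Theorem~\ref{thm:B} and Theorem~\ref{thm:Support}, organising the argument by the nef dimension of $K_X+\Delta+L$ and by the uniruledness of $X$. First I would apply Proposition~\ref{pro:reduction} to reduce to the situation where $L$ is Cartier and $K_X+\Delta+L$ is nef. The hypotheses of Lemma~\ref{lem:termination} are then satisfied (Theorem~\ref{thm:B} in dimension $n$ is available by assumption), giving $\kappa(X,K_X+\Delta)\geq 0$ and the termination of any sequence of flips of $(X,\Delta)$. Next I would split on the value of $n(X,K_X+\Delta+L)$: if it is strictly less than $n$, Theorem~\ref{thm:B} directly provides num-effectivity of $K_X+\Delta+tL$ for every $t\geq 0$, settling both (i) and (ii) at once.

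Assume now $n(X,K_X+\Delta+L)=n$. I would pass to a $\Q$-factorial terminalisation $\pi\colon X'\to X$ of $(X,\Delta)$, so that $(X',\Delta_{X'})$ is $\Q$-factorial terminal with $K_{X'}+\Delta_{X'}=\pi^*(K_X+\Delta)$, and $L':=\pi^*L$ is nef. Since $K_{X'}+\Delta_{X'}+tL'=\pi^*(K_X+\Delta+tL)$, num-effectivity on $X'$ transfers to num-effectivity on $X$ by the projection formula, so it suffices to work on $X'$. In the uniruled case $K_{X'}$ is not pseudoeffective and Lemma~\ref{lem:Knotpsef}, applied directly to $(X',\Delta_{X'})$ and $L'$, yields $t_0>0$ with $K_{X'}+\Delta_{X'}+tL'$ num-effective for $0<t\leq t_0$, while the case $t=0$ follows from $\kappa\geq 0$. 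In the non-uniruled case $K_{X'}$ is pseudoeffective; Lemma~\ref{lem:termination} applied to the terminal pair $(X',0)$ produces $D_K\geq 0$ with $K_{X'}\sim_\Q D_K$, and Theorem~\ref{thm:Support} applied to $(X',0)$ with $L'$ and $D_K$ (the support condition $\Supp 0\subseteq\Supp D_K$ being vacuous) yields $t_0>0$ with $K_{X'}+tL'$ num-effective for $0<t\leq t_0$. Adding the effective divisor $\Delta_{X'}$ preserves num-effectivity of the sum, which completes part~(i).

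For part~(ii), with the additional semiampleness of Abundance in dimension $n$, I would extend the range $[0,t_0]$ to $[0,\infty)$ via a minimal model argument. A $(K_X+\Delta)$-MMP terminates at a minimal model $(Y,\Delta_Y)$ with $K_Y+\Delta_Y$ nef, and the semiampleness hypothesis together with $\kappa\geq 0$ then makes $K_Y+\Delta_Y$ semiample, inducing a fibration $f\colon Y\to Z$ with $K_Y+\Delta_Y\sim_\Q f^*A$ for some ample $A$ on $Z$. On a general fibre $F$ of $f$, the pair $(F,\Delta_Y|_F)$ is a klt Calabi-Yau pair of dimension smaller than $n$, so the Generalised Nonvanishing Conjecture in lower dimensions (obtained inductively from Theorem~\ref{thm:mainreduction}) yields num-effectivity of $tL_Y|_F$ for every $t\geq 0$. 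The hard part will be coordinating the MMP---combining an $L$-trivial MMP from Proposition~\ref{pro:contocon} with an MMP with scaling to reach $(Y,\Delta_Y)$---so that the restriction $L_Y|_F$ remains nef on general fibres, and then descending from fibrewise to global num-effectivity of $K_Y+\Delta_Y+tL_Y$ on $Y$, which can be transferred back to $X$ via Lemma~\ref{lem:MMPnumeff}.
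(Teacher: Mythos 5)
Your handling of part~(i) is correct and follows a genuinely different (and arguably cleaner) route than the paper's. Both approaches dispose of the case $n(X,K_X+\Delta+L)<n$ via Theorem~\ref{thm:B}. In the maximal nef dimension case, the paper performs a careful MMP (Step~3, as in \cite[Lemma~5.2]{LP18a}) to arrange that $K_X+\Delta$ is nef, because it then applies the Weak Generalised Nonvanishing Conjecture directly in the non-uniruled subcase, and that conjecture explicitly requires $K_X+\Delta$ nef plus maximal nef dimension. You instead pass immediately to a $\Q$-factorial terminalisation and, in the non-uniruled subcase, apply Theorem~\ref{thm:Support} to the pair $(X',0)$, which needs no nefness of $K_{X'}$. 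Since $(X',\Delta_{X'})$ terminal with $\Delta_{X'}\geq0$ implies $X'$ itself terminal, and since Lemma~\ref{lem:termination} applied to $(X',0)$ delivers $K_{X'}\sim_\Q D_K\geq0$ with the support condition vacuous, the hypotheses of Theorem~\ref{thm:Support} are met and you get num-effectivity of $K_{X'}+tL'$ for small $t>0$; adding back $\Delta_{X'}\geq0$ and pushing down (this is exactly \cite[Lemma~2.14]{LP18a} applied to $\pi^*(K_X+\Delta+tL)$, and worth citing explicitly rather than invoking the projection formula informally) finishes~(i). This packages the MMP bookkeeping of the paper's Step~3 into a single application of Theorem~\ref{thm:Support}; what the paper's route buys by comparison is that it never has to separately argue about the boundary at all, because it keeps $\Delta$ built into the pair passed to the Weak GNVC.

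Part~(ii), however, has a genuine gap, and you have flagged it yourself: ``the hard part will be \dots descending from fibrewise to global num-effectivity.'' Descending num-effectivity from the general fibres of the Iitaka fibration of $K_Y+\Delta_Y$ to all of $Y$ is not a routine step; it is precisely the kind of difficulty that makes these conjectures hard, and nothing in the assumed package (termination, Abundance in lower dimensions, Weak GNVC, Theorem~\ref{thm:B}) supplies it. You have missed the key simplification the paper exploits: once one is in the case $n(X,K_X+\Delta+L)=n$, part~(i) produces $N\geq0$ with $K_X+\Delta+t_0 L\equiv N$, and the perturbation $(X,\Delta+\varepsilon N)$ is a klt pair with $\kappa\geq0$; running its MMP and invoking the semiampleness part of Abundance makes the model semiample, while Lemma~\ref{lem:nefredsum} together with Corollary~\ref{cor:nefdim} shows that the nef dimension stays equal to $n$ on the model, so the semiample divisor is in fact big. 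Bigness of $K_X+\Delta+\varepsilon N\equiv(1+\varepsilon)(K_X+\Delta)+\varepsilon t_0 L$, combined with pseudoeffectivity of $K_X+\Delta$ and of $L$, immediately gives bigness (a fortiori num-effectivity) of $K_X+\Delta+tL$ for every $t>0$ with no fibrewise argument at all. Your proposal never uses the maximal nef dimension hypothesis in part~(ii), which is a structural sign that something essential is missing: the fibration route you sketch is exactly what the $n(X,K_X+\Delta+L)<n$ case of Theorem~\ref{thm:B} is designed for, and it does not apply here.
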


\begin{proof}
Let $(X,\Delta)$ be an $n$-dimensional projective klt pair such that $K_X+\Delta$ is pseudoeffective, and let $L$ be a nef $\Q$-divisor on $X$. By Proposition \ref{pro:reduction} and by Lemma \ref{lem:MMPnumeff} we may assume that $K_X+\Delta+L$ is nef.
 
\medskip

\emph{Step 1.}
If $n(X,K_X+\Delta+L)<n$, then we conclude by Theorem \ref{thm:B}.

\medskip

\emph{Step 2.}
Therefore, from now on we may assume that
$$n(X,K_X+\Delta+L)=n.$$
We first prove that in this case part (i) of Theorem \ref{thm:mainreduction} implies part (ii) of Theorem \ref{thm:mainreduction}. Indeed, then there exists an effective $\Q$-divisor $N$ such that 
$$K_X+\Delta+t_0L\equiv N.$$ 
Pick a small positive rational number $\varepsilon$ such that $(X,\Delta+\varepsilon N)$ is klt. We have 
$$\kappa(X,K_X+\Delta+\varepsilon N)\geq0$$
by (i), hence by \cite[Corollary 1.2]{HMX14} there exists a $(K_X+\Delta+\varepsilon N)$-MMP 
$$\varphi\colon X\dashrightarrow Y$$ 
which terminates. The divisor $K_Y+\varphi_*\Delta+\varepsilon \varphi_*N$ is semiample, since we are assuming the semiampleness part of the Abundance Conjecture in dimension $n$. Since $n(X,K_X+\Delta+L)=n$, we have $$(K_X+\Delta+\varepsilon N)\cdot C>0$$ for every curve $C$ on $X$ passing through a very general point on $X$ by Lemma \ref{lem:nefredsum}, and therefore $$n(Y,K_Y+\varphi_*\Delta+\varepsilon\varphi_*N)=n$$ by Corollary \ref{cor:nefdim}. But then $K_Y+\varphi_*\Delta+\varepsilon\varphi_*N$ is big, hence $K_X+\Delta+\varepsilon N$ is also big. As 
$$K_X+\Delta+\varepsilon N\equiv(1+\varepsilon)(K_X+\Delta)+\varepsilon t_0 L,$$
and $K_X+\Delta$ and $L$ are both pseudoeffective, it is easy to see that $K_X+\Delta+tL$ is big for all $t>0$.

\medskip

\emph{Step 3.}
It remains to prove (i). In this step we show that we may additionally assume the following:

\medskip

\emph{Assumption 1.} The pair $(X,\Delta)$ is $\Q$-factorial and terminal, and $K_X+\Delta$ is nef.

\medskip

To this end, let $\pi\colon \widetilde X\to X$ be a small $\Q$-factorialisation. As $K_{\widetilde X}+\pi_*^{-1}\Delta\sim_\Q\pi^*(K_X+\Delta)$, by replacing $X$ by $\widetilde X$, $\Delta$ by $\pi_*^{-1}\Delta$ and $L$ by $\pi^*L$, we may assume that $X$ is $\Q$-factorial by \cite[Lemma 2.14]{LP18a}.

By Lemma \ref{lem:termination}, we have 
$$\kappa(X,K_X+\Delta)\geq0$$
and any sequence of flips of the pair $(X,\Delta)$ terminates. Then as in Step 1 and the beginning of Step 2 of the proof of \cite[Lemma 5.2]{LP18a}, there is a birational contraction
$$\theta\colon (X,\Delta)\dashrightarrow (X_{\min},\Delta_{\min})$$ 
which is a composition of a sequence of operations of a $(K_X+\Delta)$-MMP, a positive rational number $\lambda$ and a divisor $L_{\min}:=\theta_*L$ on $X_{\min}$ such that:
\begin{enumerate}
\item[(a)] $K_{X_{\min}}+\Delta_{\min}$ is nef and $s(K_{X_{\min}}+\Delta_{\min})+L_{\min}$ is nef for all $s\geq\lambda$,
\item[(b)] the map $\theta$ is $\big(s(K_X+\Delta)+L)$-negative for $s>\lambda$,
\item[(c)] $(K_{X_{\min}}+\Delta_{\min}+tL_{\min})\cdot C>0$ for every curve $C$ on $X_{\min}$ passing through a very general point on $X_{\min}$ and for every $t>0$.
\end{enumerate}

Denote 
$$M:=\lambda(K_{X_{\min}}+\Delta_{\min})+L_{\min}.$$ 
Then $M$ is nef by (a) and 
$$n(X_{\min},K_{X_{\min}}+\Delta_{\min}+tM)=n\quad\text{for all }t>0$$
by (c) and by Corollary \ref{cor:nefdim}. Assume that there exists a rational $t_0^{\min}>0$ such that $K_{X_{\min}}+\Delta_{\min}+tM$ is num-effective for every $0<t\leq t_0^{\min}$. Since
$$\textstyle K_{X_{\min}}+\Delta_{\min}+tM=(t\lambda+1)\big(K_{X_{\min}}+\Delta_{\min}+\frac{t}{t\lambda+1} L_{\min}\big),$$ 
the divisor $K_{X_{\min}}+\Delta_{\min}+t L_{\min}$ is num-effective for every $0<t\leq \frac{t_0^{\min}}{t_0^{\min}\lambda+1}$. But then by (b) and by Lemma \ref{lem:MMPnumeff}, the divisor $K_X+\Delta+t L$ is num-effective for every $0<t\leq \frac{t_0^{\min}}{t_0^{\min}\lambda+1}$. 

Therefore, by replacing $X$ by $X_{\min}$, $\Delta$ by $\Delta_{\min}$ and $L$ by $M$, we achieve that the variety $X$ is $\Q$-factorial, and the divisor $K_X+\Delta$ is nef.

Now, let 
$$\xi\colon (X',\Delta')\to (X,\Delta)$$ 
be the composition of a terminalisation and a small $\Q$-factorialisation. Since $K_{X'}+\Delta'\sim_\Q\xi^*(K_X+\Delta)$, it follows that, by replacing $X$ by $X'$, $\Delta$ by $\Delta'$ and $L$ by $\xi^*L$, we may additionally assume that $(X,\Delta)$ is a terminal pair, hence we achieve Assumption 1.

\medskip

\emph{Step 4.}
Now, if $K_X$ is pseudoeffective, then $X$ is not uniruled and Theorem \ref{thm:mainreduction}(i) follows from the Weak Generalised Nonvanishing Conjecture. If $K_X$ is not pseudoeffective, then Theorem \ref{thm:mainreduction}(i) follows from Lemma \ref{lem:Knotpsef}. This completes the proof.
\end{proof}

Theorems \ref{thm:mainreduction} and \ref{thm:B} follow immediately from Theorems \ref{thm:induction} and \ref{thm:induction2}.

\section{A criterion for Nonvanishing}\label{sec:MMP}

In this section we prove a criterion -- Theorem \ref{thm:nonvanishingForms} -- for effectivity of divisors of the form $K_X+\Delta+L$, where $(X,\Delta)$ is a klt pair such that $K_X+\Delta$ is nef and $X$ is not uniruled, and if $L$ is a nef $\Q$-divisor on $X$. It is crucial for the remainder of the paper.

Theorem \ref{thm:nonvanishingForms} contains \cite[Theorem 4.3 and Theorem 8.1]{LP18} as special cases. 

\begin{thm} \label{thm:nonvanishingForms}
Assume the existence of good models for klt pairs in dimensions at most $n-1$. 

Let $(X,\Delta)$ be a projective terminal pair of dimension $n$ such that $K_X$ pseudoeffective and $K_X+\Delta$ is nef. Let $L$ be a nef Cartier divisor on $X$ and let $t$ be a positive integer such that $t(K_X+\Delta)$ is Cartier. Denote $M=t(K_X+\Delta)+L$, and let $\pi\colon Y\to X$ be a resolution of $X$. Assume that for some positive integer $p$ we have  
$$ H^0\big(Y,(\Omega^1_Y)^{\otimes p} \otimes \OO_Y(m\pi^*M)\big) \neq 0$$
for infinitely many integers $m$. Then $\kappa (X,M) \geq 0$. 
\end{thm}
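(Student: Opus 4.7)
The proof strategy follows the template of \cite[Theorems 4.3 and 8.1]{LP18}, of which this statement is a strict generalisation. The two essential ingredients are the birational stability of the cotangent sheaf from \cite{CP15} --- available because the pseudoeffectivity of $K_X$ forces $X$ to be non-uniruled --- together with the MMP twist of Theorem \ref{thm:MMPtwist}, which is what requires the nefness of $K_X+\Delta$.

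First, I would translate the hypothesis into a sheaf-theoretic statement on $Y$. A nonzero section $s \in H^0\bigl(Y,(\Omega^1_Y)^{\otimes p}\otimes \OO_Y(m\pi^*M)\bigr)$ is the same as a nonzero morphism $\OO_Y(-m\pi^*M) \to (\Omega^1_Y)^{\otimes p}$; saturating its image yields a reflexive subsheaf $\mathcal{F}_m \subseteq (\Omega^1_Y)^{\otimes p}$ containing $\OO_Y(-m\pi^*M)$. Since $X$ is not uniruled, the Campana--P\u{a}un theorem gives that $\Omega^1_Y$ is generically nef with respect to every movable class $\alpha$ on $Y$, and in fact every torsion-free quotient of $(\Omega^1_Y)^{\otimes p}$ has non-negative $\alpha$-slope on a minimal model. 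Applied to the quotient of $(\Omega^1_Y)^{\otimes p}$ by $\mathcal{F}_m$ and letting $m$ range through the infinite set supplied by the hypothesis, this produces an asymptotic (in $m$) numerical inequality relating $\pi^*M$ and $K_Y$, which I would repackage as pseudoeffectivity of a natural divisor class on the projective bundle associated to $(\Omega^1_Y)^{\otimes p}$.

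Next, I would use the MMP twist of Theorem \ref{thm:MMPtwist} to convert this numerical output into an actual section. The nefness of $K_X+\Delta$ is what makes this step possible: it allows the construction of a sequence of MMP operations, trivial on $M$ in a controlled way, that produce a birational model of $(X,\Delta)$ where the pseudoeffectivity detected above can be exhibited by an explicit effective representative. The assumption on good models in dimensions $\leq n-1$ enters here in the standard way, through a canonical bundle formula applied to an Iitaka or semiample fibration arising during the MMP: on a general fibre, the restriction of $M$ satisfies $\kappa(F, M|_F) \geq 0$ by induction or by direct positivity, and this effectivity can be spread out to $X$ once the base of the fibration is handled by the good-model hypothesis.

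The principal obstacle is exactly the transition from numerical / pseudoeffective information to honest effectivity. The Campana--P\u{a}un input is intersection-theoretic in nature, whereas the conclusion $\kappa(X,M) \geq 0$ demands producing a nonzero section, and the MMP-twist together with the good-model hypothesis is precisely the mechanism that bridges this gap. A further technical difficulty, familiar from \cite{LP18}, lies in the bookkeeping of $\pi$-exceptional divisors: the section lives on the resolution $Y$ while the conclusion is stated on $X$, and one must verify that the effective divisor produced on a suitable model descends to an element of $|kM|$ on $X$ for some positive integer $k$, all while preserving the terminality of $(X,\Delta)$ and the nefness of $K_X+\Delta$ throughout.
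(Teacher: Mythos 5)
You have named the right ingredients --- the non-uniruledness forced by pseudoeffectivity of $K_X$, the Campana--P\u{a}un positivity of $\Omega^1_Y$, and the MMP twist of Theorem~\ref{thm:MMPtwist} --- and you correctly see that the nefness of $K_X+\Delta$ is what makes the last step go through. But the mechanism you describe for how these combine is backwards in a way that leaves a real gap. In the actual argument, \emph{effectivity} is secured before any MMP or canonical-bundle-formula machinery: a pigeonhole/finiteness argument (\cite[Lemma 4.1]{LP18}, applied with $\mathcal E=(\Omega^1_Y)^{\otimes p}$ and $\mathcal L=\pi^*\OO_X(M)$) shows that for infinitely many $m$ in an infinite set $\mathcal S$, the saturations of the images of $\OO_Y(-m\pi^*M)\hookrightarrow\bigwedge^r\mathcal E$ coincide with a \emph{single} fixed saturated line bundle $\mathcal M\subseteq\bigwedge^r(\Omega^1_Y)^{\otimes p}$, yielding honest effective integral divisors $N_m$ with $\OO_Y(N_m)\simeq\mathcal M\otimes\mathcal L^{\otimes m}$. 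The role of Campana--P\u{a}un (via \cite[Proposition 4.2]{LP18}) is then to bound this one fixed $\mathcal M$ by $\ell K_Y$ minus a pseudoeffective class $F$, producing a concrete $\Q$-linear equivalence $N_m+F\sim m\pi^*M+\ell K_Y$ --- not merely an ``asymptotic numerical inequality'' or a pseudoeffective class on a projectivised bundle. Without the pigeonhole step that fixes $\mathcal M$ across infinitely many $m$, you have no way to compare sections at different $m$, and your asymptotic inequality cannot be upgraded to effectivity.

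Consequently your description of Theorem~\ref{thm:MMPtwist} as the bridge ``from numerical/pseudoeffective information to honest effectivity'' misidentifies what it does: one simply pushes forward the relation to $X$ to get $\pi_*N_m+(\pi_*F+\ell\Delta)\sim_\Q mM+\ell(K_X+\Delta)$ with $\pi_*N_m\geq0$ and $\pi_*F+\ell\Delta$ pseudoeffective, and Theorem~\ref{thm:MMPtwist} then runs a carefully chosen $M$-trivial MMP to convert this family of relations into $\kappa(X,M)=\max_m\kappa(X,\pi_*N_m)\geq0$; it does not manufacture sections from intersection numbers. The ``bookkeeping of $\pi$-exceptional divisors'' you flag as the other difficulty is handled by nothing more than this pushforward, since $\pi_*N_m\geq0$ and $\pi_*F$ pseudoeffective are automatic. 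You also omit two reductions the paper makes at the outset: passing to a small $\Q$-factorialisation (so that pushforwards behave), and treating the degenerate case $M\equiv0$ separately (where terminality forces $K_X\sim_\Q0$, $\Delta=0$, $L\equiv0$ and one concludes directly). These are minor, but the missing pigeonhole step is the substantive gap: it is exactly what turns ``infinitely many sections'' into effective divisors $N_m$ tied together by linear equivalence, and without it the rest of your outline does not assemble into a proof.
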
 

\begin{proof} 
Let $\rho\colon X'\to X$ be a small $\Q$-factorialisation of $X$. Then the pair $(X',\rho^{-1}_*\Delta)$ is terminal and $K_{X'}+\rho^{-1}_*\Delta\sim_\Q\rho^*(K_X+\Delta)$. We may assume that $\pi$ factors through $\rho$. By replacing $(X,\Delta)$ by $(X',\rho^{-1}_*\Delta)$ and $L$ by $\rho^*L$, we may thus assume that $X$ is $\Q$-factorial. 

\medskip

If $M\equiv0$, then the assumptions imply that $K_X\equiv0$ (hence $K_X\sim_\Q0$ by \cite[Theorem 8.2]{Kaw85b}), $\Delta=0$ and $L\equiv0$. Then $\kappa(X,M)=\kappa(X,L)\geq0$ by the first part of the proof of \cite[Theorem 8.1]{LP18}.

\medskip

Therefore, from now on we may assume that $M\not\equiv0$. We apply \cite[Lemma 4.1]{LP18} with $\mathcal E := (\Omega^1_Y)^{\otimes p} $ and $\mathcal L := \pi^*\OO_X(M)$. Then there exist a positive integer $r$, a saturated line bundle $\mathcal M$ in $\bigwedge^r\mathcal E$, an infinite set $\mathcal S\subseteq\N$ and integral divisors $N_m\geq0$ for $m\in\mathcal S$ such that 
\begin{equation}\label{eq:infmany}
\OO_Y(N_m) \simeq \mathcal M\otimes \mathcal L^{\otimes m}.
\end{equation}
Since $X$ is terminal and $K_X$ is pseudoeffective, the divisor $K_Y$ is also pseudoeffective, hence \cite[Proposition 4.2]{LP18} implies that there exist a positive integer $\ell$ and a pseudoeffective divisor $F$ such that
\begin{equation}\label{eq:relation}
N_m+ F \sim m\pi^*M+\ell K_Y.
\end{equation}
Noting that $\pi_*N_m$ is effective and that $\pi_*F$ is pseudoeffective, by pushing forward the relation \eqref{eq:relation} to $X$ we get
$$\pi_*N_m+\pi_*F\sim_\Q mM +\ell K_X,$$ 
and hence
$$\pi_*N_m+(\pi_*F+\ell\Delta)\sim_\Q mM +\ell (K_X+\Delta).$$ 
Now we conclude by Theorem \ref{thm:MMPtwist}.
\end{proof} 

\begin{thm} \label{thm:MMPtwist}
Assume the existence of good models for klt pairs in dimensions at most $n-1$. 

Let $(X,\Delta)$ be a $\Q$-factorial projective klt pair of dimension $n$ such that $K_X+\Delta$ is nef, and let $L$ be a nef Cartier divisor on $X$. Let $t$ be a positive integer such that $t(K_X+\Delta)$ is Cartier, and denote $M=t(K_X+\Delta)+L$. Assume that there exist a pseudoeffective $\Q$-divisor $F$ on $X$, a positive integer $\ell$ and an infinite subset $\mathcal S\subseteq\N$ such that 
\begin{equation}\label{eq:rel2a}
N_m+F\sim_\Q mM+\ell(K_X+\Delta)
\end{equation}
for all $m\in\mathcal S$, where $N_m\geq0$ are integral Weil divisors. Then 
$$\kappa(X,M)=\max\{\kappa(X,N_m)\mid m\in\mathcal S\}\geq0.$$
\end{thm}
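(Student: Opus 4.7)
I would prove the identity $\kappa(X,M)=\max_{m\in\mathcal S}\kappa(X,N_m)$ by establishing the two inequalities separately; the nonnegativity is automatic since each $N_m$ is effective.

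\textbf{Upper bound.} Subtracting the relation \eqref{eq:rel2a} for two distinct indices $m<m'$ in $\mathcal S$ yields
\[
(m'-m)M\sim_\Q N_{m'}-N_m.
\]
For any $s\in\N$ making both sides integral, sections of $s(m'-m)M$ correspond to sections of $sN_{m'}$ vanishing along $sN_m$, so $h^0(X,s(m'-m)M)\leq h^0(X,sN_{m'})$. Hence $\kappa(X,M)\leq\kappa(X,N_{m'})$ for every non-minimal $m'\in\mathcal S$, and in particular for any $m'$ attaining the maximum.

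\textbf{Lower bound.} Fix $m_0\in\mathcal S$ attaining $k:=\max_{m\in\mathcal S}\kappa(X,N_m)$ and let $\varphi\colon X\dashrightarrow Y$ be the Iitaka fibration of $N_{m_0}$, so $\dim Y=k$. On a general fibre $G$ of $\varphi$ (which we may assume smooth of dimension $n-k$), one has $\kappa(G,N_{m_0}|_G)=0$. The identity above restricts to $N_{m'}|_G\sim_\Q N_{m_0}|_G+(m'-m_0)M|_G$ for $m'\in\mathcal S$, which together with $\kappa(X,N_{m'})\leq k$ forces $\kappa\bigl(G,N_{m_0}|_G+(m'-m_0)M|_G\bigr)=0$ for every $m'>m_0$ in $\mathcal S$. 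Since $(G,\Delta|_G)$ is a klt pair of dimension $\leq n-1$ with $K_G+\Delta|_G$ nef, the assumed existence of good models in dimension $\leq n-1$ gives that $K_G+\Delta|_G$ is semiample. Combined with the restricted relation $N_{m_0}|_G+F|_G\sim_\Q m_0 M|_G+\ell(K_G+\Delta|_G)$ and abundance for the klt pair $(G,\Delta|_G+\varepsilon N_{m_0}|_G)$ (valid for small $\varepsilon>0$ by the inductive hypothesis), one concludes that $M|_G\equiv 0$. Thus $M$ descends numerically through $\varphi$: a $(K_X+\Delta+\varepsilon N_{m_0})$-MMP with scaling over $Y$, which terminates by induction and contracts the fibres of $\varphi$, then produces an effective divisor on $Y$ whose pullback represents a positive multiple of $M$ modulo $\varphi$-exceptional contributions. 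This yields $\kappa(X,M)\geq k$.

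\textbf{Main obstacle.} The delicate step is the descent: the pseudoeffective divisor $F$ in \eqref{eq:rel2a} is controlled only up to its Nakayama decomposition, so translating the fibrewise numerical triviality $M|_G\equiv 0$ into a $\Q$-linear equivalence $M\sim_\Q\varphi^*D$ for some effective $D$ on $Y$ requires careful MMP bookkeeping. The framework of the $(K_X+\Delta+\varepsilon N_{m_0})$-MMP with scaling over $Y$ is essential here, and good models in dimension $\leq n-1$ are invoked both on the fibre $G$ and over the base $Y$ to ensure termination and semiampleness.
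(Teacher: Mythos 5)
Your upper bound is correct and amounts to a special case of Step~1 of the paper's proof: from $(m'-m)M\sim_\Q N_{m'}-N_m$ one gets $\kappa(X,M)\leq\kappa(X,N_{m'})$ for any $m<m'$ in $\mathcal S$, and since $\mathcal S$ is infinite this gives $\kappa(X,M)\leq\max_m\kappa(X,N_m)$. The lower bound, however, contains a genuine gap at its central step. You assert that $\kappa(X,N_{m'})\leq k$ forces $\kappa\bigl(G,N_{m_0}|_G+(m'-m_0)M|_G\bigr)=0$, where $G$ is a general fibre of the Iitaka fibration of $N_{m_0}$. The only general inequality relating $\kappa(X,D)$ and $\kappa(G,D|_G)$ is easy addition, $\kappa(X,D)\leq\kappa(G,D|_G)+\dim Y$, which goes in the opposite direction: it bounds $\kappa(X,D)$ \emph{above} by $\kappa(G,D|_G)$, and is useless for deducing vanishing of $\kappa(G,N_{m'}|_G)$ from smallness of $\kappa(X,N_{m'})$. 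The divisor $N_{m'}$ is not $N_{m_0}$, and restricting it to a general fibre of $N_{m_0}$'s Iitaka fibration offers no control whatsoever. Without this there is no route to $M|_G\equiv 0$. You also do not treat the case $k=n$, where $N_{m_0}$ is big and $\varphi$ birational so there is no fibre $G$ of positive dimension, and your closing paragraph acknowledges that the descent to $Y$ is the hard part without actually carrying it out.

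For contrast, the paper's proof never passes to the Iitaka fibration of an $N_{m_0}$. It first disposes of the cases $\kappa(X,M+\lambda N_p)=n$ (then $M$ is big by juggling \eqref{eq:rel2a}) and $\kappa(X,M+\lambda N_p)=0$ (then all $N_r$ share a fixed reduced support for large $r$, and a pigeonhole argument from \cite[Lemma 2.1]{LP18} gives $N_{p'}\leq N_{q'}$ and hence $\kappa(X,M)\geq 0$). In the remaining range $0<\kappa<n$ it then proves the auxiliary nonvanishing $\kappa(X,K_X+\Delta)\geq 0$, and --- this is the key technical core --- runs a $(K_X+\Delta+\delta N_w)$-MMP with the indices $w>k$ and the scale $\varepsilon$ chosen so that $\varepsilon(w-k)>2n$; by the boundedness of extremal rays this MMP is $M$-trivial, which is precisely what allows both $K_X+\Delta+\delta N_w$ and $K_X+\Delta+\delta N_s$ to become semiample on the minimal model and then lets the Rigidity Lemma produce a morphism between their Iitaka bases through which $M$ descends to a big divisor. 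Your proposal discards exactly this carefully arranged $M$-triviality, which is why the descent cannot be closed in your scheme.
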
 

\begin{proof}
Steps 1--4 follow closely the proof of \cite[Theorem 3.1]{LP18}, and we include the details for the benefit of the reader.

\medskip

Note first that \eqref{eq:rel2a} implies
\begin{equation}\label{eq:rel2b}
N_p-N_q\sim_\Q (p-q)M\quad\text{for all }p,q\in\mathcal S.
\end{equation}
It suffices to show that
\begin{equation}\label{eq:kappalargem}
\kappa(X,M)\geq\kappa(X,N_m)\quad\text{for all large }m\in\mathcal S.
\end{equation}
Indeed, then $\kappa(X,M)\geq0$, hence \eqref{eq:rel2b} gives $\kappa(X,M)\leq\kappa(X,N_m)$ for all large $m\in\mathcal S$ and $\kappa(X,N_q)\leq \kappa(X,N_p)$ for $p,q\in\mathcal S$ with $q<p$, which implies the theorem.

\medskip

\emph{Step 1.}
We claim that for every $m\in\mathcal S$ and every rational number $\lambda>0$ we have
\begin{equation}\label{eq:kodaira}
\kappa(X,M+\lambda N_m)\geq\kappa(X,N_m),
\end{equation}
and in particular,
\begin{equation}\label{eq:kodaira3}
\kappa(X,M+\lambda N_m)\geq0 \text{ for every }m\in\mathcal S \text{ and every rational }\lambda>0.
\end{equation}
Indeed, fix $m\in\mathcal S$ and a rational number $\lambda>0$. Pick $m'\in\mathcal S$ so that $\frac{1}{m'-m}<\lambda$. Then by \eqref{eq:rel2b} we have
\begin{align}\label{eq:33a}
M+\lambda N_m& \textstyle =\frac{1}{m'-m}\big((m'-m)M+N_m\big)+\big(\lambda-\frac{1}{m'-m}\big)N_m\\
&\textstyle \sim_\Q \frac{1}{m'-m} N_{m'}+\big(\lambda-\frac{1}{m'-m}\big)N_m,\notag
\end{align}
which proves \eqref{eq:kodaira}.

\medskip

There are now three cases to consider.

\medskip

\emph{Step 2.}
First assume that 
$$M+\lambda N_p \text{ is big for some }p\in\mathcal S \text{ and some rational number }\lambda>0.$$
Since 
$$\textstyle M \sim_\Q \frac{1}{1+\lambda p+\lambda \ell/t} \big(M+\lambda N_p+\lambda F+\frac{\lambda \ell}{t} L\big)$$
by \eqref{eq:rel2a}, this implies that $M$ is big, which shows \eqref{eq:kappalargem}.

\medskip

\emph{Step 3.}
Now assume that 
$$\kappa(X,M+\lambda N_p)=0 \text{ for some }p\in\mathcal S \text{ and some rational number }\lambda>0.$$
Fix $q\in\mathcal S$ such that $0<\frac{1}{q-p}<\lambda$. Then as in \eqref{eq:33a} we have
$$\textstyle M+\lambda N_p\sim_\Q \frac{1}{q-p} N_q+\big(\lambda-\frac{1}{q-p}\big)N_p,$$
hence $\kappa(X,N_q)\leq\kappa(X,M+\lambda N_p)=0$, and therefore $\kappa(X,N_q)=0$. Let $r\in\mathcal S$ be such that $r>q$. Then by \eqref{eq:rel2b} we have
$$M\sim_\Q\frac{1}{q-p}(N_q-N_p)\quad\text{and}\quad M\sim_\Q \frac{1}{r-p}(N_r-N_p),$$
so that
$$(r-p)N_q\sim_\Q(q-p)N_r+(r-q)N_p\geq0.$$
Since $\kappa(X,N_q)=0$, this implies
$$(r-p)N_q=(q-p)N_r+(r-q)N_p, $$
and hence 
\begin{equation}\label{eq:kappaSupp}
\Supp N_r\subseteq\Supp N_q \quad\text{and}\quad \kappa(X,N_r)=0.
\end{equation}
Therefore, for $r>q$, all divisors $N_r$ are supported on the reduced Weil divisor $\Supp N_q$. By \cite[Lemma 2.1]{LP18} there are positive integers $p'\neq q'$ larger than $q$ in $\mathcal S$ such that $N_{p'}\leq N_{q'}$, and thus by \eqref{eq:rel2b},
$$(q'-p')M\sim_\Q N_{q'}-N_{p'}\geq0,$$
hence $\kappa(X,M)\geq 0$. This together with \eqref{eq:kappaSupp} shows \eqref{eq:kappalargem}.

\medskip

\emph{Step 4.}
Finally, by \eqref{eq:kodaira3} and by Steps 2 and 3 for the rest of the proof we may assume that
\begin{equation}\label{eq:kodaira1}
0<\kappa(X,M+\lambda N_p)<n \quad\text{for every }p\in\mathcal S\text{ and every }\lambda>0.
\end{equation}
In this step we show that 
\begin{equation}\label{eq:positivekodaira}
\kappa(X,K_X+\Delta)\geq0.
\end{equation}
Indeed, fix $p\in\mathcal S$ and denote $P=M+N_p$. Then $\kappa(X,P)\in\{1,\dots,n-1\}$ by \eqref{eq:kodaira1}, hence by \cite[Lemma 2.3]{LP18} there exists a resolution $\pi\colon Y\to X$ and a fibration $f\colon Y\to Z$:
\[
\xymatrix{ 
Y \ar[d]_{\pi} \ar[r]^{f} & Z\\
X & 
}
\]
such that $\dim Z=\kappa(X,P)$, and for a very general fibre $F$ of $f$ and for every $\pi$-exceptional $\Q$-divisor $G$ on $Y$ we have
\begin{equation}\label{eq:exceptional}
\kappa\big(F,(\pi^*P+G)|_F\big)=0.
\end{equation} 
There exist effective $\Q$-divisors $\Delta_Y$ and $E$ without common components such that
$$K_Y+\Delta_Y\sim_\Q\pi^*(K_X+\Delta)+E,$$
and it is enough to show that $\kappa(Y,K_Y+\Delta_Y)\geq0$. By \eqref{eq:exceptional} we have
$$\kappa\big(F,\big(t(K_Y+\Delta_Y)+\pi^*L+\pi^*N_p\big)|_F\big)=\kappa\big(F,(\pi^*P+tE)|_F\big)=0.$$
In particular, $(K_Y+\Delta_Y)|_F$ is not big on $F$ since otherwise $\big(t(K_Y+\Delta_Y)+\pi^*L+\pi^*N_p\big)|_F$ would also be big on $F$. But then $\kappa(Y,K_Y+\Delta_Y)\geq0$ by \cite[Proposition 3.2]{LP18}, which proves the claim \eqref{eq:positivekodaira}.

\medskip

\emph{Step 5.}
From \eqref{eq:kodaira1} and \eqref{eq:positivekodaira} it immediately follows that 
$$0\leq\kappa(X,K_X+\Delta+\lambda N_p)<n\quad \text{ for every }p\in\mathcal S\text{ and every }\lambda>0.$$
Assume that there exist two elements $p<q$ in $\mathcal S$ and positive rational numbers $\lambda_p$ and $\lambda_q$ such that 
$$\kappa(X,K_X+\Delta+\lambda_p N_p)=\kappa(X,K_X+\Delta+\lambda_q N_q)=0.$$
Then \eqref{eq:positivekodaira} implies $\kappa(X,N_p)=\kappa(X,N_q)=0$. Thus, as in Step 3 we have $\kappa(X,M)\geq0$ and $\kappa(X,N_r)=0$ for all $r\in\mathcal S$ with $r\geq q$, which yields \eqref{eq:kappalargem}.

\medskip

\emph{Step 6.} 
Therefore, by Step 5 we may assume that
\begin{equation}\label{eq:kodaira2}
0<\kappa(X,K_X+\Delta+\lambda N_p)<n \quad\text{for every }p\in\mathcal S\text{ and every }\lambda>0.
\end{equation}
Fix integers $w>k$ in $\mathcal S$ and a rational number $0<\varepsilon\ll1$ such that
\begin{enumerate}
\item[(a)] the pair $(X,\Delta+\varepsilon N_k)$ is klt, 
\item[(b)] $\varepsilon(w-k)>2n$.
\end{enumerate}

Then in Steps 6--8 we show that for every $s\in\mathcal S$ with $s>w$ we have
\begin{equation}\label{eq:finalclaim}
\kappa(X,M)\geq\kappa(X,N_s),
\end{equation}
which then implies \eqref{eq:kappalargem} and proves the theorem.

\medskip

Fix now any $s\in\mathcal S$ with $s>w$, and fix a rational number $0<\delta<\varepsilon$ such that:
\begin{enumerate}
\item[(c)] the pairs $(X,\Delta+\delta N_w)$ and $(X,\Delta+\delta N_s)$ are klt.
\end{enumerate}
Since we are assuming the existence of good models for klt pairs in lower dimensions, by \eqref{eq:kodaira2} and by \cite[Theorem 2.5]{LP18} applied to a birational model of the Iitaka fibration of $K_X+\Delta+\delta N_w$, there exists a $(K_X+\Delta+\delta N_w)$-MMP $\theta\colon X\dashrightarrow X_{\min}$ which terminates with a good model for $(X,\Delta+\delta N_w)$. 

\medskip

We claim that:  
\begin{enumerate}
\item[(i)] $\theta$ is $N_w$-negative,
\item[(ii)] $\theta$ is $(K_X+\Delta)$-trivial as well as $L$-trivial; hence, the proper transforms of $t(K_X+\Delta)$ and $L$ at every step of this MMP are nef Cartier divisors by \cite[Theorem 3.7(4)]{KM98},
\item[(iii)] $\theta$ is $(K_X+\Delta+\delta N_s)$-negative.
\end{enumerate} 

Indeed, it is enough to show the claim for the first step of the MMP, as the rest is analogous; one important point is that relations \eqref{eq:rel2a} and \eqref{eq:rel2b} also continue to hold for the proper transforms of the divisors involved. 

Let $c_R\colon X\to Z$ be the contraction of a $(K_X+\Delta+\delta N_w)$-negative extremal ray $R$ in this MMP. Since $K_X+\Delta$ is nef, we immediately get that $R$ is $N_w$-negative, which gives (i). 

For (ii), since $K_X+\Delta$ and $L$ are both nef, it suffices to show that $c_R$ is $M$-trivial. By the boundedness of extremal rays \cite[Theorem 1]{Kaw91}, there exists a rational curve $C$ contracted by $c_R$ such that 
$$(K_X+\Delta+\varepsilon N_k)\cdot C\geq {-}2n.$$
If $c_R$ were not $M$-trivial, then $M\cdot C\geq1$ as $M$ is nef and Cartier. But then \eqref{eq:rel2b} and the condition (b) above yield
$$(K_X+\Delta+\varepsilon N_w)\cdot C=(K_X+\Delta+\varepsilon N_k)\cdot C+\varepsilon(w-k)M\cdot C>0.$$
On the other hand, by (i) we have
$$(K_X+\Delta+\varepsilon N_w)\cdot C=(K_X+\Delta+\delta N_w)\cdot C+(\varepsilon-\delta)N_w\cdot C<0,$$
a contradiction which shows (ii). 

Finally, by \eqref{eq:rel2b} and by (ii) we have
$$(K_X+\Delta+\delta N_s)\cdot R=(K_X+\Delta+\delta N_w)\cdot R+\delta(s-w)M\cdot R<0,$$
which gives (iii).

\medskip

\emph{Step 7.}
Now, by (ii) we have
$$\kappa(X,M)=\kappa(X_{\min},\theta_*M),$$
the divisor $\theta_*F$ is pseudoeffective, and
$$\kappa(X_{\min},\theta_*N_m)\geq\kappa(X,N_m)\quad\text{for }m\in\mathcal S$$
by \cite[Lemma 2.8]{LP18}. 
Furthermore, $K_{X_{\min}}+\theta_*\Delta+\delta \theta_*N_w$ is semiample, and by \eqref{eq:rel2b} we have 
$$K_{X_{\min}}+\theta_*\Delta+\delta \theta_*N_s\sim_\Q K_{X_{\min}}+\theta_*\Delta+\delta \theta_*N_w+\delta(s-w)\theta_*M,$$
hence $K_{X_{\min}}+\theta_*\Delta+\delta \theta_*N_s$ is likewise nef by (ii). By (iii) and by \eqref{eq:kodaira2} we have
$$\kappa(X_{\min},K_{X_{\min}}+\theta_*\Delta+\delta\theta_*N_s)=\kappa(X,K_X+\Delta+\delta N_s)\in\{1,\dots,n-1\},$$
hence $K_{X_{\min}}+\theta_*\Delta+\delta\theta_*N_s$ is semiample by \cite[Theorem 2.5]{LP18} applied to a birational model of the Iitaka fibration of $K_{X_{\min}}+\theta_*\Delta+\delta\theta_*N_s$.
 
Therefore, in order to show \eqref{eq:kappalargem}, by replacing $X$ by $X_{\min}$, $\Delta$ by $\theta_*\Delta$, $L$ by $\theta_*L$, $F$ by $\theta_*F$ and $N_m$ by $\theta_*N_m$ for every $m\in\mathcal S$, we may additionally assume:

\medskip

\emph{Assumption 1.} The divisors $K_X+\Delta+\delta N_w$ and $K_X+\Delta+\delta N_s$ are semiample, and $\kappa(X,K_X+\Delta+\delta N_s)\in\{1,\dots,n-1\}$. 

\medskip

\emph{Step 8.}
Let $\varphi_w\colon X\to S_w$ and $\varphi_s\colon X\to S_s$ be the Iitaka fibrations associated to $K_X+\Delta+\delta N_w$ and $K_X+\Delta+\delta N_s$, respectively. Then there exist ample $\Q$-divisors $A_w$ on $S_w$ and $A_s$ on $S_s$ such that
\begin{equation}\label{eq:pullbacks}
K_X+\Delta+\delta N_w\sim_\Q\varphi_w^*A_w\quad\text{and}\quad K_X+\Delta+\delta N_s\sim_\Q\varphi_s^*A_s.
\end{equation}
If $\xi$ is a curve on $X$ contracted by $\varphi_s$, then by \eqref{eq:rel2b} we have
\begin{align*}
0&=(K_X+\Delta+\delta N_s)\cdot \xi=(K_X+\Delta+\delta N_w)\cdot \xi+\delta(s-w)M\cdot \xi\\
& =(K_X+\Delta+\delta N_w)\cdot \xi+\delta(s-w)(K_X+\Delta)\cdot \xi+\delta(s-w)L\cdot \xi,
\end{align*}
hence 
$$(K_X+\Delta+\delta N_w)\cdot \xi=(K_X+\Delta)\cdot \xi=L\cdot \xi=0$$
since $K_X+\Delta+\delta N_w$, $K_X+\Delta$ and $L$ are nef.

This implies two things: first, that $K_X+\Delta$ is $\varphi_s$-numerically trivial, and hence by Lemma \ref{lem:numlintrivial} there exists a $\Q$-Cartier $\Q$-divisor $B_s$ on $S_s$ such that
\begin{equation}\label{eq:pullbackklt}
K_X+\Delta\sim_\Q\varphi_s^*B_s.
\end{equation}

Second, every curve contracted by $\varphi_s$ is also contracted by $\varphi_w$. Hence, by the Rigidity lemma \cite[Lemma 1.15]{Deb01} there exists a morphism $\psi\colon S_s\to S_w$ such that $\varphi_w=\psi\circ\varphi_s$. 
\[
\xymatrix{ 
& X \ar[ld]_{\varphi_s} \ar[dr]^{\varphi_w} & \\
S_s \ar[rr]^{\psi} & & S_w 
}
\]
Therefore, denoting 
$$\textstyle L_s:=\frac{1}{\delta(s-w)} (A_s-\psi^*A_w)-tB_s$$
and
$$\textstyle F_s:=(\ell+\frac{1}{\delta}) B_s+\frac{1}{\delta(s-w)}(w A_s-s\psi^* A_w),$$
by \eqref{eq:rel2a}, \eqref{eq:rel2b}, \eqref{eq:pullbacks} and \eqref{eq:pullbackklt} we have 
\begin{equation}\label{eq:twopullbacks}
L\sim_\Q \varphi_s^*L_s\quad\text{and}\quad F\sim_\Q \varphi_s^*F_s.
\end{equation}
This implies that $L_s$ is nef and $F_s$ is pseudoeffective. Now by \eqref{eq:rel2a}, \eqref{eq:pullbacks} and \eqref{eq:twopullbacks} we have
\begin{align*}
(1+\delta\ell+\delta st)M & =t(K_X+\Delta+\delta N_s)+\delta t F+(1+\delta\ell)L\\
& \sim_\Q\varphi_s^*\big(tA_s+\delta t F_s+(1+\delta\ell)L_s\big).
\end{align*}
Finally, since $tA_s+\delta t F_s+(1+\delta\ell)L_s$ is a big divisor on $S_s$, we obtain
\begin{align*}
\kappa(X,M)&=\kappa\big(S_s,tA_s+\delta t F_s+(1+\delta\ell)L_s\big)=\dim S_s\\
&=\kappa(X,K_X+\Delta+\delta N_s)\geq\kappa(X,N_s),
\end{align*}
where the last inequality follows by \eqref{eq:positivekodaira}. This shows \eqref{eq:finalclaim} and finishes the proof.
\end{proof}

\section{Metrics with generalised algebraic singularities, I}\label{sec:Algsing} 

The goal of this section is to apply Theorem \ref{thm:nonvanishingForms} in the case when one knows that the nef divisors involved possess metrics with generalised algebraic singularities. This will have surprising consequences for the relationship between effectivity and num-effectivity. 

Additionally, we assume that $K_X$ is pseudoeffective and $K_X + \Delta$ is nef. The general case, where $K_X$ is not necessarily pseudoeffective and $K_X + \Delta$ is pseudoeffective rather then nef, will be treated in the following section. 

We start with the following result. Part (i) contains \cite[Corollary 4.5 and Corollary 8.5]{LP18} as special cases.

\begin{thm}\label{thm:nv0}
Assume the existence of good models for klt pairs in dimensions at most $n-1$. 

Let $(X,\Delta)$ be a projective terminal pair of dimension $n$ such that $K_X$ is pseudoeffective and $K_X+\Delta$ is nef, and let $L$ be a nef $\Q$-divisor on $X$. Assume that $\chi(X,\OO_X)\neq0$.
\begin{enumerate}
\item[(i)] Suppose that the divisor $K_X+\Delta+L$ has a singular metric with generalised algebraic singularities and semipositive curvature current. Then for every $\Q$-divisor $L'$ with $L\equiv L'$ we have $\kappa(X,K_X+\Delta+L') \geq 0$. 
\item[(ii)] In particular, if $\kappa(X,K_X+\Delta+L)\geq0$, then for every $\Q$-divisor $D$ with $K_X+\Delta+L\equiv D$ we have $\kappa(X,D)\geq0$. 
\end{enumerate}
\end{thm}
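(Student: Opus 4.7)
The plan is to prove (i) directly using Theorem~\ref{thm:nonvanishingForms} and to derive (ii) from it. First, to obtain (ii) from (i): the hypothesis $\kappa(X, K_X + \Delta + L) \geq 0$ produces an effective $\Q$-divisor $E \sim_\Q r(K_X+\Delta+L)$ for some positive integer $r$, whose current of integration $\tfrac{1}{r}[E]$ is a closed semipositive current on $X$ with (manifestly) generalised algebraic singularities; and given any $D \equiv K_X+\Delta+L$, the $\Q$-divisor $L' := D - (K_X+\Delta)$ is nef with $L' \equiv L$, so (i) applied to $L'$ gives $\kappa(X, D) = \kappa(X, K_X+\Delta+L') \geq 0$. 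In proving (i) I can further reduce to the case $L' = L$: any $L' \equiv L$ differs from $L$ by an element of $\Pic^0(X)_\Q$, which carries a flat unitary metric, and tensoring with the given metric on $K_X+\Delta+L$ produces a metric on $K_X+\Delta+L'$ with the same analytic properties.

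For the core of the argument I would pick a positive integer $t$ such that both $M := t(K_X + \Delta + L)$ and $t(K_X+\Delta)$ are Cartier, and apply Theorem~\ref{thm:nonvanishingForms} in the form $M = t(K_X+\Delta) + (tL)$ with Cartier divisor $tL$. Let $\pi\colon Y \to X$ be a log resolution on which the given metric $h$ on $\pi^*\OO_X(M)$ descends, and let $\Theta_h = \Theta + \sum \lambda_i [D_i]$ be its Siu decomposition. Since $\Theta$ is a semipositive current with zero Lelong numbers, the $\Q$-divisor $P := \pi^*M - \sum \lambda_i D_i$ is nef (cf.~Remark~\ref{rem:WZD}), and the multiplier-ideal formula recalled in Section~\ref{sec:prelim} gives, for every $m$ sufficiently divisible,
\[
\omega_Y \otimes \pi^*\OO_X(mM) \otimes \mathcal{I}(h^{\otimes m}) \simeq \OO_Y(K_Y + mP).
\]

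Hirzebruch--Riemann--Roch expresses $\chi(Y, K_Y + mP)$ as a polynomial in $m$ along this arithmetic progression, with constant term $\chi(Y, \omega_Y) = (-1)^n \chi(Y, \OO_Y) = (-1)^n \chi(X, \OO_X)$, which is nonzero by hypothesis since klt singularities are rational. Hence this polynomial is not identically zero, so for infinitely many such $m$ some $H^{q(m)}(Y, K_Y + mP)$ is nonzero; the pigeonhole principle fixes a single $q_0 \in \{0, \dots, n\}$ that works for an infinite set $\mathcal{S}$ of $m$'s. Applying Theorem~\ref{thm:DPS} to the pseudoeffective line bundle $\pi^*\OO_X(mM)$ equipped with the semipositive metric $h^{\otimes m}$, I obtain for each $m \in \mathcal{S}$ a surjection
\[
H^0\bigl(Y, \Omega^{n-q_0}_Y \otimes \pi^*\OO_X(mM) \otimes \mathcal{I}(h^{\otimes m})\bigr) \twoheadrightarrow H^{q_0}\bigl(Y, \omega_Y \otimes \pi^*\OO_X(mM) \otimes \mathcal{I}(h^{\otimes m})\bigr),
\]
whose target is nonzero. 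Dropping the multiplier ideal via $\mathcal{I}(h^{\otimes m}) \subseteq \OO_Y$ and composing with the antisymmetrisation $\Omega^{n-q_0}_Y \hookrightarrow (\Omega^1_Y)^{\otimes(n-q_0)}$ (a split injection in characteristic zero) yields nonzero sections of $(\Omega^1_Y)^{\otimes(n-q_0)} \otimes \pi^*\OO_X(mM)$ for infinitely many $m$, and Theorem~\ref{thm:nonvanishingForms} with $p = n - q_0$ then gives $\kappa(X, M) \geq 0$, i.e.\ $\kappa(X, K_X+\Delta+L) \geq 0$.

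The delicate point will be verifying that the polynomial $\chi(Y, K_Y + mP)$ is genuinely nonzero along the progression of sufficiently divisible $m$. This rests on three ingredients: the exact formula for $\mathcal{I}(h^{\otimes m})$ in terms of the Siu decomposition, so that the fractional parts $\{m\lambda_i\}$ vanish on the progression; the rationality of klt singularities to identify $\chi(Y, \OO_Y)$ with $\chi(X, \OO_X)$; and the hypothesis $\chi(X, \OO_X) \neq 0$, which supplies the nonzero constant term $\chi(Y, \omega_Y)$.
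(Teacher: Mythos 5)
Your proposal is correct and follows essentially the same route as the paper's proof: the same Siu decomposition, the same multiplier-ideal formula along the divisibility progression, Theorem~\ref{thm:DPS} for hard Lefschetz, rationality of klt singularities to transport $\chi(\OO_Y)$ to $\chi(\OO_X)$, and Theorem~\ref{thm:nonvanishingForms} to conclude. The only cosmetic difference is that you run the argument directly (Riemann--Roch gives infinitely many $m$ with nonzero Euler characteristic, pigeonhole on the cohomological degree $q_0$, then DPS produces sections), whereas the paper argues by contradiction (assuming $\kappa=-\infty$, the contrapositive of Theorem~\ref{thm:nonvanishingForms} kills all $H^0$ of the twisted $\Omega^p$, DPS kills all higher cohomology, so the Euler characteristic polynomial vanishes identically, forcing $\chi(X,\OO_X)=0$) --- these are contrapositives of one another and use identical ingredients.
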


\begin{proof}
Part (ii) follows immediately from part (i).

The assumptions imply that $K_X+\Delta+L'$ has a singular metric with generalised algebraic singularities and semipositive curvature current. Therefore, we may assume that $L=L'$.

Let $\rho\colon X'\to X$ be a $\Q$-factorialisation of $X$. Then $\rho$ is an isomorphism in codimension $1$, hence 
$$K_{X'}+\rho_*^{-1}\Delta=\rho^*(K_X+\Delta)$$
and the pair $(X',\rho_*^{-1}\Delta)$ is terminal. By replacing $(X,\Delta)$ by $(X',\rho_*^{-1}\Delta)$, we may thus assume that $X$ is $\Q$-factorial. 

Choose a positive integer $\ell$ such that $M:=\ell(K_X+\Delta+L)$ is Cartier, and so that there exist a resolution $\pi\colon Y\to X$  and a metric $h$ with generalised algebraic singularities on $\pi^*\OO_X(M)$ as in Section \ref{sec:prelim}. Then the corresponding curvature current $T$ of $h$ has the Siu decomposition
$$T = \Theta+\sum_{j=1}^r \lambda_j D_j,$$
where the semipositive current $\Theta$ has all Lelong numbers zero, the numbers $\lambda_j$ are positive and rational and the divisor $\sum D_j$ has simple normal crossings. We have
\begin{equation}\label{eq:metric1}
\textstyle\mathcal I(h^{\otimes m})=\OO_Y\big({-}\sum_{j=1}^r\lfloor m\lambda_j\rfloor D_j\big).
\end{equation}

Arguing by contradiction, we assume that $\kappa(X,M)=-\infty$. Then by Theorem \ref{thm:nonvanishingForms}, for all $p\geq 0$ and for all $m\gg0$ we have
$$ H^0\big(Y,\Omega^p_Y \otimes \pi^*\OO_X(mM)\big)=0,$$
and thus
$$H^0\big(Y,\Omega^p_Y\otimes \pi^*\OO_X(mM)\otimes\mathcal I(h^{\otimes m})\big) = 0.$$
Theorem \ref{thm:DPS} implies that for all $p\geq 0$ and for all $m\gg0$:
$$H^p\big(Y,\OO_Y(K_Y+m\pi^*M)\otimes\mathcal I(h^{\otimes m})\big) = 0,$$
which together with \eqref{eq:metric1} and Serre duality yields 
\begin{equation}\label{eq:euler}
\textstyle\chi\big(Y,\OO_Y\big(\sum_{j=1}^r\lfloor m\lambda_j\rfloor D_j-m\pi^*M\big)\big) = 0
\end{equation}
for all $m\gg0$. Let $q$ be a positive integer such that $q\lambda_j\in\N$ for all $j$, and $D=\sum q\lambda_j D_j-q\pi^*M$. Then \eqref{eq:euler} implies
$$\chi\big(Y,\OO_Y(mD)\big) = 0\quad \text{for all }m\gg0.$$
But then $\chi(Y,\OO_Y) = 0$. Since $X$ has rational singularities, this implies $\chi(X,\OO_X) = 0$, a contradiction which finishes the proof.
\end{proof} 

When the metrics have all Lelong numbers zero, then the conclusion is even stronger:

\begin{thm}\label{thm:semipositive} 
Assume the existence of good models for klt pairs in dimensions at most $n-1$. 

Let $(X,\Delta)$ be a $\Q$-factorial projective terminal  pair of dimension $n$ such that $K_X$ is pseudoeffective and $K_X+\Delta$ is nef, and let $L$ be a nef $\Q$-Cartier divisor on $X$. 
\begin{enumerate}
\item[(i)] Assume that $K_X+\Delta+L$ has a singular metric with semipositive curvature current and vanishing Lelong numbers. If there exists a positive integer $m$ such that $m(K_X+\Delta+L)$ is Cartier and
$$\chi\big(X,\OO_X\big(m(K_X+\Delta+L)\big)\big)\neq0,$$
then for every $\Q$-divisor $L'$ with $L\equiv L'$ we have $\kappa(X,K_X+\Delta+L')\geq0$. 
\item[(ii)] If $K_X+\Delta$ and $K_X+\Delta+L$ have singular metrics with semipositive curvature currents and vanishing Lelong numbers (in particular, if $K_X+\Delta$ and $K_X+\Delta+L$ are semiample), and if $\chi(X,\OO_X)\neq0$, then for every $\Q$-divisor $L'$ with $L\equiv L'$ and for every rational number $t\geq0$, the divisor $K_X+\Delta+tL'$ is semiample.
\item[(iii)] If $K_X+\Delta$ has a singular metric $h$ with semipositive curvature current and vanishing Lelong numbers and if there exists a positive integer $\ell$ such that $\ell(K_X+\Delta)$ is Cartier and
$$\chi\big(X,\OO_X\big(\ell(K_X+\Delta)\big)\big)\neq0,$$
then every $\Q$-divisor $G$ with $K_X+\Delta\equiv G$ is semiample. 
\end{enumerate}
\end{thm}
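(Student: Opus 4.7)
The strategy refines that of Theorem~\ref{thm:nv0}: the vanishing of Lelong numbers trivialises the relevant multiplier ideals, so the Hard Lefschetz input becomes sharp enough to force the whole Euler characteristic of the adjoint divisor to vanish as a polynomial, and the main result of~\cite{GM17} then upgrades the resulting $\kappa\geq 0$ to semiampleness.

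\textbf{Part (i).} I argue by contradiction, assuming $\kappa(X,K_X+\Delta+L')=-\infty$ for some $L'\equiv L$. Since $L'-L$ is numerically trivial it carries a flat hermitian metric with zero curvature current, which I tensor with the given metric on $K_X+\Delta+L$ to obtain a metric on $K_X+\Delta+L'$ that again has semipositive curvature and vanishing Lelong numbers; this reduces the problem to $L'=L$. Pick $\ell\geq 1$ with $M:=\ell(K_X+\Delta+L)$ Cartier and the metric $h$ descending to a resolution $\pi\colon Y\to X$. The contrapositive of Theorem~\ref{thm:nonvanishingForms} gives
$$H^0\bigl(Y,(\Omega_Y^1)^{\otimes p}\otimes\pi^*\OO_X(mM)\bigr)=0\quad\text{for every }p\geq 0\text{ and every }m\gg 0,$$
so the same vanishing holds with $\Omega_Y^p$ in place of $(\Omega_Y^1)^{\otimes p}$. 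Because every Lelong number vanishes, $\mathcal I(h^{\otimes m})=\OO_Y$ for all $m$, and Theorem~\ref{thm:DPS} produces surjections
$$H^0\bigl(Y,\Omega_Y^{n-q}\otimes\pi^*\OO_X(mM)\bigr)\twoheadrightarrow H^q\bigl(Y,\OO_Y(K_Y+m\pi^*M)\bigr),$$
forcing $H^q(Y,K_Y+m\pi^*M)=0$ for every $q\geq 0$ and every $m\gg 0$. Grauert--Riemenschneider vanishing together with the Leray spectral sequence (using that the terminal variety $X$ has rational, hence Cohen--Macaulay, singularities) transfers this to $H^q(X,\OO_X(K_X+mM))=0$, and Serre duality on $X$ then yields $H^i(X,\OO_X(-mM))=0$ for every $i$ and every $m\gg 0$. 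Consequently the polynomial $P(m):=\chi(X,\OO_X(mM))$ vanishes at every sufficiently negative integer, hence $P\equiv 0$, contradicting the hypothesis $\chi(X,\OO_X(m_0M))\neq 0$.

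\textbf{Part (ii).} For every $t\in[0,1]$ and every $L'\equiv L$, the divisor $K_X+\Delta+tL'$ admits a metric with semipositive curvature and vanishing Lelong numbers: take the weighted tensor product of the two given metrics on $K_X+\Delta$ and $K_X+\Delta+L$, then twist by the flat metric on $L'-L$. Since $\chi(X,\OO_X(m(K_X+\Delta+tL')))$ is polynomial in $m$ with value $\chi(X,\OO_X)\neq 0$ at $m=0$, the hypothesis of part~(i) is satisfied, yielding $\kappa(X,K_X+\Delta+tL')\geq 0$. The main result of~\cite{GM17} then promotes this to semiampleness for $t\in[0,1]$. For $t>1$, having established that $K_X+\Delta+L'$ is semiample, fix an effective $\Q$-divisor $N\sim_\Q K_X+\Delta+L'$ with $(X,\Delta+\varepsilon N)$ klt for small $\varepsilon>0$, observe that the adjoint divisor of the perturbed pair equals $(1+\varepsilon)(K_X+\Delta)+\varepsilon L'$ up to $\Q$-linear equivalence, and iterate the $t\in[0,1]$ step to slide the coefficient of $L'$ arbitrarily high.

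\textbf{Part (iii).} Any $G\equiv K_X+\Delta$ can be written as $G=(K_X+\Delta)+P$ with $P$ numerically trivial. Equipping $P$ with its flat hermitian metric and tensoring with the given metric $h$ produces a metric on $G$ with semipositive curvature and vanishing Lelong numbers, so (ii) applies with $L$ replaced by $P$ and gives semiampleness of $G$. The main obstacle will be in part~(ii): one must extract from \cite{GM17} the precise implication ``$\kappa\geq 0$ plus a nef metric with vanishing Lelong numbers in the klt-adjoint setting implies semiampleness,'' and verify that the sliding perturbation for $t>1$ stays inside its hypotheses. Parts~(i) and~(iii) follow the template of Theorem~\ref{thm:nv0}, with the multiplier ideals replaced by the structure sheaf thanks to vanishing Lelong numbers, which turns the Euler-characteristic calculation into a clean polynomial contradiction.
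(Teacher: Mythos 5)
Your part (i) is a valid variant of the paper's argument. The paper applies Serre duality on the smooth resolution $Y$ and transfers the vanishing of $\chi$ to $X$ using rational singularities; you instead push $H^q\big(Y,\OO_Y(K_Y+m\pi^*M)\big)$ down to $X$ via Grauert--Riemenschneider and Leray and then use Serre duality on the Cohen--Macaulay variety $X$. Both are correct.

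There is, however, a genuine gap in part (ii), specifically in the passage from $t\in[0,1]$ to $t>1$. The perturbation you propose never leaves the convex cone spanned by $K_X+\Delta$ and $K_X+\Delta+L'$. Indeed, setting $\Delta':=\Delta+\varepsilon N$ with $N\sim_\Q K_X+\Delta+L'$, one has $K_X+\Delta'\sim_\Q (K_X+\Delta)+\varepsilon(K_X+\Delta+L')$ and $K_X+\Delta'+L'\sim_\Q(1+\varepsilon)(K_X+\Delta+L')$; both are nonnegative combinations of the two original divisors, so the vanishing--Lelong hypotheses are indeed met for the new pair, but every divisor $K_X+\Delta'+sL'$ with $s\in[0,1]$ is a nonnegative combination $a(K_X+\Delta)+b(K_X+\Delta+L')$ with $a,b\geq0$, which up to scaling is $K_X+\Delta+\tfrac{b}{a+b}L'$ with $\tfrac{b}{a+b}\in[0,1]$. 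More generally, any metric you can manufacture as a weighted tensor product of the two given metrics lives in this cone, and $K_X+\Delta+tL'$ lies strictly outside it whenever $t>1$ and $L'\not\equiv0$. So no iteration of the $t\in[0,1]$ step, with any choice of perturbed pair whose adjoint divisors remain in that cone, can reach $t>1$. The paper's proof crosses this barrier by a qualitatively different mechanism: once \cite{GM17} yields semiampleness of $K_X+\Delta$ and $K_X+\Delta+\varepsilon G$, the Iitaka fibrations $\varphi_0\colon X\to S_0$ and $\varphi_\varepsilon\colon X\to S_\varepsilon$ are compared. Every $\varphi_\varepsilon$-contracted curve $C$ satisfies $(K_X+\Delta)\cdot C=G\cdot C=0$ because both divisors are nef, so by the Rigidity lemma $\varphi_0$ factors through $\varphi_\varepsilon$. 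This forces $L\sim_\Q\varphi_\varepsilon^*L_\varepsilon$ for a nef $\Q$-divisor $L_\varepsilon$ on $S_\varepsilon$, and then $K_X+\Delta+tL\sim_\Q\tfrac{1}{1+\varepsilon}\varphi_\varepsilon^*\big(A_\varepsilon+(t+(t-1)\varepsilon)L_\varepsilon\big)$ with the pulled-back divisor ample for all $t\geq1$. That geometric step is essential and cannot be replaced by the formal sliding perturbation.

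A smaller issue occurs in part (iii): you invoke part (ii) directly, but (ii) assumes $\chi(X,\OO_X)\neq0$, whereas (iii) only assumes $\chi\big(X,\OO_X(\ell(K_X+\Delta))\big)\neq0$, and the former does not follow from the latter. The paper instead applies (i) with $L=0$ (whose hypothesis is exactly the one available) to deduce $\kappa(X,K_X+\Delta)\geq0$ and $\kappa(X,G)\geq0$, and then repeats the remainder of the argument of (ii), which from that point on no longer uses any Euler characteristic assumption.
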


\begin{proof}
In (i) and (ii), the assumptions imply that $K_X+\Delta+L'$ has a singular metric vanishing Lelong numbers and semipositive curvature current. Therefore, we may assume that $L=L'$. Denote $M:=K_X+\Delta+L$.

We first show (i). We follow closely the proof of Theorem \ref{thm:nv0}. We may assume that $X$ is $\Q$-factorial. 
 
Arguing by contradiction, assume that $\kappa(X,M) = - \infty$. By assumption, there exist a positive integer $\ell$ and a desingularisation $\pi\colon Y \to X$ such that $\ell M$ is Cartier and $\pi^*\OO_X(\ell M)$ has a singular metric $h$ with semipositive curvature current and vanishing Lelong numbers. This implies that 
$$ \mathcal I(h^{\otimes m}) = \OO_Y \quad\text{for all positive integers $m$.}$$
Then by Theorem \ref{thm:nonvanishingForms}, for all $p\geq 0$ and for all $m\gg0$ we have
$$ H^0\big(Y,\Omega^p_Y \otimes \pi^*\OO_X(m\ell M)\big)=0,$$
which together with Theorem \ref{thm:DPS} and Serre duality implies 
$$H^p\big(Y,\pi^*\OO_X(-m\ell M)\big) = 0\quad\text{for all }p\geq0\text{ and }m\gg0,$$
hence $\chi\big(Y,\pi^*\OO_X(m\ell M)\big) = 0$ for all $m\gg0$, hence for all $m$. Since $X$ has rational singularities, we deduce
$$ \chi\big(X,\OO_X(m\ell M)\big) = 0\quad\text{for all $m$},$$
a contradiction.

\medskip

Next we show (ii). By (i) and since $\chi(X,\OO_X)\neq0$, we have 
$$\kappa(X,K_X+\Delta)\geq0\quad\text{and}\quad \kappa(X,M)\geq0.$$
Then there exists an effective $\Q$-divisor $G$ such that $M\sim_\Q G$. Fix a rational number $0<\varepsilon\ll1$ such that the pair $(X,\Delta+\varepsilon G)$ is klt, and note that $K_X+\Delta+\varepsilon G$ has a singular metric $h$ with semipositive curvature current and vanishing Lelong numbers, since both $K_X+\Delta$ and $G$ have such metrics. By \cite[Theorem 5.1]{GM17}, the divisors $K_X+\Delta$ and $K_X+\Delta+\varepsilon G$ are semiample. Let $\varphi_0\colon X\to S_0$ and $\varphi_\varepsilon\colon X\to S_\varepsilon$ be the Iitaka fibrations of $K_X+\Delta$ and $K_X+\Delta+\varepsilon G$, respectively.

Let $C$ be a curve contracted by $\varphi_\varepsilon$. Then $(K_X+\Delta+\varepsilon G)\cdot C=0$ implies $$(K_X+\Delta)\cdot C=G\cdot C=0$$ as both $K_X+\Delta$ and $G$ are nef, hence by the Rigidity lemma \cite[Lemma 1.15]{Deb01} there exists a morphism $\xi\colon S_\varepsilon\to S_0$ such that $\varphi_0=\xi\circ\varphi_\varepsilon$.
\[
\xymatrix{ 
X \ar[d]_{\varphi_0} \ar[r]^{\varphi_\varepsilon} & S_\varepsilon \ar[dl]^\xi \\
S_0 & 
}
\]
There exist ample $\Q$-divisors $A_\varepsilon$ on $S_\varepsilon$ and $A_0$ on $S_0$ such that 
$$K_X+\Delta+\varepsilon G\sim_\Q\varphi_\varepsilon^*A_\varepsilon\quad\text{and}\quad K_X+\Delta\sim_\Q\varphi_0^*A_0,$$
and denote $L_\varepsilon=\frac{1}{\varepsilon}\big(A_\varepsilon-(1+\varepsilon)\xi^*A_0\big)$. Then
$$L\sim_\Q\varphi_\varepsilon^*L_\varepsilon,$$
and hence $L_\varepsilon$ is nef. Since for every $t\geq1$ we have
$$\textstyle K_X+\Delta+tL\sim_\Q \frac{1}{1+\varepsilon}\varphi_\varepsilon^*\big(A_\varepsilon+\big(t+(t-1)\varepsilon\big)L_\varepsilon\big)$$
and $A_\varepsilon+\big(t+(t-1)\varepsilon\big)L_\varepsilon$ is ample on $S_\varepsilon$, the divisor $K_X+\Delta+tL$ is semiample for every $t\geq1$. As $K_X+\Delta$ is semiample, it follows that $K_X+\Delta+tL$ is semiample for every $t\geq0$.

Finally, for (iii), there exists a $\Q$-divisor $L\equiv0$ such that $G=K_X+\Delta+L$. By (i) we have $\kappa(X,K_X+\Delta)\geq0$ and $\kappa(X,K_X+\Delta+L)\geq0$, and then the proof is the same as that of (ii).
\end{proof}

Theorem \ref{thm:nv0}(ii) and Theorem \ref{thm:semipositive}(iii) show that the Generalised Nonvanishing Conjecture, and in particular the techniques of this paper, have unexpected consequences even for the usual Nonvanishing Conjecture. We will partly generalise Theorem \ref{thm:semipositive} in Theorem \ref{thm:semipositive1} below.

In small dimensions, Theorem \ref{thm:semipositive} holds almost unconditionally by our results in \cite{LP18a}. We will present particularly interesting cases in Section \ref{sec:algsing2}.

\section{Metrics with generalised algebraic singularities, II}\label{sec:algsing2}

In this section we generalise the results from the previous section without assuming that we deal with minimal varieties. 

We use the same strategy as in Section \ref{sec:reductions}. The only issue is that we have to check that the reduction results preserve the assumptions on the singularities and the Euler-Poincar\'e characteristic. We thus have to analyse in detail the results of Section \ref{sec:reductions}.

We start with the following lemma.

\begin{lem}\label{lem:algsing}
Assume the termination of flips in dimensions at most $n-1$, and the Abundance Conjecture for klt pairs in dimensions at most $n-1$. 

Let $(X,\Delta)$ be a projective klt pair of dimension $n$ such that $K_X+\Delta$ is pseudoeffective. Suppose that $K_X+\Delta$ has a singular metric with generalised algebraic singularities and semipositive curvature current. If $\chi(X,\OO_X)\neq0$, then $\kappa(X,K_X+\Delta)\geq0$ and every sequence of flips of $(X,\Delta)$ terminates. 
\end{lem}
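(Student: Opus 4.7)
The plan is to reduce to a minimal-model setting where Theorem \ref{thm:nv0} applies directly. First, by \cite[Corollary 1.2]{HMX14}, it suffices to establish $\kappa(X,K_X+\Delta)\geq0$; the termination of every sequence of flips of $(X,\Delta)$ will then follow automatically. I would begin by passing to a $\Q$-factorial terminalisation $\pi\colon X'\to X$ of $(X,\Delta)$. Since $K_{X'}+\pi_*^{-1}\Delta=\pi^*(K_X+\Delta)$, the given metric with generalised algebraic singularities and semipositive curvature current lifts to $K_{X'}+\pi_*^{-1}\Delta$, and the invariant $\chi(X',\OO_{X'})=\chi(X,\OO_X)\neq0$ is preserved because both $X$ and $X'$ have rational singularities. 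Thus we may assume that $(X,\Delta)$ is $\Q$-factorial and terminal.

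Next I split into two cases, according to the pseudo-effectivity of $K_X$. If $K_X$ is not pseudoeffective, then the conclusion $\kappa(X,K_X+\Delta)\geq0$ follows directly from \cite[Theorem 3.3]{DL15}, exactly as in the proof of Lemma \ref{lem:Knotpsef}, and does not even require the metric hypothesis. If $K_X$ is pseudoeffective, the strategy is to produce a minimal model $\varphi\colon X\dashrightarrow X_{\min}$ of $(X,\Delta)$. Here Remark \ref{rem:WZD} is crucial: it tells us that the existence of a metric with generalised algebraic singularities and semipositive curvature current forces $K_X+\Delta$ to admit a weak Zariski decomposition in the sense of \cite[Definition 1.3]{Bir12b}. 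Combined with the assumed termination of flips and Abundance Conjecture in dimensions at most $n-1$, Birkar's existence theorem \cite{Bir12b} then produces the desired minimal model $(X_{\min},\Delta_{\min})$ with $K_{X_{\min}}+\Delta_{\min}$ nef.

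Along this birational contraction $\varphi$, which is $(K_X+\Delta)$-non-positive by construction, Corollary \ref{cor:algsinMMP} propagates the metric to $K_{X_{\min}}+\Delta_{\min}$; the Euler characteristic $\chi(X_{\min},\OO_{X_{\min}})=\chi(X,\OO_X)\neq0$ survives because every intermediate variety appearing in the MMP has rational singularities; and the pseudo-effectivity of $K_X$ is preserved, since for divisorial contractions $\varphi_*K_X=K_{X_{\min}}$ and pushforward preserves psef currents, while for flips one argues on a common resolution in the $\Q$-factorial category. All hypotheses of Theorem \ref{thm:nv0}(i), applied with $L=0$, are therefore satisfied on $(X_{\min},\Delta_{\min})$, and so $\kappa(X_{\min},K_{X_{\min}}+\Delta_{\min})\geq0$. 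Finally, since $\varphi$ is $(K_X+\Delta)$-non-positive, the Kodaira dimension is preserved under $\varphi$, and we conclude $\kappa(X,K_X+\Delta)\geq0$, which finishes the proof.

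The main obstacle is producing the minimal model in dimension $n$ itself, since the existence of minimal models in dimension $n$ is \emph{not} part of the hypotheses of this lemma, in contrast to Lemma \ref{lem:termination}. The essential insight is that Remark \ref{rem:WZD} bridges the metric assumption to Birkar's weak Zariski machinery, which is precisely the reason why this lemma needs no separate ``existence of minimal models in dimension $n$'' hypothesis. A secondary subtlety worth checking carefully is that pseudo-effectivity of $K_X$ alone (and not only of $K_X+\Delta$) is preserved throughout the $(K_X+\Delta)$-MMP; this is standard but must be verified step-by-step for divisorial contractions and flips via pushforward of psef currents.
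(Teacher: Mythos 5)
Your proof is essentially the same as the paper's: both reductions hinge on the same key observation that Remark~\ref{rem:WZD} converts the metric hypothesis into a weak Zariski decomposition, so that \cite[Theorem 1.5]{Bir12b} produces a minimal model without assuming the existence of minimal models in dimension $n$; both then invoke Corollary~\ref{cor:algsinMMP}, the rational-singularities argument for $\chi(\cdot,\OO)$, and finally Theorem~\ref{thm:nv0} in the non-uniruled case and \cite[Theorem~3.3]{DL15} in the uniruled case. The only genuine difference is organisational: you split on $K_X$ pseudoeffective/not before the MMP, while the paper constructs the minimal model first and then splits on uniruledness; these are the same dichotomy for terminal $\Q$-factorial $X$ via \cite[Corollary 0.3]{BDPP}.

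Two small corrections are needed. First, a terminalisation $\pi\colon X'\to X$ satisfies $K_{X'}+\Delta'=\pi^*(K_X+\Delta)$ with $\Delta'=\pi_*^{-1}\Delta+\sum a_iE_i$ picking up exceptional contributions; your formula $K_{X'}+\pi_*^{-1}\Delta=\pi^*(K_X+\Delta)$ is correct only for a small $\Q$-factorialisation. Second, and more substantively, terminality is \emph{not} preserved along a $(K_X+\Delta)$-MMP when components of $\Delta$ get contracted (e.g.\ contracting the negative section with a fractional boundary on a Hirzebruch surface produces a non-terminal quotient singularity). Theorem~\ref{thm:nv0} requires a terminal pair, so after running the MMP in your Case 2 you must pass to a terminalisation \emph{again} before applying it, exactly as the paper does (``By passing to a terminalisation, we may assume that $(X_m,\Delta_m)$ is a terminal pair''). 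Terminalising only at the start does not suffice. With this second terminalisation inserted, and noting — as the paper does — that termination of klt flips in dimensions $\leq n-1$ gives termination of dlt flips via special termination (which is what \cite{Bir12b} actually needs), your argument is complete.
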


\begin{proof}
By \cite[Corollary 1.2]{HMX14} it suffices to show that $\kappa(X,K_X+\Delta)\geq0$. 

By Remark \ref{rem:WZD}, the pair $(X,\Delta)$ has a weak Zariski decomposition, hence it has a minimal model by \cite[Theorem 1.5]{Bir12b}; note that we only need the termination of klt flips in dimensions at most $n-1$, since the termination of klt flips implies the termination of dlt flips by the special termination \cite{Fuj07a}. Let $(X_m,\Delta_m)$ be a minimal model of $(X,\Delta)$. Then $K_{X_m}+\Delta_m$ has a singular metric with generalised algebraic singularities and semipositive curvature current by Corollary \ref{cor:algsinMMP}. Since klt pairs have rational singularities, the Euler-Poincar\'e characteristic of the structure sheaf is preserved during any MMP of klt pairs, hence 
$$\chi(X_m,\OO_{X_m})=\chi(X,\OO_X)\neq0.$$ By passing to a terminalisation, we may assume that $(X_m,\Delta_m)$ is a terminal pair. If $X$ is uniruled, then $\kappa(X_m,K_{X_m}+\Delta_m)\geq0$ by \cite[Theorem 3.3]{DL15}. If $X$ is not uniruled, then $K_X$ is pseudoeffective by \cite[Corollary 0.3]{BDPP}, hence $\kappa(X,K_X+\Delta)=\kappa(X_m,K_{X_m}+\Delta_m)\geq0$ by Theorem \ref{thm:nv0}.
\end{proof}

\begin{thm}\label{thm:Supportalgsings}
Assume the termination of flips in dimensions at most $n-1$, and the Abundance Conjecture for klt pairs in dimensions at most $n-1$. 

Let $(X,\Delta)$ be an $n$-dimensional $\Q$-factorial terminal pair such that $K_X+\Delta$ is pseudoeffective, and let $L$ be a nef $\Q$-divisor on $X$. Suppose that $K_X+\Delta+L$ has a singular metric with generalised algebraic singularities and semipositive curvature current and that $\chi(X,\OO_X)\neq0$. Assume that there exists an effective $\Q$-divisor $D$ such that
$$K_X + \Delta \sim_\Q D \geq 0 \quad \text{and} \quad \Supp\Delta\subseteq\Supp D.$$
Then for every $\Q$-divisor $L_0$ with $L\equiv L_0$ there exists a rational $t_0>0$ such that we have $\kappa(X,K_X+\Delta+tL_0)\geq0$ for every $0<t\leq t_0$.
\end{thm}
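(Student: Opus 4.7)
The plan is to mirror the proof of Theorem~\ref{thm:Support}, replacing its final invocation of the Weak Generalised Nonvanishing Conjecture with Theorem~\ref{thm:nv0}. By the numerical invariance of the metric hypothesis I first reduce to $L_0=L$. As in Steps~1--2 of Theorem~\ref{thm:Support}, a log resolution of $(X,D)$ followed by a perturbation and rounding up then reduces to the case where $(X,\Delta)$ is log smooth, $\Delta$ is reduced, $K_X+\Delta\sim_\Q D\geq 0$, and $\Supp\Delta=\Supp D$. Throughout these reductions $\chi(X,\OO_X)\neq 0$ is preserved (klt singularities are rational), and the metric on $K_X+\Delta+L$ pulls back to one with generalised algebraic singularities.

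Next I take the normalised $m$-fold cyclic cover $\pi\colon X''\to X$ branched along $mD$, with $m$ minimal so that $m(K_X+\Delta)\sim mD$, exactly as in Steps~2 and~4 of Theorem~\ref{thm:Support}. This produces a klt pair $(X'',\Delta'')$ with $K_{X''}+\Delta''\sim_\Q D''\geq 0$, $\Supp\Delta''=\Supp D''$, and crucially $\kappa(X'',K_{X''})\geq 0$; in particular $X''$ is not uniruled and every sequence of flips of $(X'',\Delta'')$ terminates by \cite[Corollary~1.2]{HMX14}. Proposition~\ref{pro:contocon} followed by a $(K+\Delta)$-MMP with scaling as in \cite[Lemma~5.2]{LP18a} yields a birational contraction $\theta\colon X''\dashrightarrow Y_{\min}$ such that both $K_{Y_{\min}}+\Delta_{\min}$ and $K_{Y_{\min}}+\Delta_{\min}+L_{\min}$ are nef, and by Corollary~\ref{cor:algsinMMP} the metrics with generalised algebraic singularities descend to $Y_{\min}$. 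After terminalising, for each rational $t\in(0,1]$ the identity
\[
K_{Y_{\min}}+\Delta_{\min}+tL_{\min}=(1-t)(K_{Y_{\min}}+\Delta_{\min})+t(K_{Y_{\min}}+\Delta_{\min}+L_{\min})
\]
exhibits the left-hand side as a convex combination of two divisors carrying metrics with generalised algebraic singularities, so it inherits such a metric; since $K_{Y_{\min}}$ is pseudoeffective (as $X''$ is not uniruled), Theorem~\ref{thm:nv0}(i) then yields $\kappa(Y_{\min},K_{Y_{\min}}+\Delta_{\min}+tL_{\min})\geq 0$. Pulling this back through the MMP and the cyclic cover, and rescaling, produces $\kappa(X,K_X+\Delta+tL_0)\geq 0$ for every rational $0<t\leq t_0$.

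The main obstacle is verifying the Euler characteristic hypothesis $\chi(Y_{\min},\OO_{Y_{\min}})\neq 0$ required by Theorem~\ref{thm:nv0}. While MMP and terminalisation preserve $\chi$ on klt pairs (by rational singularities), the cyclic cover step transforms it according to
\[
\chi(X'',\OO_{X''})=\sum_{i=0}^{m-1}\chi\bigl(X,\OO_X(-iD)\bigr),
\]
and the nonvanishing of this sum does not follow from $\chi(X,\OO_X)\neq 0$ alone. Securing this nonvanishing --- for instance by a judicious choice of $m$, by a direct cohomological argument on $X''$, or by sidestepping the cyclic cover when $X$ is already non-uniruled and separately handling the uniruled case by a \cite{DL15}-style reduction using $\tau=\min\{t\mid K_X+t\Delta\text{ pseudoeffective}\}$ --- is the technical heart of the proof and is where the real work lies.
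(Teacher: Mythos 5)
You have identified the crux of the matter correctly: after the cyclic cover, one cannot simply apply Theorem~\ref{thm:nv0} to a minimal model $Y_{\min}$ of $(X'',\Delta'')$, because the hypothesis $\chi(Y_{\min},\OO_{Y_{\min}})\neq 0$ is not inherited from $\chi(X,\OO_X)\neq 0$. But your proposal stops at flagging this obstacle and listing candidate workarounds, none of which is the one the paper actually uses, and none of which you carry through. A judicious choice of $m$ is not available --- $m$ is forced to be minimal so that the $m$-fold cover is irreducible; and the $\tau$-reduction via \cite{DL15} is how the paper handles the \emph{complementary} situation where $K_X$ fails to be pseudoeffective (Theorem~\ref{thm:tau<1algsings}), not a substitute for the cyclic cover in the non-uniruled case dealt with here. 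So the proposal has a genuine gap, not merely a missing verification.

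The paper's resolution is to avoid ever verifying $\chi(Y_{\min},\OO_{Y_{\min}})\neq 0$. Instead of invoking Theorem~\ref{thm:nv0} as a black box on $Y_{\min}$, the paper re-runs the hard Lefschetz argument from the proof of Theorem~\ref{thm:nv0} on a smooth model $T$ that simultaneously resolves $\theta\circ\varphi\colon X''\dashrightarrow Y_{\min}$ and maps, via $g=\pi\circ p\colon T\to X'$, through the finite cyclic cover back to $X'$. Writing $M_T\equiv g^*\Theta+\sum\lambda_i\,g^*[D_i]$, Lemma~\ref{lem:Siu} gives $\sum\lambda_i\,g^*D_i\geq E$ for the $q$-exceptional correction $E$, so that the divisor $N:=(\sum\lambda_i\,g^*D_i-E)-q^*M_{\min}$ is of the shape to which Theorem~\ref{thm:DPS} and Serre duality apply, yielding $H^p(T,\OO_T(\ell N))=0$ for all $p$ and divisible $\ell$. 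Since $N\sim_\Q p^*\pi^*(\sum\lambda_iD_i-M)$ and $X''$ has rational singularities, this vanishing pushes down to $X''$; and then, crucially, because $\pi$ is \emph{finite}, \cite[Lemma~4.1.14]{Laz04} descends it further to $X'$. Only at this point is an Euler characteristic used, and it is $\chi(X',\OO_{X'})=\chi(X,\OO_X)$, which is nonzero by hypothesis. This is the step your proposal is missing: the contradiction must be derived on $X'$ (where $\chi$ is controlled), and the finiteness of the cyclic cover is exactly what makes that descent possible.
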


\begin{proof}
The assumptions imply that $K_X+\Delta+L_0$ has a singular metric with generalised algebraic singularities and semipositive curvature current. Therefore, we may assume that $L=L_0$. We follow closely the proof of Theorem \ref{thm:Support}. 

\medskip

\emph{Step 1.}
Let $f\colon X'\to X$ be a log resolution of the pair $(X,D)$, and we write
$$K_{X'}+\Delta_1\sim_\Q f^*(K_X+\Delta)+F',$$
where $\Delta_1$ and $F'$ are effective $\Q$-divisors with no common components. Denote $D_1:=f^*D+F'$ and $L':=f^*L$, so that $K_{X'}+\Delta_1\sim_\Q D_1$, and the divisors 
$$K_{X'}+\Delta_1\quad\text{and}\quad K_{X'}+\Delta_1+L'$$
have singular metrics with generalised algebraic singularities and semipositive curvature currents. We may also assume that these metrics descend to $X'$, see Section \ref{sec:prelim}.

Then Step 1 of the proof of Theorem \ref{thm:Support} shows that there exists a $\Q$-divisor $H\geq0$ with $\Supp H=\Supp D_1$ such that, if we denote $\Delta':=\Delta_1+H$ and $D':=D_1+H$, then the pair $(X',\Delta')$ is log smooth with $\Delta'$ reduced, and
$$K_{X'}+\Delta'\sim_\Q D'\geq0\quad \text{with}\quad \Supp\Delta'=\Supp D'.$$
In particular, there exists $0<\eta_0\ll1$ such that for all $0\leq\eta\leq\eta_0$ the divisors 
$$K_{X'}+(1-\eta)\Delta'\quad \text{and}\quad K_{X'}+(1-\eta)\Delta'+L'$$
have singular metrics with generalised algebraic singularities and semipositive curvature currents which descend to $X'$. As in Step 1 of the proof of Theorem \ref{thm:Support}, it suffices to show that there exists a rational $t_0'$ such that
\begin{equation}\label{eq:claimclaim1}
\kappa(X',D'+tL')\geq0\quad\text{for every }0<t\leq t_0'.
\end{equation}

\medskip

\emph{Step 2.}
As in Step 2 of the proof of Theorem \ref{thm:Support} we construct a cyclic covering $\pi\colon X''\to X'$, a reduced divisor $\Delta_1''$ on $X''$ such that 
$$K_{X''}+\Delta_1''=\pi^*(K_{X'}+\Delta'),$$
and an effective Cartier divisor $D_1'':=\pi^*D'$ on $X''$ such that
$$K_{X''}+\Delta_1''\sim D_1''\geq0\quad \text{with}\quad \Supp\Delta_1''=\Supp D_1''.$$
Pick a rational number $0<\delta\leq\eta_0$ such that 
$$\Delta'':=\Delta_1''-\delta\pi^*\Delta'\geq0,\ D'':=D_1''-\delta\pi^*\Delta'\geq0\text{ and } \Supp D''=\Supp D_1''.$$
Then
\begin{equation}\label{eq:331}
K_{X''}+\Delta''=\pi^*\big(K_{X'}+(1-\delta)\Delta'\big)\quad\text{and}\quad K_{X''}+\Delta''\sim_\Q D'',
\end{equation}
and therefore, the pair $(X'',\Delta'')$ is klt by \cite[Proposition 5.20]{KM98}. Denote $L'':=\pi^*L'$. Then by Step 1 the divisors 
$$K_{X''}+\Delta''\quad\text{and}\quad K_{X''}+\Delta''+L''$$
have singular metrics with generalised algebraic singularities and semipositive curvature currents. Moreover, $X''$ is not uniruled and $\kappa(X'',K_{X''})\geq0$ as in Step 4 of the proof of Theorem \ref{thm:Support}.

\medskip

\emph{Step 3.}
Since we are assuming the termination of flips in dimension $n-1$, \cite[Corollary 1.2]{HMX14} implies that any sequence of flips of the pair $(X'',\Delta'')$ terminates. Therefore, by Proposition \ref{pro:contocon} there exists an $L''$-trivial $(K_{X''}+\Delta'')$-MMP $\varphi\colon X''\dashrightarrow Y$ such that $K_Y+\varphi_*\Delta''+m\varphi_*L''$ is nef for some fixed $m\gg0$. Denote $\Delta_Y:=\varphi_*\Delta''$ and $L_Y:=\varphi_*L''$. 

As in the previous paragraph, any sequence of flips of the pair $(Y,\Delta_Y)$ terminates. As in Step 1 and the beginning of Step 2 of the proof of \cite[Lemma 5.2]{LP18a}, there is a birational contraction
$$\theta\colon (Y,\Delta_Y)\dashrightarrow (Y_{\min},\Delta_{\min})$$ 
which is a composition of a sequence of operations of a $(K_Y+\Delta_Y)$-MMP, a positive rational number $\lambda$ and a divisor $L_{\min}:=\theta_*(mL_Y)$ on $Y_{\min}$ such that:
\begin{enumerate}
\item[(a)] $K_{Y_{\min}}+\Delta_{\min}$ is nef and $s(K_{Y_{\min}}+\Delta_{\min})+L_{\min}$ is nef for all $s\geq\lambda$,
\item[(b)] the map $\theta$ is $\big(s(K_Y+\Delta_Y)+mL_Y)$-negative for $s>\lambda$.
\end{enumerate}

Set $\mu:=\max\{\lambda,m\}$. Then by Step 1 the divisor 
$$M:=(\mu+1)(K_{X'}+(1-\delta)\Delta')+mL'$$
has a singular metric with generalised algebraic singularities and semipositive curvature current which descends to $X'$. 

\medskip

\emph{Step 4.}
By assumption, we have the Siu decomposition
$$M \equiv \Theta + \sum \lambda_i [D_i],$$ 
where $\Theta\geq0$ has vanishing Lelong numbers, $\lambda_i$ are positive rational numbers and $\sum D_i$ is a simple normal crossings divisor. 

Let $(p,q)\colon T\to X''\times Y_{\min}$ be a resolution of indeterminacies of the birational map $\theta\circ\varphi\colon X''\dashrightarrow Y_{\min}$. Set $g := \pi\circ p\colon T\to X'$. 
\[
\xymatrix{ 
T \ar[r]_p \ar@/^1.0pc/[rrr]^q \ar[dr]_g & X'' \ar[d]^{\pi} \ar@{-->}[rr]_{\theta\circ\varphi} && Y_{\min} \\
 & X' &&
}
\]
Then 
$$ M_T := g^*M \equiv g^*\Theta + \sum \lambda_i g^*[D_i].$$
We may assume that $\sum \lambda_i g^*D_i$ has simple normal crossings support. 

\medskip

\emph{Step 5.}
Denote 
$$M'':=\pi^*M\quad\text{and}\quad M_{\min}:=(\theta\circ\varphi)_*M''=q_*M_T.$$
Then by \eqref{eq:331} we have
$$M''=(\mu+1)(K_{X''}+\Delta'')+mL'',$$
and hence
$$M_{\min}=(\mu+1)(K_{Y_{\min}}+\Delta_{\min})+L_{\min}.$$
The divisor $M_{\min}$ is nef by (a).

\medskip

We claim that $\kappa(Y_{\min},M_{\min})\geq0$. The claim immediately implies the theorem: indeed, then $\kappa(X',M')=\kappa(X'',M'')\geq0$ by (b), which gives \eqref{eq:claimclaim1}, as desired.

\medskip

\emph{Step 6.}
It remains to prove the claim. Assume by contradiction, that $\kappa(Y_{\min},M_{\min})=-\infty$. Since $M_T=p^*M''$, by (b) there exists an effective $q$-exceptional $\Q$-divisor $E$ on $T$ such that
$$M_T\sim_\Q q^*M_{\min}+E.$$
The proof of Lemma \ref{lem:Siu} shows that $\sum \lambda_i g^*D_i\geq E$, so that
$$\textstyle q^*M_{\min}\equiv g^*\Theta + \big(\sum \lambda_i g^*D_i-E\big).$$
Set 
$$\textstyle N := (\sum \lambda_i g^*D_i-E) - q^*M_{\min}.$$ 
Then as in the proof of Theorem \ref{thm:nv0}, for all $p \geq 0$ and all $\ell$ sufficiently divisible we have
\begin{equation}\label{eq:N}
H^p\big(T, \OO_T(\ell N)\big) = 0.
\end{equation}
Since 
$$\textstyle N \sim_\Q \sum \lambda_i g^*D_i-M_T=p^*\pi^*\big(\sum \lambda_i D_i-M\big)$$
and since $X''$ has rational singularities by Step 2, Equation \eqref{eq:N} implies
$$\textstyle H^p\big(X'', \OO_{X''}\big(\ell\pi^*\big(\sum \lambda_iD_i - M\big)\big)\big) = 0$$ 
for all $p \geq 0$ and all $\ell$ sufficiently divisible. As $\pi$ is finite, \cite[Lemma 4.1.14]{Laz04} gives
$$\textstyle H^p\big(X', \OO_{X'}\big(\ell\big(\sum \lambda_i D_i - M\big)\big)\big) = 0$$
for all $p \geq 0$ and all $\ell$ sufficiently divisible. As in the proof of Theorem \ref{thm:nv0}, this implies 
$$\chi(X',\OO_{X'})=0,$$
hence $\chi(X,\OO_X)=0$ since $X$ has rational singularities. This is a contradiction which proves the claim and finishes the proof of the theorem.
\end{proof}

\begin{thm}\label{thm:tau<1algsings}
Assume the termination of flips in dimensions at most $n-1$, and the Abundance Conjecture in dimensions at most $n-1$.

Let $(X,\Delta)$ be an $n$-dimensional $\Q$-factorial terminal pair such that $K_X+\Delta$ is pseudoeffective, let $L$ be a nef $\Q$-divisor on $X$, and assume that there exists a minimal model of $(X,\Delta)$. Suppose that $K_X+\Delta+L$ has a singular metric with generalised algebraic singularities and semipositive curvature current and that $\chi(X,\OO_X)\neq0$. Assume that $K_X$ is not pseudoeffective.

Then for every $\Q$-divisor $L_0$ with $L\equiv L_0$ there exists a rational $t_0>0$ such that we have $\kappa(X,K_X+\Delta+tL_0)\geq0$ for every $0<t\leq t_0$.
\end{thm}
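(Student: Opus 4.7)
The strategy follows the proof of Lemma~\ref{lem:Knotpsef}, with Theorem~\ref{thm:Support} replaced by Theorem~\ref{thm:Supportalgsings} and Lemma~\ref{lem:termination} replaced by Lemma~\ref{lem:algsing}; the new feature is that we must track the metric with generalised algebraic singularities through the construction.

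Since $(X,\Delta)$ is terminal with $K_X+\Delta$ pseudoeffective and $K_X$ not pseudoeffective, the threshold $\tau:=\min\{t\in\R\mid K_X+t\Delta\text{ is pseudoeffective}\}$ satisfies $0<\tau\le1$, and \cite[Theorem~3.3]{DL15} yields $K_X+\tau\Delta\sim_\Q D_\tau$ for some effective $\Q$-divisor $D_\tau$. Hence $K_X+\Delta\sim_\Q D:=D_\tau+(1-\tau)\Delta\ge0$. Pick a rational $0<\mu\ll1$ so that $(X,\Delta')$ with $\Delta':=(1+\mu)\Delta$ is terminal, and set $D':=D+\mu\Delta$; then $K_X+\Delta'\sim_\Q D'\ge0$ and $\Supp\Delta'\subseteq\Supp D'$. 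The identity $K_X+\Delta'+L=(K_X+\Delta+L)+\mu\Delta$ allows us to add the current of integration $\mu[\Delta]$ to the given metric on $K_X+\Delta+L$, producing a metric with generalised algebraic singularities and semipositive curvature current on $K_X+\Delta'+L$. Thus all hypotheses of Theorem~\ref{thm:Supportalgsings} are in place for $(X,\Delta')$ with $L$.

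Given $L_0\equiv L$, Theorem~\ref{thm:Supportalgsings} produces a rational $t^*>0$, which we may take $\le1$, together with an effective $\Q$-divisor $G$ such that $K_X+\Delta'+t^*L_0\sim_\Q G$. Choose a rational $0<\delta\ll1$ so that $(X,\Delta_X)$ with $\Delta_X:=(1-\delta)\Delta+(\delta/\mu)G$ is klt. Using $G\sim_\Q K_X+(1+\mu)\Delta+t^*L_0$, a direct computation gives
$$K_X+\Delta_X\sim_\Q(1+\delta/\mu)\bigl(K_X+\Delta+\tfrac{\delta t^*}{\mu+\delta}L_0\bigr),$$
which may be rewritten as
$$K_X+\Delta_X\sim_\Q\bigl(1+\tfrac{\delta(1-t^*)}{\mu}\bigr)(K_X+\Delta)+\tfrac{\delta t^*}{\mu}(K_X+\Delta+L_0).$$
Both coefficients are positive because $t^*\le1$. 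The first summand carries the current $[D]$ (since $K_X+\Delta\sim_\Q D\ge0$) and the second carries the given metric with generalised algebraic singularities (transferred from $K_X+\Delta+L$ via $L_0\equiv L$); their positive combination is a metric of the same type on $K_X+\Delta_X$. Lemma~\ref{lem:algsing}, applicable since $\chi(X,\OO_X)\ne0$, then yields $\kappa(X,K_X+\Delta_X)\ge0$, and hence $\kappa(X,K_X+\Delta+s_0L_0)\ge0$ for $s_0:=\tfrac{\delta t^*}{\mu+\delta}$.

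Finally, for any $0<s\le s_0$, writing $K_X+\Delta+sL_0=(s/s_0)(K_X+\Delta+s_0L_0)+(1-s/s_0)(K_X+\Delta)$ exhibits it as a positive $\Q$-linear combination of $\sim_\Q$-effective divisors, hence itself $\sim_\Q$-effective. This gives the theorem with $t_0:=s_0$. The main obstacle is ensuring that the metric with generalised algebraic singularities survives on $K_X+\Delta_X$; this is precisely what forces us to rewrite $K_X+\Delta_X$ as a positive combination of $K_X+\Delta$ and $K_X+\Delta+L_0$, and it is the reason for restricting $t^*\le1$.
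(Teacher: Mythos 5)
Your proof is correct and follows essentially the same route the paper takes: the paper's argument is precisely ``run the proof of Lemma~\ref{lem:Knotpsef}, replacing Theorem~\ref{thm:Support} by Theorem~\ref{thm:Supportalgsings} and Lemma~\ref{lem:termination} by Lemma~\ref{lem:algsing},'' and you have filled in the details faithfully. Your explicit rewriting of $K_X+\Delta_X$ as a positive $\Q$-linear combination of $K_X+\Delta$ and $K_X+\Delta+L_0$ (with $t^*\le 1$) to ensure the metric with generalised algebraic singularities carries over to $K_X+\Delta_X$ is exactly the point that makes Lemma~\ref{lem:algsing} applicable, and is the key detail the paper leaves implicit.
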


\begin{proof}
The assumptions imply that $K_X+\Delta+L_0$ has a singular metric with generalised algebraic singularities and semipositive curvature current. Therefore, we may assume that $L=L_0$. The proof is analogous to that of Lemma \ref{lem:Knotpsef}, by invoking Theorem \ref{thm:Supportalgsings} instead of Theorem \ref{thm:Support}, and by invoking Lemma \ref{lem:algsing} instead of Lemma \ref{lem:termination}.
\end{proof}

Finally, we have all the ingredients to prove Theorem \ref{thm:C}; Corollary \ref{cor:D} is an immediate consequence.

\begin{proof}[Proof of Theorem \ref{thm:C}]
The assumptions imply that $K_X+\Delta+L'$ has a singular metric with generalised algebraic singularities and semipositive curvature current. Therefore, we may assume that $L=L'$. 

\medskip

\emph{Step 1.}
Note that the assumptions imply that for each $0\leq t\leq1$, the divisor $K_X+\Delta+tL$ has a singular metric with generalised algebraic singularities and semipositive curvature current.

By Lemma \ref{lem:algsing} we have $\kappa(X,K_X+\Delta)\geq0$ and every sequence of flips of $(X,\Delta)$ terminates. By Proposition \ref{pro:contocon}, by Lemma \ref{lem:MMPnumeff} and by Corollary \ref{cor:algsinMMP} we may assume that $K_X+\Delta+L$ is nef.

We follow the proof of Theorem \ref{thm:induction2} closely. However, note that the conclusion in part (i) is stronger than that of Theorem \ref{thm:mainreduction}(i). We are thus somewhat more careful how we conduct the proof.

\medskip

\emph{Step 2.}
It suffices to show (i). Indeed, if $n(X,K_X+\Delta+L)<n$, then in this case of part (ii) we conclude by Theorem \ref{thm:B}. Thus, we may (for part (ii)) assume that $n(X,K_X+\Delta+L)=n$. Then this is done analogously as in Step 2 of the proof of Theorem \ref{thm:induction2}.

\medskip

\emph{Step 3.}
Therefore, from now on we prove (i). Then analogously as in Step 3 of the proof of Theorem \ref{thm:induction2}, by using that metrics with generalised algebraic singularities and semipositive curvature currents are preserved by an MMP by Corollary \ref{cor:algsinMMP}, we show that we may additionally assume the following:

\medskip

\emph{Assumption 1.} The pair $(X,\Delta)$ is $\Q$-factorial and terminal, and $K_X+\Delta$ is nef.

\medskip

We only note that we do not prove (but we also do not need) item (c) in Step 3 of the proof of Theorem \ref{thm:induction2}.

Now, if $K_X$ is pseudoeffective, then (i) follows from Theorem \ref{thm:nv0}. If $K_X$ is not pseudoeffective, then (i) follows from Theorem \ref{thm:tau<1algsings}. This completes the proof.
\end{proof}

When the variety is not uniruled, or if we add an additional assumption, we get more:

\begin{thm}
Assume the termination of flips in dimensions at most $n-1$, and the Abundance Conjecture in dimensions at most $n-1$.

Let $(X,\Delta)$ be a projective klt pair of dimension $n$ such that $K_X+\Delta$ is pseudoeffective, and let $L$ be a nef $\Q$-divisor on $X$. Suppose that $K_X+\Delta$ and $K_X+\Delta+L$ have singular metrics with generalised algebraic singularities and semipositive curvature currents, and that $\chi(X,\OO_X)\neq0$.

Assume either that $X$ is not uniruled, or assume the semiampleness part of the Abundance Conjecture in dimension $n$. Then there exists a positive rational number $t_0$ such that for every $\Q$-divisor $L'$ with $L\equiv L'$ we have
$$\kappa(X,K_X+\Delta+tL')\geq0\quad\text{for every }0\leq t\leq t_0.$$
\end{thm}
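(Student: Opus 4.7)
The plan proceeds by first establishing that for every rational $t\geq 0$ and every $\Q$-divisor $L'\equiv L$, the divisor $K_X+\Delta+tL'$ admits a singular metric with generalised algebraic singularities and semipositive curvature current: for $t\in[0,1]$, take the convex combination of the given metrics on $K_X+\Delta$ and $K_X+\Delta+L$; for $t>1$, add a smooth semipositive hermitian metric on $L$ (available by nefness and hence of vanishing Lelong number); and to pass from $L$ to $L'$, tensor with the smooth flat metric on the numerically trivial divisor $L'-L$. Hence Theorem \ref{thm:C} applies to $(X,\Delta,L')$ for every $L'\equiv L$, yielding a non-uniform $t_0(L')$. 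By Lemma \ref{lem:algsing} we also have $\kappa(X,K_X+\Delta)\geq 0$ and every sequence of flips of $(X,\Delta)$ terminates, so convex interpolation between effective representatives of $K_X+\Delta$ and of $K_X+\Delta+t_1L'$ reduces the problem to producing a single uniform positive rational $t_1$ with $\kappa(X,K_X+\Delta+t_1L')\geq 0$ for every $L'\equiv L$.

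\emph{Case (a): $X$ not uniruled.} Then $K_X$ is pseudoeffective. Following Step~3 of the proof of Theorem \ref{thm:induction2} (as used also in the proof of Theorem \ref{thm:C}), after a terminalisation and a small $\Q$-factorialisation I run the $(K_X+\Delta)$-MMP with scaling of $L$ to produce $\theta\colon X\dashrightarrow X_{\min}$ with $K_{X_{\min}}+\Delta_{\min}$ nef, together with a positive rational $\lambda$ such that $\theta$ is $\bigl(s(K_X+\Delta)+L\bigr)$-negative for every $s>\lambda$. The key observation, which was not exploited in Theorem \ref{thm:C}, is that the intersection conditions defining negativity on each contracted extremal ray are purely numerical; hence $\theta$ is automatically $\bigl(s(K_X+\Delta)+L'\bigr)$-negative for every $L'\equiv L$ and every $s>\lambda$, and consequently is $\bigl(K_X+\Delta+tL'\bigr)$-non-positive for every such $L'$ and every rational $t$ in a uniform positive range. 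Kodaira dimension is therefore preserved across $\theta$ on this range. On $X_{\min}$ apply Theorem \ref{thm:nv0}(i) with $tL_{\min}$ in place of $L_{\min}$ (hypotheses inherited: $K_{X_{\min}}$ pseudoeffective as non-uniruledness is birational, the required metric exists by Corollary \ref{cor:algsinMMP} together with the discussion above, and $\chi(X_{\min},\OO_{X_{\min}})=\chi(X,\OO_X)\neq 0$ by rationality of klt singularities): this gives $\kappa(X_{\min},K_{X_{\min}}+\Delta_{\min}+M)\geq 0$ for every $M\equiv tL_{\min}$. Specialising to $M=t\theta_*L'$ and transferring back through $\theta$ yields the required uniform conclusion.

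\emph{Case (b): semiampleness part of Abundance in dimension $n$.} After the same reduction to $K_X+\Delta$ nef (again preserving $(K_X+\Delta+tL')$-non-positivity for all $L'\equiv L$ and small uniform $t$), Theorem \ref{thm:C}(ii) yields num-effectivity of $K_X+\Delta+tL$ for every $t\geq 0$. If $n(X,K_X+\Delta+L)=n$, Step~2 of the proof of Theorem \ref{thm:induction2} in fact produces bigness of $K_X+\Delta+tL$ for every $t>0$; as bigness is a numerical property, $K_X+\Delta+tL'$ is big, hence $\kappa\geq 0$, for every $L'\equiv L$ and every $t>0$, and the conclusion is immediate. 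If instead $n(X,K_X+\Delta+L)<n$, I invoke Theorem \ref{thm:B}: the MMP used therein over the nef reduction is $L$-trivial, hence $L'$-trivial for every $L'\equiv L$, and it descends the problem to a klt pair $(T,\Delta_T)$ of strictly smaller dimension with the nef divisor $\varphi_T^*L_Y$, via the relation $K_{X_{\min}}+\Delta_{\min}+L_{\min}\sim_\Q\tau^*(K_T+\Delta_T+\varphi_T^*L_Y)$. One verifies that the hypotheses of the theorem are inherited on $(T,\Delta_T,\varphi_T^*L_Y)$ — the metrics descend (combining the pullback formula above with a Siu-decomposition argument as in Lemma \ref{lem:Siu}), and $\chi(T,\OO_T)=\chi(X,\OO_X)\neq 0$ by the vanishing of higher direct images in klt fibrations — and the assumed Abundance in dimensions at most $n-1$ supplies its semiampleness part on $T$. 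An inductive application of the theorem in dimension $\leq n-1$ gives a uniform $t_0^T$, which pulls back via $\tau$ and transfers through the $L$-trivial MMP to yield the desired uniform conclusion on $X$.

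The technical heart of the proof, and the main obstacle, is the inductive step in Case (b) with $n(X,K_X+\Delta+L)<n$: one must carefully verify that all the hypotheses of the theorem — most delicately the existence of a metric with generalised algebraic singularities on $K_T+\Delta_T+\varphi_T^*L_Y$ and the nonvanishing of $\chi(T,\OO_T)$ — are inherited by the lower-dimensional pair $(T,\Delta_T)$. By contrast, Case (a) is the cleanest: it reduces directly, via the purely numerical nature of the relevant MMP conditions on contracted extremal rays, to the uniform-in-$L'$ conclusion provided by Theorem \ref{thm:nv0}(i) on a minimal terminal model.
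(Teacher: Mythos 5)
Your Case~(a) follows the paper's route and is correct in substance: the minimal model program $\theta$ run for $K_X+\Delta$ with scaling of $L$ is automatically $\bigl(s(K_X+\Delta)+L'\bigr)$-negative for every $L'\equiv L$, since each step contracts a ray, numerically trivial divisors descend through a contraction, and the negativity conditions are numerical; after this one reduction the constants $\lambda,m$ are fixed once and Theorem~\ref{thm:nv0}(i) produces nonvanishing uniformly over the numerical equivalence class. This is exactly what the paper means by ``the proof is the same as Theorem~\ref{thm:C}, without Theorem~\ref{thm:tau<1algsings}''.

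Your Case~(b), however, departs from the paper and has genuine gaps. When $n(X,K_X+\Delta+L)<n$ you try to descend the problem to the base $T$ of the nef reduction and induct on dimension. Two claims essential to this plan are false.
\begin{enumerate}
\item[(1)] You assert $\chi(T,\OO_T)=\chi(X,\OO_X)$ ``by the vanishing of higher direct images in klt fibrations''. That vanishing only holds for \emph{birational} morphisms between varieties with rational singularities; for a fibration $\tau\colon X_{\min}\to T$ with positive-dimensional fibres the sheaves $R^i\tau_*\OO_{X_{\min}}$ do not vanish in general, and the Euler characteristic is not preserved. A simple example: $X=E\times C$ with $E$ an elliptic curve and $C$ of genus $\geq2$, $\tau$ the projection to $C$; then $\chi(X,\OO_X)=0\neq\chi(C,\OO_C)$.
\item[(2)] Even ignoring (1), you need $L_{\min}':=(\theta\circ\varphi)_*L'$ to be of the form $\tau^*L_T'$ with $L_T'\equiv L_T$, so that the inductive statement on $T$ says anything about $L'$. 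The divisor $L_{\min}'$ is $\tau$-numerically trivial, but need not be $\tau$-$\Q$-linearly trivial: numerically trivial non-torsion divisors restricted to fibres obstruct this (again elliptic fibres are the basic example). So the set of divisors you control on $X_{\min}$ is a proper subset of the numerical class of $L_{\min}$, and you cannot conclude for arbitrary $L'\equiv L$.
\end{enumerate}
The paper's Case~(b) avoids both problems by staying in dimension $n$: it fixes $L'$, runs the two MMPs (which are $L$-trivial, hence $L'$-trivial, so the constants $\lambda$ and $m$ do not depend on $L'$), uses the semiampleness part of Abundance in dimension $n$ together with the rigidity argument as in Theorem~\ref{thm:semipositive}(ii) on $Y_{\min}$ itself, and reads off the explicit uniform value $t_0=m/(\lambda+1)$. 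That is the route you should have taken.

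A further minor point: your setup paragraph claims $L$ has a smooth semipositive hermitian metric because it is nef. This is false---nef line bundles need not admit smooth semipositive metrics (Demailly--Peternell--Schneider give counterexamples on ruled surfaces over elliptic curves). This error is not load-bearing for the theorem at hand (only $0\leq t\leq1$ is used), but the claim should be removed.
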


\begin{proof}
When $X$ is not uniruled, the proof is the same as the proof of Theorem \ref{thm:C}, by noticing that we do not need Theorem \ref{thm:tau<1algsings}.

\medskip

Now, for the rest of the proof we do not assume that $X$ is not uniruled, but we assume the semiampleness part of the Abundance Conjecture in dimension $n$. Fix a $\Q$-divisor $L'$ with $L\equiv L'$. We will show that there exists a rational number $t_0>0$, independent of $L'$, such that $\kappa(X,K_X+\Delta+tL')\geq0$ for all $0\leq t\leq t_0$. 

By Theorem \ref{thm:C} there exists a positive rational number $r_0'$ such that 
$$\kappa(X,K_X+\Delta+tL')\geq0\quad\text{for all }0\leq t\leq r_0'.$$ 

Since we are assuming the termination of flips in dimension $n-1$, by \cite[Corollary 1.2]{HMX14} every sequence of flips of $(X,\Delta)$ terminates. Therefore, by Proposition \ref{pro:contocon} there exists an $L$-trivial (hence $L'$-trivial) $(K_X+\Delta)$-MMP $\varphi\colon X\dashrightarrow Y$ such that $K_Y+\varphi_*\Delta+m\varphi_*L$ is nef for some fixed $m\gg0$. Denote $\Delta_Y:=\varphi_*\Delta$, $L_Y:=\varphi_*L$ and $L_Y':=\varphi_*L'$. 

As in the previous paragraph, any sequence of flips of the pair $(Y,\Delta_Y)$ terminates. As in Step 1 and the beginning of Step 2 of the proof of \cite[Lemma 5.2]{LP18a}, there is a birational contraction
$$\theta\colon (Y,\Delta_Y)\dashrightarrow (Y_{\min},\Delta_{\min})$$ 
which is a composition of a sequence of operations of a $(K_Y+\Delta_Y)$-MMP, a positive rational number $\lambda$ and a divisor $L_{\min}:=\theta_*L_Y$ on $Y_{\min}$ such that:
\begin{enumerate}
\item[(a)] $K_{Y_{\min}}+\Delta_{\min}$ is nef and $s(K_{Y_{\min}}+\Delta_{\min})+mL_{\min}$ is nef for all $s\geq\lambda$,
\item[(b)] the map $\theta$ is $\big(s(K_Y+\Delta_Y)+mL_Y)$-negative for $s>\lambda$.
\end{enumerate}
Denote $L_{\min}':=\theta_*L_Y'$. Then clearly:
\begin{enumerate}
\item[(a$_0$)] $s(K_{Y_{\min}}+\Delta_{\min})+mL_{\min}'$ is nef for all $s\geq\lambda$,
\item[(b$_0$)] the map $\theta$ is $\big(s(K_Y+\Delta_Y)+mL_Y')$-negative for $s>\lambda$.
\end{enumerate}

Denote $M:=\lambda(K_{Y_{\min}}+\Delta_{\min})+mL_{\min}'$. Then $M$ is nef by (a$_0$). Since $\kappa(Y_{\min},K_{Y_{\min}}+\Delta_{\min}+t L_{\min}')\geq0$ for $0\leq t\leq r_0'$, there exist a rational number $0<\delta\leq \min\{r_0',m/\lambda\}$ and an effective divisor $G'$ such that
$$K_{Y_{\min}}+\Delta_{\min}+\delta L_{\min}'\sim_\Q G'.$$
Then $G'$ is nef by (a$_0$), and set $G:=\frac{m}{m-\delta\lambda}G'$. We have
$$\textstyle K_{Y_{\min}}+\Delta_{\min}+\frac{\delta}{m-\delta\lambda}M=\frac{m}{m-\delta\lambda}(K_{Y_{\min}}+\Delta_{\min}+\delta L_{\min}')\sim_\Q G.$$
Fix a rational number $0<\varepsilon\ll1$ such that the pair $(Y_{\min},\Delta_{\min}+\varepsilon G)$ is klt, and note that $K_{Y_{\min}}+\Delta_{\min}+\varepsilon G$ is nef since $K_{Y_{\min}}+\Delta_{\min}$ and $G$ are nef. Since $\kappa(Y_{\min},K_{Y_{\min}}+\Delta_{\min}+\varepsilon G)\geq0$ and since we are assuming semiampleness part of the Abundance Conjecture in dimension $n$, the divisors $K_{Y_{\min}}+\Delta_{\min}$ and $K_{Y_{\min}}+\Delta_{\min}+\varepsilon G$ are semiample. Then we conclude that $K_{Y_{\min}}+\Delta_{\min}+tM$ is semiample for all $t\geq0$ as in the proof of Theorem \ref{thm:semipositive}(ii). In particular, the divisor $(\lambda+1)(K_{Y_{\min}}+\Delta_{\min})+mL_{\min}'$ is semiample. But by (b$_0$) this implies 
$$\textstyle \kappa\big(X,K_X+\Delta+\frac{m}{\lambda+1}L'\big)=\kappa\big(Y,K_Y+\Delta_Y+\frac{m}{\lambda+1}L_Y'\big)\geq0.$$
This finishes the proof.
\end{proof}

We deduce the following surprising fact from Theorem \ref{thm:C} when $L=0$.

\begin{cor}\label{cor:numequiv} 
Assume the termination of flips in dimensions at most $n-1$, the Abundance Conjecture in dimensions at most $n-1$, and the semiampleness part of the Abundance Conjecture in dimension $n$.

Let $(X,\Delta)$ be a projective klt pair of dimension $n$ such that $K_X+\Delta$ is pseudoeffective. Suppose that $K_X+\Delta$ has a singular metric with generalised algebraic singularities and semipositive curvature current. If $\chi(X,\OO_X)\neq0$, then for every $\Q$-Cartier divisor $L$ with $L\equiv 0$ and for every rational number $t\geq0$ we have $\kappa(X,K_X+\Delta+tL)\geq0$. 
\end{cor}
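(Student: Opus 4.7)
The plan is to deduce the corollary from Theorem \ref{thm:C} applied with the auxiliary divisor $L$ in the theorem taken to be $0$, combined with a reduction via the Minimal Model Program to reach a situation in which Theorem \ref{thm:nv0}(i) applies and supplies the required $\Q$-effectivity for \emph{every} $\Q$-divisor numerically equivalent to $K_{X_m}+\Delta_m$, not merely for small multiples of a fixed divisor. First I would invoke Theorem \ref{thm:C}(i) at $t=0$ (equivalently Lemma \ref{lem:algsing}) to obtain $\kappa(X,K_X+\Delta)\geq 0$ and the termination of every $(K_X+\Delta)$-MMP, via the assumed termination in dimension $n-1$ together with \cite[Corollary 1.2]{HMX14}. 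After a terminalisation, which preserves $\chi(X,\OO_X)$ (klt pairs have rational singularities), the singular metric on $K_X+\Delta$ (by pullback), and the numerical class of $L$, I may assume that $(X,\Delta)$ is $\Q$-factorial and terminal, and then run a $(K_X+\Delta)$-MMP $\varphi\colon X\dashrightarrow X_m$ to a minimal model. Because $L\equiv 0$, the divisor $L$ is trivial on every extremal ray contracted by $\varphi$, so $\varphi$ is $L$-trivial; consequently $\varphi$ is $(K_X+\Delta+tL)$-non-positive for every $t\geq 0$ and $L_m:=\varphi_*L\equiv 0$ on $X_m$. The semiampleness part of Abundance in dimension $n$ gives that $K_{X_m}+\Delta_m$ is semiample, and the Kodaira-dimension analogue of Lemma \ref{lem:MMPnumeff} (same resolution-of-indeterminacies argument) reduces the problem to showing $\kappa(X_m,K_{X_m}+\Delta_m+tL_m)\geq 0$ for every $t\geq 0$. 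A further terminalisation lets me assume that $(X_m,\Delta_m)$ itself is $\Q$-factorial and terminal.

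At this stage $K_{X_m}+\Delta_m$ admits a smooth semipositive metric (from semiampleness), $L_m\equiv 0$ admits a flat metric, and their tensor equips $K_{X_m}+\Delta_m+L_m$ with a singular metric with generalised algebraic singularities and semipositive curvature current; moreover $\chi(X_m,\OO_{X_m})=\chi(X,\OO_X)\neq 0$. If $K_{X_m}$ is pseudoeffective, then all hypotheses of Theorem \ref{thm:nv0}(i) are in place with $L_m$ in the role of $L$, and its conclusion provides $\kappa(X_m,K_{X_m}+\Delta_m+L'')\geq 0$ for \emph{every} $\Q$-divisor $L''\equiv L_m$; specialising to $L''=tL_m$ (which is numerically equivalent to $L_m$ because both are numerically zero) settles the case for every $t\geq 0$.

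The main obstacle is the remaining case in which $K_{X_m}$ is not pseudoeffective, so $X_m$ is uniruled and Theorem \ref{thm:nv0}(i) is unavailable. My plan for this case is to exploit the Iitaka fibration $\pi\colon X_m\to Y_m$ of the semiample divisor $K_{X_m}+\Delta_m$, together with Ambro's canonical bundle formula \cite[Theorem 0.2]{Amb05a}, to descend the question to the strictly lower-dimensional base $Y_m$, where the corollary can be concluded by induction on $n$ using the standing hypotheses of termination and Abundance in dimensions at most $n-1$. The technical heart of this step is to establish that $L_m$ is $\Q$-linearly equivalent to $\pi^*L_Y$ for some numerically trivial $\Q$-divisor $L_Y$ on $Y_m$: on the generic klt Calabi-Yau fibre $F$ of $\pi$ one has $L_m|_F\equiv 0$, and an argument in the spirit of Lemma \ref{lem:numlintrivial}, combined with the lower-dimensional semiampleness hypotheses applied to the Calabi-Yau fibres, should yield $L_m|_F\sim_\Q 0$ and hence the desired global descent.
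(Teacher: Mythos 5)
Your proposal is not correct as it stands: the case when $K_{X_m}$ is not pseudoeffective contains a genuine gap, and the paper's proof avoids the case split entirely by a different argument.

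The first half of your plan -- reduce via Theorem~\ref{thm:C} (equivalently Lemma~\ref{lem:algsing}) to a terminal $\Q$-factorial minimal model on which $K_{X_m}+\Delta_m$ is semiample, using that the MMP is $L$-trivial since $L\equiv 0$, and that $\chi(\OO)$ and the Kodaira dimension of $K+\Delta+tL$ are preserved -- is sound. When $K_{X_m}$ is pseudoeffective, applying Theorem~\ref{thm:nv0}(i) with $L'=tL_m$ (observing $tL_m\equiv L_m\equiv 0$) does indeed give $\kappa\geq 0$ directly. That is a legitimate route for that case.

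The serious problem is the uniruled case. Your plan is to establish $L_m|_F\sim_\Q 0$ on the klt Calabi--Yau fibres $F$ of the Iitaka fibration $\pi$ of $K_{X_m}+\Delta_m$, by ``an argument in the spirit of Lemma~\ref{lem:numlintrivial}, combined with the lower-dimensional semiampleness hypotheses applied to the Calabi--Yau fibres''. This does not work: $L_m|_F\equiv 0$ does not imply $L_m|_F\sim_\Q 0$ on a klt Calabi--Yau pair (abelian varieties provide the standard counterexamples, where $\Pic^0$ is nontrivial and numerically trivial bundles need not be torsion); Lemma~\ref{lem:numlintrivial} is specific to $K_X+\Delta$ and does not apply to an arbitrary numerically trivial divisor; and the semiampleness part of Abundance says nothing about nef divisors outside the adjoint class. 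Even if you had $L_m|_F\sim_\Q 0$, passing from fibrewise $\Q$-linear triviality to $L_m\sim_\Q\pi^*L_Y$ is an additional nontrivial step. What actually makes the conclusion go through in this corollary is \emph{global} information: Theorem~\ref{thm:C} gives $\kappa(X_m,K_{X_m}+\Delta_m+t_0L_m)\geq 0$ for some $t_0>0$, and that is what must be fed back in. The paper does exactly this, and in a way that never bifurcates on pseudoeffectivity of $K$: with $G\sim_\Q K_Y+\Delta_Y+t_0L_Y$ effective (hence nef, since $G\equiv K_Y+\Delta_Y$), one forms $K_Y+\Delta_Y+\varepsilon G$ with $(Y,\Delta_Y+\varepsilon G)$ klt, observes that both it and $K_Y+\Delta_Y$ have $\kappa\geq 0$ and are nef, applies the semiampleness part of Abundance in dimension $n$ to get both semiample, and then runs the Rigidity-lemma/Iitaka-fibration comparison exactly as in Theorem~\ref{thm:semipositive}(ii) to conclude $L_Y\sim_\Q\varphi^*(\text{numerically trivial})$, hence semiampleness of $K_Y+\Delta_Y+tL_Y$ for all $t\geq 0$. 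So the correct way to handle your ``main obstacle'' is the double-semiampleness argument, not a Calabi--Yau descent, and it simultaneously supersedes your case analysis.
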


\begin{proof}
Fix a $\Q$-divisor $L$ with $L\equiv 0$. Then by Theorem \ref{thm:C} there exists a positive rational number $t_0$ such that $\kappa(X,K_X+\Delta+tL)\geq0$ for all $0\leq t\leq t_0$. Since we are assuming the termination of flips in dimension $n-1$, by \cite[Corollary 1.2]{HMX14} every sequence of flips of $(X,\Delta)$ terminates. 

Let $\varphi\colon X\dashrightarrow Y$ be any $(K_X+\Delta)$-MMP which terminates with a minimal model $(Y,\Delta_Y)$, where $\Delta_Y:=\varphi_*\Delta$. Denote $L_Y:=\varphi_*L$, and note that this MMP is $L$-trivial. Then there 
exists an effective divisor $G$ such that $K_Y+\Delta_Y+t_0L_Y\sim_\Q G$. Fix a rational number $0<\varepsilon\ll1$ such that the pair $(Y,\Delta_Y+\varepsilon G)$ is klt, and note that $K_Y+\Delta_Y+\varepsilon G$ is nef since both $K_Y+\Delta_Y$ and $G$ are nef. Since $\kappa(Y,K_Y+\Delta_Y+\varepsilon G)\geq0$ and since we are assuming semiampleness part of the Abundance Conjecture in dimension $n$, the divisors $K_Y+\Delta_Y$ and $K_Y+\Delta_Y+\varepsilon G$ are semiample. Then we conclude that $K_Y+\Delta_Y+tL_Y$ is semiample for all $t\geq0$ as in the proof of Theorem \ref{thm:semipositive}(ii), which implies the result.
\end{proof}

We also have the following partial generalisation of Theorem \ref{thm:semipositive}, where we do not assume that the variety is not uniruled:

\begin{thm}\label{thm:semipositive1} 
Assume the termination of flips in dimensions at most $n-1$, and the Abundance Conjecture in dimensions at most $n-1$. 

Let $(X,\Delta)$ be a $\Q$-factorial projective klt pair of dimension $n$, and let $L$ be a nef $\Q$-Cartier divisor on $X$. Assume that $K_X+\Delta$ and $K_X+\Delta+L$ have singular metrics with semipositive curvature currents whose Lelong numbers are all zero. If $\chi(X,\OO_X)\neq0$, then for every $\Q$-divisor $L'$ with $L\equiv L'$ and for every rational number $t\geq0$, the divisor $K_X+\Delta+tL'$ is semiample.
\end{thm}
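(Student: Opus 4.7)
The plan is to mirror the proof of Theorem \ref{thm:semipositive}(ii), with Theorem \ref{thm:C}(i) taking over the role of Theorem \ref{thm:semipositive}(i) as the source of effectivity. First I would reduce to $L'=L$ as in \emph{loc.\ cit.}: since $L'\equiv L$, some positive multiple of $L-L'$ lies in $\Pic^0(X)$ and hence carries a flat metric, and tensoring this with the given metric on $K_X+\Delta+L$ produces a singular metric with semipositive curvature current and vanishing Lelong numbers on $K_X+\Delta+L'$; so we may assume $L'=L$ and aim to prove that $K_X+\Delta+tL$ is semiample for every rational $t\geq0$. Next I would observe that $K_X+\Delta$ and $K_X+\Delta+L$ are nef on $X$: on a log resolution $\pi\colon Y\to X$ their pullbacks carry semipositive currents with vanishing Lelong numbers, hence are nef classes on $Y$ by \cite{Dem92}, and for any curve $C\subseteq X$ a curve $C'\subseteq Y$ mapping onto $C$ with degree $d\geq 1$ gives $(K_X+\Delta)\cdot C=\frac{1}{d}\pi^*(K_X+\Delta)\cdot C'\geq 0$ via the projection formula, and similarly for $K_X+\Delta+L$.

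At this point all hypotheses of Theorem \ref{thm:C}(i) are satisfied (vanishing Lelong numbers is a special case of generalised algebraic singularities, and $K_X+\Delta$ is pseudoeffective by the existence of the semipositive current), so applying it yields a positive rational number $t_0$ and an effective $\Q$-divisor $G$ with $K_X+\Delta+t_0L\sim_\Q G$. Fix $0<\varepsilon\ll 1$ such that $(X,\Delta+\varepsilon G)$ is klt and $1+\varepsilon-\varepsilon t_0>0$. The equivalence
$$K_X+\Delta+\varepsilon G\sim_\Q (1+\varepsilon-\varepsilon t_0)(K_X+\Delta)+\varepsilon t_0(K_X+\Delta+L)$$
displays $K_X+\Delta+\varepsilon G$ as a positive $\Q$-linear combination of $K_X+\Delta$ and $K_X+\Delta+L$, so it is nef and carries a singular metric with semipositive curvature current and vanishing Lelong numbers.

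Now \cite[Theorem 5.1]{GM17} applied to the klt pairs $(X,\Delta)$ and $(X,\Delta+\varepsilon G)$ gives semiampleness of both $K_X+\Delta$ and $K_X+\Delta+\varepsilon G$, and the argument ends with the Rigidity-lemma factoring written out in the last paragraph of Theorem \ref{thm:semipositive}(ii): the corresponding Iitaka fibrations $\varphi_0,\varphi_\varepsilon$ satisfy $\varphi_0=\xi\circ\varphi_\varepsilon$ for some morphism $\xi$, one obtains $L\sim_\Q\varphi_\varepsilon^*L_\varepsilon$ for a nef $\Q$-divisor $L_\varepsilon$ on the base, and semiampleness of $K_X+\Delta+tL$ follows for $t\geq 1$ by pulling back an ample class, then for $t\in[0,1]$ by taking positive combinations of the semiample divisors $K_X+\Delta$ and $K_X+\Delta+L$. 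The main delicate point is the applicability of \cite[Theorem 5.1]{GM17} without the auxiliary hypotheses of terminality and pseudoeffectivity of $K_X$ present in Theorem \ref{thm:semipositive}: those were needed only so that Theorem \ref{thm:nonvanishingForms} could be invoked in part (i) to secure $\kappa\geq 0$, a role now filled by Theorem \ref{thm:C}(i), and once $\kappa\geq 0$ is in hand alongside nefness, klt, and the vanishing-Lelong-number metric, the hypothesis of \cite[Theorem 5.1]{GM17} is fulfilled in our setting too.
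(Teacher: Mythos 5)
Your proposal is correct and follows essentially the same route as the paper: reduce to $L'=L$, invoke Theorem \ref{thm:C}(i) to obtain an effective $G\sim_\Q K_X+\Delta+t_0L$, note that $K_X+\Delta+\varepsilon G$ inherits a metric with semipositive curvature and vanishing Lelong numbers as a positive combination of $K_X+\Delta$ and $K_X+\Delta+L$, apply \cite[Theorem 5.1]{GM17}, and finish with the Rigidity-lemma factoring from the proof of Theorem \ref{thm:semipositive}(ii). The explicit nefness check (via Demailly's result on currents with vanishing Lelong numbers on a resolution and the projection formula) and the displayed convex decomposition of $K_X+\Delta+\varepsilon G$ are details the paper leaves implicit, and your account of why terminality of $(X,\Delta)$ and pseudoeffectivity of $K_X$ can be dropped matches the paper's logic.
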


\begin{proof}
Fix a $\Q$-divisor $L'$ with $L\equiv L'$. Then by Theorem \ref{thm:C} there exists a positive rational number $t_0$ such that $\kappa(X,K_X+\Delta+tL')\geq0$ for all $0\leq t\leq t_0$. We may assume that $t_0\leq1$. Then there exists an effective divisor $G$ such that $K_X+\Delta+t_0L\sim_\Q G$. Fix a rational number $0<\varepsilon\ll1$ such that the pair $(X,\Delta+\varepsilon G)$ is klt, and note that $K_Y+\Delta_Y+\varepsilon G$ has a singular metric with semipositive curvature current whose Lelong numbers are all zero since both $K_X+\Delta$ and $G$ do. Since $\kappa(X,K_X+\Delta+\varepsilon G)\geq0$, by \cite[Theorem 5.1]{GM17} the divisors $K_X+\Delta$ and $K_X+\Delta+\varepsilon G$ are semiample. Then we conclude that $K_X+\Delta+tL'$ is semiample for all $t\geq0$ as in the proof of Theorem \ref{thm:semipositive}(ii).
\end{proof}

We summarise the situation for surfaces and threefolds in the following two results.

\begin{cor}\label{cor:surfaces}
Let $(X,\Delta)$ be a projective klt surface pair such that $K_X+\Delta$ is pseudoeffective, and let $L$ be a nef $\Q$-Cartier divisor on $X$. Assume that $\chi(X,\OO_X)\neq0$.
\begin{enumerate}
\item[(i)] Then for every $\Q$-divisor $D$ with $K_X+\Delta\equiv D$ we have $\kappa(X,D)\geq0$.
\item[(ii)] If $K_X+\Delta+L$ is nef, then every $\Q$-divisor $D$ with $K_X+\Delta+L\equiv D$ is semiample. 
\end{enumerate}
\end{cor}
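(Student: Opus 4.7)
The plan is to apply Corollary~\ref{cor:numequiv} for part (i) and Theorem~\ref{thm:semipositive1} for part (ii), specialised to the surface case. In dimension $2$ all abstract MMP hypotheses of those results hold unconditionally: termination of flips in dimensions $\leq 1$ is vacuous, the Abundance Conjecture in dimensions $\leq 1$ is trivial, and the semiampleness part of the Abundance Conjecture in dimension $2$ for klt pairs is the classical log abundance theorem of Fujita--Kawamata. The only nonformal input to check is the existence on $K_X+\Delta$ and on $K_X+\Delta+L$ of the metrics called for in Definition~\ref{dfn:metrics}. I would produce these from the surface-specific Zariski decomposition: any pseudoeffective $\Q$-divisor $D$ on a smooth projective surface decomposes as $D=P+N$ with $P$ nef and $N\geq 0$, and Demailly's regularisation supplies a closed semipositive $(1,1)$-current $\Theta$ with all Lelong numbers zero in $c_1(P)$, so $\Theta+[N]$ is a semipositive current in $c_1(D)$ whose Siu decomposition matches exactly what Definition~\ref{dfn:metrics} prescribes. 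Pulling back along a log resolution of $(X,\Delta)$ yields metrics with generalised algebraic singularities on $K_X+\Delta$ and on $K_X+\Delta+L$, verifying the metric hypothesis of Corollary~\ref{cor:numequiv} and of Theorem~\ref{thm:C}.

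Part (i) is then immediate: I would set $L_0:=D-(K_X+\Delta)$, which is $\Q$-Cartier and numerically trivial, and apply Corollary~\ref{cor:numequiv} with this $L_0$ and $t=1$, obtaining $\kappa(X,D)=\kappa(X,K_X+\Delta+L_0)\geq 0$.

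For part (ii) my plan is first to apply Theorem~\ref{thm:C}(ii), which produces an effective $\Q$-divisor $E\equiv K_X+\Delta+L$. Choosing $0<\varepsilon\ll 1$ so that $(X,\Delta+\varepsilon E)$ is klt, I would observe that
\[
K_X+(\Delta+\varepsilon E)\;\equiv\;(1+\varepsilon)(K_X+\Delta)+\varepsilon L
\]
is nef and of adjoint form on a klt surface, hence semiample by log abundance. Combining this with the semiampleness of $K_X+\Delta$ (obtained by running a $(K_X+\Delta)$-MMP which on a surface is a finite sequence of divisorial contractions terminating on a minimal model where log abundance applies, together with part~(i) to supply $\kappa\geq 0$), I would arrange a birational model on which both $K+\Delta$ and $K+\Delta+L$ are simultaneously semiample. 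Smooth semipositive Hermitian metrics on these semiample divisors have all Lelong numbers zero, so the metric hypothesis of Theorem~\ref{thm:semipositive1} is met; applying it with $L':=D-(K_X+\Delta)\equiv L$ and $t=1$ would yield semiampleness of $D=K_X+\Delta+L'$.

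The main obstacle will be ensuring that semiampleness transfers correctly between $X$ and the birational model produced by the MMP, on the right numerical class. Unlike num-effectivity, which transfers freely via Lemma~\ref{lem:MMPnumeff}, semiampleness is not automatically preserved by $D$-non-positive birational contractions, so care is required to arrange an $L$-trivial $(K_X+\Delta)$-MMP (Proposition~\ref{pro:contocon}) and then to invoke Theorem~\ref{thm:semipositive1} on $X$ itself via the metrics produced from that MMP. The surface-specific simplicity of the birational geometry (no flips, only divisorial contractions with explicit exceptional loci) is what makes this descent tractable and reduces the argument ultimately to the formal machinery of the main theorems.
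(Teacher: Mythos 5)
Your argument for part (i) is correct and matches the paper's approach: the Zariski decomposition of a pseudoeffective $\Q$-divisor on a surface, after Demailly regularisation of the nef part and pullback to a resolution, supplies the metric with generalised algebraic singularities needed for Corollary~\ref{cor:numequiv}, and that corollary's MMP hypotheses are vacuous in dimension $2$.

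Part (ii), however, has genuine gaps. First, the asserted nefness of $K_X+(\Delta+\varepsilon E)\equiv(1+\varepsilon)(K_X+\Delta)+\varepsilon L$ fails: $K_X+\Delta$ is only assumed pseudoeffective, and adding a small multiple of a nef divisor does not make a non-nef divisor nef, so log abundance cannot be invoked on $X$ at this point. Second, running a $(K_X+\Delta)$-MMP gives semiampleness of $K_Y+\Delta_Y$ on the minimal model $Y$, not of $K_X+\Delta$ on $X$; unlike num-effectivity (Lemma~\ref{lem:MMPnumeff}), semiampleness does not transfer back under a $D$-non-positive contraction because the pullback plus exceptional contribution is in general not semiample, and the circularity you flag at the end is a real obstruction, not merely a technicality. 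Third, and most fundamentally, Theorem~\ref{thm:semipositive1} requires singular metrics whose Lelong numbers \emph{all vanish}; what Zariski decomposition and regularisation provide is a metric with \emph{generalised algebraic singularities}, whose Siu decomposition has a nonzero divisorial part $\sum\lambda_j[D_j]$ whenever the negative part $N$ of the Zariski decomposition is nonzero. A zero-Lelong-number metric on $K_X+\Delta$ would force $K_X+\Delta$ to be nef (by \cite[Corollary 6.4]{Dem92} as noted in Remark~\ref{rem:WZD}), which is not a hypothesis here. The paper closes this gap by citing Corollary~C of \cite{LP18a}, a surface-specific result that furnishes exactly the vanishing-Lelong-number metrics on $K_X+\Delta$ and $K_X+\Delta+L$ needed to run Theorem~\ref{thm:semipositive1}; your proposal does not reconstruct that input, and the workaround via Theorem~\ref{thm:C}(ii) plus MMP plus log abundance does not succeed in doing so.
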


\begin{proof}
Since $\dim X=2$, the assumptions of Corollary \ref{cor:numequiv} are satisfied, which gives (i). Furthermore, the assumptions of Theorem \ref{thm:semipositive1} are satisfied by \cite[Corollary C]{LP18a}, which shows (ii).
\end{proof}

\begin{cor}\label{cor:threefolds}
Let $(X,\Delta)$ be a projective klt threefold pair such that $K_X+\Delta$ is pseudoeffective, and let $L$ be a nef $\Q$-Cartier divisor on $X$. Assume that $\chi(X,\OO_X)\neq0$.
\begin{enumerate}
\item[(i)] Then for every $\Q$-divisor $D$ with $K_X+\Delta\equiv D$ we have $\kappa(X,D)\geq0$.
\item[(ii)]  If $K_X+\Delta$ is nef, then every $\Q$-divisor $D$ with $K_X+\Delta\equiv D$ is semiample. 
\item[(iii)] If $\nu(X,K_X+\Delta)>0$ and $K_X+\Delta$ is nef, then every $\Q$-divisor $D$ with $K_X+\Delta+L\equiv D$ is semiample. 
\end{enumerate}
\end{cor}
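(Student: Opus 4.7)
The plan is to reduce each of the three parts to a direct application of one of the general theorems already established in the paper, with the hypotheses being verified via classical unconditional results on surfaces and threefolds.

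For part (i), I would apply Corollary \ref{cor:numequiv} in dimension $3$ with $L=0$ (or any $L\equiv0$). All the auxiliary hypotheses of that corollary hold for $n=3$: termination of flips on surfaces and the Abundance Conjecture for klt surfaces are classical (Shokurov, Fujita), and the semiampleness part of the Abundance Conjecture for klt threefolds is known by Kawamata and Miyaoka. The one nontrivial input is that $K_X+\Delta$ must carry a singular metric with generalised algebraic singularities and semipositive curvature current. For a klt threefold pair with $K_X+\Delta$ pseudoeffective, the Nonvanishing Theorem yields an effective $\Q$-divisor $E$ with $K_X+\Delta\sim_\Q E$, and then the current of integration $[E]$ (on a log resolution) provides exactly such a metric. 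Corollary \ref{cor:numequiv} then gives $\kappa(X,D)\geq0$ for every $D\equiv K_X+\Delta$, which is (i).

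For part (ii), the plan is to apply Theorem \ref{thm:semipositive1} with $L=L'=0$. Since $K_X+\Delta$ is nef on a klt threefold, the Abundance Theorem of Kawamata--Miyaoka shows that $K_X+\Delta$ is semiample. In particular it admits a smooth semipositive metric, whose Lelong numbers all vanish. After passing to a small $\Q$-factorialisation $\pi\colon\widetilde X\to X$ (which preserves all hypotheses since $\pi^*(K_X+\Delta)=K_{\widetilde X}+\pi_*^{-1}\Delta$, and since semiampleness of $\pi^*D$ on $\widetilde X$ descends to $D$ on $X$ because $\pi$ is an isomorphism in codimension one), Theorem \ref{thm:semipositive1} applies and gives the conclusion.

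For part (iii), the plan is again to apply Theorem \ref{thm:semipositive1}, now with the given $L$. By Abundance in dimension $3$, $K_X+\Delta$ is semiample, so it carries a smooth semipositive metric. What is needed in addition is a singular metric on $K_X+\Delta+L$ with semipositive curvature current and vanishing Lelong numbers; this is exactly where the hypothesis $\nu(X,K_X+\Delta)>0$ enters, through the results of \cite{LP18a} on threefolds (this is the threefold analogue of the input used in Corollary \ref{cor:surfaces} via \cite[Corollary C]{LP18a}). Once both metrics are in place, Theorem \ref{thm:semipositive1} yields that every $D\equiv K_X+\Delta+L$ is semiample. The main obstacle in this strategy is precisely the verification of the semipositive metric with vanishing Lelong numbers on $K_X+\Delta+L$; this is not elementary and relies on the detailed analysis of threefolds with positive numerical dimension carried out in \cite{LP18a}.
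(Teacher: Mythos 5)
Your proposal is correct and follows essentially the same route as the paper: part (i) via Corollary \ref{cor:numequiv} (with the needed metric on $K_X+\Delta$ supplied by threefold Nonvanishing and the current of integration along a log resolution of an effective member), and parts (ii)--(iii) via Theorem \ref{thm:semipositive1}, using threefold Abundance for the metric on $K_X+\Delta$ and \cite[Corollary D]{LP18a} for the metric on $K_X+\Delta+L$ in (iii). Your added care about passing to a small $\Q$-factorialisation before invoking Theorem \ref{thm:semipositive1} is a detail the paper leaves implicit; only a minor notational quibble in (ii): you should take $L=0$ in the theorem and then let $L'$ range over divisors numerically equivalent to $0$ rather than also fixing $L'=0$.
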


\begin{proof}
Since $\dim X=3$, the assumptions of Corollary \ref{cor:numequiv} are satisfied, which gives (i). For (ii), the divisor $K_X + \Delta$ is semiample by the Abundance Conjecture on threefolds, hence Theorem \ref{thm:semipositive1} applies. Finally, for (iii) the assumptions of Theorem \ref{thm:semipositive1} are satisfied by \cite[Corollary D]{LP18a}. 
\end{proof}

\section{Numerical dimension 1} \label{sec:num1}

In case of numerical dimension one, the results of Section 5 hold unconditionally. More precisely, the following theorem and Theorem \ref{thm:G} generalise \cite[Theorem 6.3 and Theorem 6.7]{LP18}.

\begin{thm} \label{thm:nu=1}
Let $(X,\Delta)$ be a projective terminal pair of dimension $n$ such that $K_X + \Delta$ is nef. Let $L$ be a nef Cartier divisor on $X$ and let $t$ be a positive integer such that $t(K_X+\Delta)$ is Cartier. Denote $M=t(K_X+\Delta)+L$. Assume that $\nu(X,M) = 1$ and let $\pi\colon Y\to X$ be a resolution of $X$. Assume that for some positive integer $p$ we have  
$$ H^0\big(Y,(\Omega^1_Y)^{\otimes p} \otimes \OO_Y(m\pi^*M)\big) \neq 0$$
for infinitely many integers $m$. Then $M$ is num-effective. 
\end{thm}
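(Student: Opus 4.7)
The plan is to follow the general strategy of Theorem \ref{thm:nonvanishingForms} by extracting a saturated line subsheaf of $\bigwedge^r(\Omega^1_Y)^{\otimes p}$ via \cite[Lemma 4.1]{LP18}, and then to obtain num-effectivity directly, bypassing the appeal to Theorem \ref{thm:MMPtwist} (which would require lower-dimensional good models), by exploiting the very restrictive hypothesis $\nu(X,M)=1$.

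After a small $\Q$-factorialisation of $X$ we may assume $X$ is $\Q$-factorial. Since $\nu(X,M)=1$, the divisor $M$ is not numerically trivial, so the trivial case arising in the proof of Theorem \ref{thm:nonvanishingForms} does not occur. Applying \cite[Lemma 4.1]{LP18} with $\mathcal E:=(\Omega^1_Y)^{\otimes p}$ and $\mathcal L:=\pi^*\OO_X(M)$ produces a positive integer $r$, a saturated line subsheaf $\mathcal M\hookrightarrow\bigwedge^r\mathcal E$, an infinite subset $\mathcal S\subseteq\N$, and effective integral divisors $N_m\geq0$ for $m\in\mathcal S$ with
$$\OO_Y(N_m)\simeq\mathcal M\otimes\mathcal L^{\otimes m}.$$
In particular, for any $m>m'$ in $\mathcal S$ one has the linear equivalence $N_m-N_{m'}\sim(m-m')\pi^*M$. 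The goal is therefore to locate a pair $m>m'$ in $\mathcal S$ with $N_{m'}\leq N_m$, for then $(m-m')\pi^*M$ is linearly equivalent to the effective divisor $N_m-N_{m'}$, and pushing forward by $\pi$ gives $\kappa(X,M)\geq 0$, hence in particular num-effectivity.

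To produce such a pair, I would follow the pigeonhole strategy of Step~3 of the proof of Theorem \ref{thm:MMPtwist}. The required ingredient is that for large $m\in\mathcal S$ the supports $\Supp N_m$ are contained in a common reduced Weil divisor, after which \cite[Lemma 2.1]{LP18} directly produces $m>m'$ with $N_{m'}\leq N_m$. To establish this uniformity of supports, the hypothesis $\nu(X,M)=1$ is essential: by Lemma \ref{lem:1}, $\nu(Y,\pi^*M)=1$, and so the numerical classes $[N_m]=c_1(\mathcal M)+m[\pi^*M]$ all lie on a single affine line in $N^1(Y)_\R$ along the ray $\R_{+}[\pi^*M]$. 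Combined with Nakayama's divisorial Zariski decomposition --- which, in numerical dimension $1$, severely restricts the prime supports that can appear with positive multiplicity --- this forces only finitely many prime components $\Gamma$ to satisfy $\sigma_\Gamma(N_m)>0$ as $m$ varies, and hence yields the required common reduced support.

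The main obstacle is precisely this support-boundedness: unlike Step~3 of the proof of Theorem \ref{thm:MMPtwist}, where it is deduced from $\kappa(X,N_q)=0$, here one has no a priori Kodaira-dimension control on the $N_m$. The $\nu=1$ hypothesis must therefore be leveraged through Nakayama's theory applied to the affine family of classes $\{[N_m]\}_{m\in\mathcal S}$ in $N^1(Y)_\R$. Once this uniformity is established, the remainder of the argument is purely combinatorial and mirrors the pigeonhole in Step~3 of Theorem \ref{thm:MMPtwist}, producing the desired effective representative of $(m-m')\pi^*M$.
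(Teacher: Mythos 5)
Your first step --- a small $\Q$-factorialisation, then an application of \cite[Lemma 4.1]{LP18} to $\mathcal E=(\Omega^1_Y)^{\otimes p}$ and $\mathcal L=\pi^*\OO_X(M)$, producing a saturated line subsheaf $\mathcal M$, an infinite $\mathcal S\subseteq\N$, and effective divisors $N_m$ with $\OO_Y(N_m)\simeq\mathcal M\otimes\mathcal L^{\otimes m}$ and $N_m-N_{m'}\sim(m-m')\pi^*M$ --- is exactly how the paper begins, and you are right that the goal is to bypass Theorem \ref{thm:MMPtwist}. From there your route diverges, and the key intermediate claim is not established.

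To run the pigeonhole of \cite[Lemma 2.1]{LP18} you need all $\Supp N_m$ (for large $m\in\mathcal S$) to lie inside one fixed reduced Weil divisor, and you propose to deduce this from $\nu(X,M)=1$ via Nakayama's divisorial Zariski decomposition. This does not work. The multiplicities $\sigma_\Gamma$ constrain the negative part of the Zariski decomposition of a \emph{numerical class}, not the support of a chosen effective representative: one always has $\mult_\Gamma N_m\geq\sigma_\Gamma(N_m)$, but a prime $\Gamma$ can occur in $\Supp N_m$ while $\sigma_\Gamma(N_m)=0$ --- for instance any $\Gamma$ that moves in a family, such as a fibre of a fibration. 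Moreover, finiteness of the set $\{\Gamma : \sigma_\Gamma>0\}$ holds for \emph{every} pseudoeffective class, not only those with $\nu=1$, so this cannot be the decisive mechanism. In Step~3 of the proof of Theorem \ref{thm:MMPtwist}, the containment $\Supp N_r\subseteq\Supp N_q$ is deduced from the extra input $\kappa(X,N_q)=0$ together with the identity $(r-p)N_q=(q-p)N_r+(r-q)N_p$, which upgrades a $\Q$-linear equivalence to an equality of divisors; that Kodaira-dimension input is the genuine source of the uniform support and, as you yourself note, is absent here. There is no reason to expect the supports to stabilise in general; for instance if $\pi^*M$ is the pullback of an ample class from a curve $B$ under a fibration $Y\to B$, the $N_m$ can involve ever-new fibres as $m$ grows.

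The paper's actual argument is both cheaper and different: it never compares the $N_m$. It pushes forward to obtain $\pi_*N_m+(\pi_*F+\ell\Delta)\sim_\Q mM+\ell(K_X+\Delta)$ for $m\in\mathcal S$. If all $N_m$ vanish, two of them give $(m_2-m_1)M\sim_\Q 0$ (vacuous under $\nu(X,M)=1$, but in any case implies num-effectivity). Otherwise fix $m_0$ with $N_{m_0}\neq0$; adding $\tfrac{\ell}{t}L$ to both sides turns the right-hand side into a multiple of $M$:
$$\pi_*N_{m_0}+\Big(\pi_*F+\ell\Delta+\tfrac{\ell}{t}L\Big)\sim_\Q\Big(m_0+\tfrac{\ell}{t}\Big)M.$$
So a positive multiple of the nef divisor $M$ with $\nu(X,M)=1$ is $\Q$-linearly equivalent to an effective plus a pseudoeffective divisor, and num-effectivity then follows directly from the dedicated numerical-dimension-one nonvanishing theorem \cite[Theorem 6.1]{LP18}. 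It is that theorem, not a pigeonhole on the $N_m$, that carries the full weight of the hypothesis $\nu(X,M)=1$ and allows one to avoid the inductive hypotheses of Theorem \ref{thm:MMPtwist}. You should replace the Nakayama heuristic with an appeal to \cite[Theorem 6.1]{LP18}.
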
 

\begin{proof}
By repeating verbatim the proof of Theorem \ref{thm:nonvanishingForms}, we obtain that there exist a resolution $\pi\colon Y\to X$, a positive integer $\ell$, infinitely many positive integers $m$, effective Weil divisors $N_m$ on $Y$ and a pseudoeffective divisor $F$ on $Y$ such that
\begin{equation}\label{eq:repeat}
\pi_*N_m+(\pi_*F+\ell\Delta)\sim_\Q mM +\ell (K_X+\Delta).
\end{equation}
Assume that there exist positive integers $m_1<m_2$ such that $N_{m_1}=N_{m_2}=0$. Then \eqref{eq:repeat} gives
$$(m_2-m_1)M\sim_\Q0,$$
and the result follows. Therefore, there exists a positive integer $m_0$ such that $N_{m_0}\neq0$. By adding $\frac{\ell}{t}L$ to the relation \eqref{eq:repeat} for $m=m_0$, we obtain
$$\textstyle \pi_*N_{m_0}+\big(\pi_*F+\ell\Delta+\frac{\ell}{t}L\big)\sim_\Q \big(m_0+\frac{\ell}{t}\big) M.$$
We then conclude by \cite[Theorem 6.1]{LP18}.
\end{proof}

\begin{proof}[Proof of Theorem \ref{thm:G}]
Fix a rational number $t\geq0$ and set $M:=K_X+\Delta+tL$. We will show that $M$ is num-effective.

Assume first that $t>0$. Then we first claim that $\nu(X,M)=1$. Indeed, if $t\leq1$, then 
$$1=\nu\big(X,t(K_X+\Delta+L)\big)\leq\nu(X,M)\leq\nu(X,K_X+\Delta+L)=1$$
as $(1-t)(K_X+\Delta)$ and $(1-t)L$ is pseudoeffective, and similarly if $t\geq1$.

If there exist a resolution $\pi\colon Y\to X$, a positive integer $m$ such that $mM$ is Cartier, and a singular metric $h$ on $\pi^*\OO_X(mM)$ with semipositive curvature current, such that $\mathcal I(h)\neq\OO_Y$, then the result follows from \cite[Theorem 6.5]{LP18}. 

Otherwise, pick a resolution $\pi\colon Y\to X$, a positive integer $m$ such that $mM$ is Cartier, and a singular metric $h$ on $\pi^*\OO_X(mM)$ with semipositive curvature current. Then the proof is the same as that of Theorem \ref{thm:nv0}, by invoking Theorem \ref{thm:nu=1} instead of Theorem \ref{thm:nonvanishingForms}.

Finally, if $t=0$, then $\nu(X,M)\leq\nu(X,K_X+\Delta+L)=1$ since $L$ is pseudoeffective. If $\nu(X,M)=0$, then $M\equiv 0$. Otherwise, $\nu(X,M)=1$, and then $M$ is num-effective by the argument above, by setting $L=0$.
\end{proof}

\section{Maps to abelian varieties}\label{sec:abelian}

In this section we apply our methods to varieties admitting a morphism to an abelian variety. 

\begin{thm}\label{thm:overabelian}
Assume the Generalised Nonvanishing Conjecture in dimensions at most $n-1$, the Abundance Conjecture in dimensions at most $n-1$ and the termination of flips in dimensions at most $n-1$. 

Let $(X,\Delta)$ be a projective klt pair of dimension $n$ such that $K_X+\Delta$ is pseudoeffective, and let $L$ be a nef $\Q$-divisor on $X$. Assume that there exists a nonconstant morphism $\alpha\colon X\to A$ to an abelian variety $A$. Then $K_X+\Delta+tL$ is num-effective for all $t\geq0$.
\end{thm}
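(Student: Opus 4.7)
The plan is first to reduce, via Theorem \ref{thm:mainreduction}, to establishing the Weak Generalised Nonvanishing Conjecture in our setting, and then to exploit the morphism to the abelian variety. The hypotheses of Theorem \ref{thm:mainreduction} are met under our assumptions: termination of flips and Abundance in dimensions $\leq n-1$ are directly assumed; the Weak Generalised Nonvanishing Conjecture in those dimensions follows from the full Generalised Nonvanishing Conjecture in those dimensions; and the existence of minimal models in dimension $n$ follows from termination of flips in lower dimensions together with the weak Zariski decomposition for the pseudoeffective $K_X+\Delta$, via \cite{Bir12b}. Since $\alpha$ descends to any birational contraction of a $(K_X+\Delta)$-MMP (by the functoriality of the Albanese), it suffices to establish the conclusion in the Weak Generalised Nonvanishing setting: $(X,\Delta)$ is $\Q$-factorial klt, $X$ is not uniruled, $K_X+\Delta$ is nef, and $n(X, K_X+\Delta+L)=n$.

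Next, I would factor $\alpha$ through its Stein factorization $X \xrightarrow{f} Y \xrightarrow{g} A$, with $f$ a fibration and $g$ finite, and split into two cases. If $\dim Y < n$, then on a very general fiber $F$ of $f$ the divisor $(K_X+\Delta+L)|_F$ has maximal nef dimension on $F$ (by Lemma \ref{lem:nefredsum}), hence is big. Using the Generalised Nonvanishing Conjecture and the Abundance Conjecture in dimension $\dim F < n$, the divisor $K_F+\Delta|_F$ becomes semiample, so one can run a relative good-model argument over $Y$ and descend, via the canonical bundle formula, to a problem on a birational model of $Y$ which inherits a finite, hence nonconstant, map to $A$. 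Induction via the Generalised Nonvanishing Conjecture in dimension $\dim Y < n$, combined with Lemma \ref{lem:MMPnumeff}, concludes this case. If instead $\dim Y = n$, then $\alpha$ is generically finite and the pullback $\alpha^* \colon H^0(A, \Omega^1_A) \hookrightarrow H^0(X, \Omega^1_X)$ produces at least $n$ linearly independent holomorphic 1-forms on $X$; their $n$-fold wedge gives a nonzero section of $\Omega^n_X$, yielding $\kappa(X, K_X) \geq 0$. One can then invoke Theorem \ref{thm:nonvanishingForms}: the $p$-th tensor powers of these 1-forms provide nonzero sections of $(\Omega^1_Y)^{\otimes p}$ on any resolution $\pi\colon Y \to X$, and combined with sections of $\OO(m\pi^*M)$ (available for suitable $m$ via the effective representative of $K_X+\Delta$ and the nef $L$), these give the required sections, whence $\kappa(X, M) \geq 0$. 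Iterating the WGN-style MMP arguments of Section \ref{sec:reductions} with the $L$-trivial MMP from Proposition \ref{pro:contocon} upgrades this to num-effectivity of $K_X+\Delta+tL$ for all $t \geq 0$.

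The hard part will be making the descent argument in the fibration case clean: since $L$ need not be trivial on the fibers of $f$, the descent via the canonical bundle formula must be combined with a carefully chosen relative MMP (along the lines of Theorem \ref{thm:induction}) so that $L$ restricts well enough on the base. In the generically finite case, the main technical issue is verifying the hypothesis of Theorem \ref{thm:nonvanishingForms} without assuming \emph{a priori} the effectivity of $M$ itself; the approach is to bootstrap from the effectivity of $K_X+\Delta$ together with the iterative MMP arguments of Section \ref{sec:reductions}, or, alternatively, to invoke generic vanishing theorems for varieties with nonconstant maps to abelian varieties to handle the case $\chi(X, \OO_X) = 0$ where Theorem \ref{thm:nv0} does not directly apply.
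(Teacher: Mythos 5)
Your proposal diverges significantly from the paper's proof and contains genuine gaps.

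The most serious issue is the opening reduction. You propose to invoke Theorem~\ref{thm:mainreduction}, which requires the existence of minimal models of klt pairs in dimension $n$. That hypothesis is \emph{not} among the assumptions of Theorem~\ref{thm:overabelian}, and your attempt to supply it --- ``follows from termination of flips in lower dimensions together with the weak Zariski decomposition for the pseudoeffective $K_X+\Delta$, via~\cite{Bir12b}'' --- presupposes a weak Zariski decomposition for $K_X+\Delta$, which is not given and is itself a hard open problem. The paper avoids this circle entirely: it first establishes $\kappa(X,K_X+\Delta)\geq 0$ directly by restricting to a general fibre $F$ of the Stein factorisation $\beta\colon X\to Y$ of $\alpha$ (using the Generalised Nonvanishing Conjecture in dimension $\dim F < n$ plus \cite[Theorem~0.1]{CKP12}), and then lifts effectivity to $X$ via \cite[Lemma~3.5]{Hu18}. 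Once $\kappa(X,K_X+\Delta)\geq 0$ is known, \cite[Corollary~1.2]{HMX14} gives termination of $(K_X+\Delta)$-flips --- no appeal to minimal models or weak Zariski decompositions is needed. This ordering of the argument is the crux, and your proposal reverses it.

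Your generically finite case is also circular as written. To apply Theorem~\ref{thm:nonvanishingForms} one must exhibit nonzero sections of $(\Omega^1_Y)^{\otimes p}\otimes\OO_Y(m\pi^*M)$, and the $1$-forms pulled back from $A$ only populate the $(\Omega^1_Y)^{\otimes p}$ factor; you still need sections of $\OO_Y(m\pi^*M)$ itself, i.e.\ effectivity of multiples of $M=K_X+\Delta+L$, which is exactly what you are trying to prove. The nef $\Q$-divisor $L$ gives no sections. You flag this at the end, but the suggested fixes (``bootstrap'', ``generic vanishing'') are not arguments. Moreover, Theorem~\ref{thm:nonvanishingForms} requires (among other things) the existence of good models in dimensions $\leq n-1$, which is strictly stronger than the termination and Abundance hypotheses of the theorem, and Theorem~\ref{thm:nv0} additionally requires $\chi(X,\OO_X)\neq 0$, which does not hold here --- indeed, for varieties mapping nontrivially to abelian varieties one often has $\chi(X,\OO_X)=0$.

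The paper's case split is also different from yours: it is not by $\dim Y$ but by the nef dimension $n(X,K_X+\Delta+L)$ after the $L$-trivial MMP of Proposition~\ref{pro:contocon} (which one checks is relative over $Y$ since abelian varieties contain no rational curves). If the nef dimension is less than $n$, Theorem~\ref{thm:B} applies; if it equals $n$, then $(K_X+\Delta+L)|_F$ has maximal nef dimension by \cite[Lemma~2.10]{LP18a}, hence is big by \cite[Lemma~5.2]{LP18a}, so $K_X+\Delta+L$ is $\beta$-big and num-effectivity follows from \cite[Theorem~4.1]{BirChen15}. Your fibration case gestures at a canonical-bundle-formula descent but does not supply the final step of getting num-effectivity from relative bigness over the abelian variety; that is precisely what \cite{BirChen15} provides.
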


\begin{proof} 
By replacing $L$ by some multiple, we may assume that $L$ is Cartier. Consider the Stein factorisation of $\alpha$:
$$X \, {\buildrel {\beta} \over {\longrightarrow} }\, Y \, {\buildrel {\gamma} \over {\longrightarrow} } \, A.$$
Let $F$ be a general fibre of $\beta$. Then $\kappa\big(F,(K_X+\Delta)|_F\big)\geq0$ by the Generalised Nonvanishing Conjecture in dimension $\dim F$ and by \cite[Theorem 0.1]{CKP12}, hence $\kappa(X,K_X+\Delta)\geq0$ by \cite[Lemma 3.5]{Hu18}. Therefore, it suffices to prove that $K_X+\Delta+tL$ is num-effective for any $t>2n$. 

Fix any $m>2n$. By \cite[Corollary 1.2]{HMX14}, any sequence of flips of the pair $(X,\Delta)$ terminates. Therefore, by Proposition \ref{pro:contocon} there exists an $L$-trivial $(K_X+\Delta)$-MMP $\varphi\colon X\dashrightarrow Y$ such that $K_Y+\varphi_*\Delta+m\varphi_*L$ is nef. 

We claim that this MMP is, in fact, a relative MMP over $Y$ (hence over $A$). Indeed, it suffices to show this on the first step of the MMP, as the rest is analogous. Let $c_R\colon X\to Z$ be the contraction of a $(K_X+\Delta)$-negative extremal ray, and let $H$ be an ample divisor on $Y$. Then by the Cone theorem \cite[Theorem 3.7]{KM98}, $R$ is spanned by the class of some rational curve $C$ on $X$. Since abelian varieties contain no rational curves, neither does $Y$, and therefore $C$ has to be contracted by $\beta$. In particular, $\beta^*H\cdot C=0$. If $C'$ is any other curve contracted by $c_R$, then $C'$ is numerically proportional to $C$. Hence, $\beta^*H\cdot C'=0$ and so $C'$ is contracted by $\beta$. By the Rigidity lemma \cite[Lemma 1.15]{Deb01}, the morphism 
$\beta$ factors through $c_R$, which proves the claim.

By Lemma \ref{lem:MMPnumeff} it suffices to show that $K_Y+\varphi_*\Delta+m\varphi_*L$ is num-effective. Therefore, by replacing $X$ by $Y$, $\Delta$ by $\Delta_Y$ and $L$ by $mL_Y$, we may assume additionally that $K_X+\Delta+L$ is nef. We need then to show that $K_X+\Delta+L$ is num-effective.

If $n(X,K_X+\Delta+L)<n$, then $K_X+\Delta+L$ is num-effective by Theorem \ref{thm:B}. 

If $n(X,K_X+\Delta+L)=n$, then for a very general fibre $F$ of $\beta$ we have
$$n\big(F,(K_X+\Delta+L)|_F\big)=\dim F$$
by \cite[Lemma 2.10]{LP18a}. Then $(K_X+\Delta+L)|_F$ is big by \cite[Lemma 5.2]{LP18a}, and hence $K_X+\Delta+L$ is $\beta$-big. Then $K_X+\Delta+L$ is num-effective by \cite[Theorem 4.1]{BirChen15}.
\end{proof}

\bibliographystyle{amsalpha}

\bibliography{biblio}
\end{document}